\renewcommand{\leq}{\leqslant}
\renewcommand{\geq}{\geqslant}
\newcommand{\pvector}[1]{
  \begin{pmatrix}
    #1
  \end{pmatrix}} %
\newcommand{\ddirac}[1]{
  \,\boldsymbol{\delta}\!\pvector{#1}\!} %
\renewcommand{\d}{\,{\rm d}} 
\newcommand{\supp}{\operatorname{supp}}
\providecommand{\norma}[1]{\Vert #1 \Vert}
\providecommand{\ab}[1]{\vert #1\vert}
\newcommand{\R}{\mathbbm{R}} 
\newcommand{\N}{\mathbbm{N}}
\newcommand{\la}{\lambda}
\newcommand{\eps}{\varepsilon}
\newcommand{\te}{\theta}
\newcommand{\vphi}{\varphi}
\newcommand{\one}{\mathbbm 1}
\theoremstyle{plain}
\newtheorem{teo}{Theorem}[section]
\newtheorem{cor}[teo]{Corollary}
\newtheorem{lemma}[teo]{Lemma}
\newtheorem{prop}[teo]{Proposition}
\theoremstyle{definition}
\newtheorem{define}[teo]{Definition}
\newtheorem{remark}[teo]{Remark}
\numberwithin{equation}{section}
\begin{document}
\title[Extremizers for higher order Schr\"odinger equations]
{On extremizers for Strichartz estimates for higher order Schr\"odinger equations}

\author{Diogo Oliveira e Silva}
\address{
        Diogo Oliveira e Silva\\
        Hausdorff Center for Mathematics\\
        53115 Bonn, Germany}
\email{dosilva@math.uni-bonn.de}
\thanks{\it The first author was partially supported by the Hausdorff Center for Mathematics.}

 \author{Ren\'e Quilodr\'an}
\address{Ren\'e Quilodr\'an\\
        Departamento de Ciencias Exactas\\
        Universidad de Los Lagos\\
        Avenida Fuchslocher 1305, Osorno, Chile}
\email{rene.quilodran@ulagos.cl}

\subjclass[2010]{42B10}
\keywords{Fourier extension theory, extremizers, optimal constants, convolution of singular measures, concentration-compactness, Strichartz inequalities.}
\begin{abstract}
 For an appropriate class of convex functions $\phi$, we study the  Fourier extension operator on the surface 
 \mbox{$\{(y,\ab{y}^2+\phi(y)):y\in\R^2\}$}
equipped with  projection measure. 
For the corresponding extension inequality, we compute optimal constants
and prove that extremizers do not exist. The main tool is a new comparison principle for convolutions of certain singular measures that 
holds in all dimensions. Using tools of concentration-compactness flavor, we further investigate the behavior of general extremizing sequences.
Our work is directly related to the study of extremizers and optimal constants for Strichartz estimates of certain 
higher order Schr\"odinger equations.
In particular, we resolve a dichotomy from the recent literature concerning the existence of extremizers for a family of fourth order Schr\"odinger equations, and compute the corresponding operator norms exactly where only lower bounds were previously known.
\end{abstract}
\maketitle
\tableofcontents

\section{Introduction}

Recently there has been considerable interest in the study of extremizers, optimal constants, and sharp instances of various Fourier extension inequalities. 
The purpose of the present paper is three-fold.
Firstly, we establish a sharp Fourier extension inequality on certain non-compact hypersurfaces in Euclidean space.
Secondly, we use concentration-compactness tools to study the qualitative behavior of extremizing sequences for this sharp inequality.
Thirdly, we explore the link between Fourier extension inequalities and Strichartz estimates for certain higher order Schr\"odinger equations, and resolve some dichotomies concerning the existence of extremizers that have appeared in the recent literature.\\

\noindent Throughout the paper, we normalize the Fourier transform as follows:
$$\widehat{f}(y)=\int_{\R^d} f(x) e^{-i\langle x,y\rangle} \d x,$$
where $\langle\cdot,\cdot\rangle$ denotes the usual inner product in $\R^d$.
Given a sufficiently nice function $\phi:\R^d\to\R$, consider the hypersurface in $\R^{d+1}$
\begin{equation}\label{DefS}
\Sigma_\phi=\{(y,\ab{y}^2+\phi(y)):y\in\R^d\}
\end{equation}
 endowed with projection measure 
\begin{equation}\label{defprojmeas}
\sigma(y,t)=\ddirac{t-\ab{y}^2-\phi(y)}\d y\d t,
\end{equation}
which in turn is defined by requiring that the identity
\[\int_{\R^{d+1}} g(y,t)\d\sigma(y,t)=\int_{\R^d}g(y,\ab{y}^2+\phi(y))\d y\]
holds for every Schwartz function $g$.
The  Fourier extension operator for the hypersurface $\Sigma_\phi$ is defined as 
\[\widehat{f\sigma}(x,t)=\int_{\R^{d}} f(y) e^{-i\langle x, y\rangle}e^{{-}it(\ab{y}^2+\phi(y))}\d y,\;\;\;(x,t)\in\R^{d+1}.\]
Estimates for this operator  stem from the seminal works of Tomas \cite{T}, Stein \cite{S} and Strichartz \cite{St}. In particular, under certain fairly general convexity assumptions on $\phi$, the inequality
$$\norma{\widehat{f\sigma}}_{L^{2+\frac 4d}(\R^{d+1})}
\lesssim_{d,\phi} \norma{f}_{L^2(\R^d)}$$
holds in dimensions $d\geq 1$,
see e.g. \cite{JPS,JSS,KPV}.
To pursue this point further, let us specialize the discussion to the two-dimensional case $d=2$. 
Using the fact that in this case the exponent $2+\frac4 d=4$ is an
even integer together with Plancherel's Theorem, the inequality
$$\norma{\widehat{f\sigma}}_{L^4(\R^3)}\lesssim_\phi \norma{f}_{L^2(\R^2)}$$
 can be rewritten in bilinear convolution form as
\begin{equation}
 \label{nonsharp-2-convolution}
 \norma{f\sigma\ast f\sigma}_{L^2(\R^3)}\lesssim_{\phi} \norma{f}_{L^2(\R^2)}^2.
\end{equation}
We emphasize that, since the surface $\Sigma_\phi$ is not compact, inequality \eqref{nonsharp-2-convolution} does not hold in general if the projection measure is replaced by the usual surface measure on $\Sigma_\phi$. Inequality \eqref{nonsharp-2-convolution} will be established under some mild assumptions on $\phi$ in Theorem \ref{main} below. As we will see, it will follow from the fact that the convolution $\sigma\ast\sigma$ defines a measure which is absolutely continuous with respect to Lebesgue measure on $\R^3$, and whose Radon--Nikodym derivative is given by an essentially bounded function. \\

\noindent In the first part of the  paper, we address the question of existence of extremizers for the sharp version of inequality \eqref{nonsharp-2-convolution}, and compute the optimal constant. 
More precisely, consider the sharp inequality 
\begin{equation}\label{2-convolution}
 \norma{f\sigma\ast f\sigma}_{L^2(\R^3)}\leq {\mathcal R^2_\phi} \norma{f}_{L^2(\R^2)}^2,
\end{equation}
where the optimal constant is given by 
\begin{equation}
\label{bestConstantS}
\mathcal{R}_{\phi}:=\sup_{0\neq f\in L^2(\R^2)} \frac{\norma{f\sigma\ast f\sigma}^{\frac12}_{L^2(\R^3)}}{\norma{f}_{L^2(\R^2)}}.
\end{equation}
\begin{define}
An {\bf extremizing sequence} for  inequality \eqref{2-convolution} is a sequence $\{f_n\}\subset L^2(\R^2)$ satisfying $\|f_n\|_{L^2(\R^2)}\leq 1$, such that 
$$\norma{f_n\sigma\ast f_n\sigma}_{L^2(\R^3)}\to {\mathcal R^2_\phi}, \textrm{ as } n\to\infty.$$
An {\bf extremizer} for inequality \eqref{2-convolution} is a nonzero function $f\in L^2(\R^2)$ which satisfies
$$ \norma{f\sigma\ast f\sigma}_{L^2(\R^3)}= {\mathcal R^2_\phi} \norma{f}_{L^2(\R^2)}^2.$$
\end{define}

\noindent The unperturbed case $\phi=0$ was treated by Foschi \cite{F}, who proved that extremizers for the corresponding extension inequality on the paraboloid $\Sigma_0$ are given by Gaussians, and computed
$\mathcal R_0=(\frac{\pi}2)^{\frac14}$. A key step in Foschi's program was the elementary but crucial observation that the convolution of projection measure on the two-dimensional 
paraboloid defines a constant function in the interior of its support,  see  \cite[Lemma 3.2]{F}, and Remark \ref{Foschi} below.  
Alternative approaches are available:
Hundertmark--Zharnitski \cite{HZ} base their analysis on a novel representation of the Strichartz integral, and Bennett et al. \cite{BBCH} identify a monotonicity property of such integrals under a certain quadratic heat-flow. 
These proofs rely on the large symmetry group enjoyed by the paraboloid. 
Perturbed paraboloids  $\Sigma_\phi$ with $\phi\neq 0$ no longer enjoy this special feature, and 
understanding the associated Fourier extension operator in sharp form is an important step towards the understanding of general manifolds with positive Gaussian curvature. 
This motivates our first main result.

\begin{teo}\label{main}
Let $\phi:\R^2\to\R$ be a nonnegative, twice continuously differentiable, strictly convex function, whose Hessian $H(\phi)$ satisfies one of the following conditions:
\begin{itemize}
 \item [(i)] $H(\phi)(y_0)=0$ for some $y_0\in\R^2$, or
 \item [(ii)] There exists a sequence $\{y_n\}\subset \R^2$ with $\ab{y_n}\to\infty$, such that  {$H(\phi)(y_n)\to0$}, as $n\to\infty$.
\end{itemize}
Let $\sigma$ denote the projection measure on the surface $\Sigma_\phi$.
Then the inequality
\begin{equation}\label{2-convolution-2}
 \norma{f\sigma\ast f\sigma}_{L^2(\R^3)}\leq {\mathcal R^2_\phi} \norma{f}_{L^2(\R^2)}^2
 \end{equation}
holds for every $f\in L^2(\R^2)$, and is sharp with optimal constant  given by
\begin{equation}\label{ValueS}
\mathcal R_\phi
=\Big(\frac \pi 2\Big)^{\frac14}.
\end{equation}
The sequence $\{f_n/\|f_n\|_{L^2}\}$ defined via 
\begin{equation}\label{extseq}
f_n(y):= \left\{ \begin{array}{ll}
e^{-n(\psi(y)-\psi(y_0)-\langle\nabla\psi(y_0),y-y_0\rangle)}, & \textrm{in case \emph{(i)},}\\
e^{-a_n(\psi(y)-\psi(y_n)-\langle\nabla\psi(y_n),y-y_n\rangle)},& \textrm{in case \emph{(ii)},}
\end{array} \right.
\end{equation}
where $\psi:=|\cdot|^2+\phi$ and $\{a_n\}$ is an appropriately chosen sequence, is  extremizing for inequality \eqref{2-convolution-2}.
Moreover, extremizers for inequality \eqref{2-convolution-2} do not exist.
\end{teo}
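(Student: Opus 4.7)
The plan is to reduce the sharp inequality to a pointwise bound on $\sigma\ast\sigma$. On each fiber of the convolution, Cauchy--Schwarz yields
\[\ab{(f\sigma\ast f\sigma)(\xi,\tau)}^2\leq(\ab{f}^2\sigma\ast\ab{f}^2\sigma)(\xi,\tau)\cdot(\sigma\ast\sigma)(\xi,\tau),\]
and integration, together with $\int(\ab{f}^2\sigma\ast\ab{f}^2\sigma)\d\xi\d\tau=\|f\|_{L^2}^4$, gives $\mathcal R_\phi^4\leq\|\sigma\ast\sigma\|_{L^\infty(\R^3)}$. Thus the sharp constant reduces to the identification $\|\sigma\ast\sigma\|_{L^\infty}=\pi/2$, the main analytic statement to prove.

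\textbf{Step 2: comparison principle.} The substitution $y=\xi/2+z$ rewrites the convolution as
\[(\sigma\ast\sigma)(\xi,\tau)=\int_{\R^2}\delta\Bigl(\tau-\tfrac{\ab{\xi}^2}{2}-2\ab{z}^2-\Phi_\xi(z)\Bigr)\d z,\]
where $\Phi_\xi(z):=\phi(\xi/2+z)+\phi(\xi/2-z)$ is nonnegative, even, and convex in $z$, with minimum at $z=0$. Passing to polar coordinates $z=r\omega$, strict convexity of $\phi$ forces $\partial_r\Phi_\xi(r\omega)>0$ for $r>0$, so the integrand in
\[(\sigma\ast\sigma)(\xi,\tau)=\int_0^{2\pi}\frac{r_\ast(\omega)}{4r_\ast(\omega)+\partial_r\Phi_\xi(r_\ast(\omega)\omega)}\d\omega\]
is strictly less than $1/4$, giving the pointwise bound $(\sigma\ast\sigma)(\xi,\tau)<\pi/2$ on the interior of the support. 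When $\phi\equiv 0$ the bound is saturated, recovering Foschi's identity $(\sigma_0\ast\sigma_0)(\xi,\tau)=\tfrac\pi2\one_{\{\tau\geq|\xi|^2/2\}}$. This is the step I expect to be hardest: it is the new comparison principle advertised in the abstract, and Foschi's symmetry-based proof no longer applies, so convexity must do all the work.

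\textbf{Step 3: extremizing sequence.} With $\psi=|\cdot|^2+\phi$, substitute $y=y_0+u/\sqrt n$ in case (i) (or $y=y_n+u/\sqrt{a_n}$ in case (ii)). Taylor expansion together with $H(\phi)(y_0)=0$ (resp.\ $H(\phi)(y_n)\to 0$) gives
\[n\bigl(\psi(y)-\psi(y_0)-\langle\nabla\psi(y_0),y-y_0\rangle\bigr)\longrightarrow\ab u^2\]
uniformly on compact sets, provided $\{a_n\}$ is chosen so that the $C^2$-Taylor remainder of $\phi$ at $y_n$ still vanishes in the limit. After renormalization $f_n$ behaves like a concentrating Gaussian near $y_0$ (resp.\ $y_n$); translation and modulation invariance of $\|f\sigma\ast f\sigma\|_{L^2}$ absorb the linear part of the exponent, and the action of the convolution localizes near $(\xi,\tau)=(2y_0,2\psi(y_0))$, precisely where $\sigma\ast\sigma$ approaches $\pi/2$. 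The ratio $\|f_n\sigma\ast f_n\sigma\|_{L^2}^{1/2}/\|f_n\|_{L^2}$ then converges to $(\pi/2)^{1/4}$, matching the upper bound from Steps~1--2 and proving both $\mathcal R_\phi=(\pi/2)^{1/4}$ and that $\{f_n/\|f_n\|_{L^2}\}$ is extremizing.

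\textbf{Step 4: non-existence.} If $f\in L^2(\R^2)$ were an extremizer, both inequalities of Step~1 would be saturated, so $(\sigma\ast\sigma)(\xi,\tau)=\pi/2$ would hold on the support of $\ab{f}^2\sigma\ast\ab{f}^2\sigma$, a subset of $\R^3$ of positive Lebesgue measure lying in the interior of $\supp(\sigma\ast\sigma)$. This contradicts the strict upper bound from Step~2, so no extremizers exist.
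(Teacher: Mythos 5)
Your Steps 1 and 4 match the paper's argument. Step 2 is correct but takes a genuinely different route: rather than invoking the paper's general $d$-dimensional comparison principle (Theorem \ref{comparison-convolution}), proved via the ellipsoid-to-ellipsoid transformation $T$, you derive the pointwise bound directly from the two-dimensional polar formula for $\sigma\ast\sigma$. This is precisely the shortcut the paper points out in Remark \ref{Foschi} (the one following Proposition \ref{ConvolFormulaProp}): your integral formula is formula \eqref{preHessian} in disguise, and the observation that the denominator strictly exceeds $4r_\ast$ amounts to strict monotonicity of $\nabla\phi$, which is the same convexity input the paper feeds into Lemma \ref{contractive}. The paper chose the transformation route because it works in all $d\geq 2$ and, in their words, is more flexible; your route is more elementary and suffices for $d=2$.

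The gap is in Step 3. The claim that ``translation and modulation invariance of $\|f\sigma\ast f\sigma\|_{L^2}$ absorb the linear part of the exponent'' is not available: $\Sigma_\phi$ carries no translation symmetry for general $\phi$ (this is exactly the obstruction that prevents Foschi's symmetry-based proof from applying here), and, more basically, the factor $e^{n\langle\nabla\psi(y_0),\,y\rangle}$ in your $f_n$ is real-exponential rather than unimodular, so multiplying by it is not an isometry of the Strichartz functional. What makes the specific sequence \eqref{extseq} tractable is a different structural fact: the exponent $\gamma(y)=\psi(y)-\psi(y_0)-\langle\nabla\psi(y_0),y-y_0\rangle$ is the restriction to $\Sigma_\phi$ of an \emph{affine} function of $(\xi,\tau)\in\R^3$, so $\gamma(y)+\gamma(z)$ depends only on $(\xi,\tau)=(y+z,\psi(y)+\psi(z))$, and hence one gets the exact pointwise factorization $(f_n\sigma\ast f_n\sigma)^2=(f_n^2\sigma\ast f_n^2\sigma)\,(\sigma\ast\sigma)$. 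Together with the concentration of $f_n/\|f_n\|_{L^2}$ at $y_0$, the continuity of $\sigma\ast\sigma$ up to the boundary of its support, and the boundary value $(\sigma\ast\sigma)(2y_0,2\psi(y_0))=\pi/\sqrt{\det H(\psi)(y_0)}=\pi/2$ when $H(\phi)(y_0)=0$, this gives $\|f_n\sigma\ast f_n\sigma\|_{L^2}^2/\|f_n\|_{L^2}^4\to\pi/2$. Your Taylor-rescaling picture conveys the right heuristic (in the limit one should see Foschi's Gaussian on the osculating paraboloid), but converting it to a proof requires quantitative control of $\sigma\ast\sigma$ as the surface is dilated toward its tangent paraboloid, and you do not supply that; nor do you explain how the diagonal choice of $\{a_n\}$ handles case (ii), where the concentration points run off to infinity.
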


\noindent Let us briefly comment on the proof of Theorem \ref{main}. In order to compute the optimal constant $\mathcal R_\phi$ and to show that extremizers do not exist, we employ methods from \cite{OS,Q} that 
are based on 
Foschi's ideas \cite{F}, with a novel ingredient which we highlight below. The main steps are the following:
\begin{itemize}
 \item One shows that $\mathcal R_\phi^4\leq \norma{\sigma\ast\sigma}_{L^\infty}<\infty$.
 \item One exhibits an explicit sequence $\{f_n\}\subset L^2(\R^2)$ such that
 \[\liminf_{n\to\infty} \frac{\norma{f_n\sigma\ast f_n\sigma}_{L^2}^2}{\norma{f_n}_{L^2}^4}\geq \norma{\sigma\ast\sigma}_{L^\infty}.\]
 \item From the previous two steps, one concludes  $\mathcal R^4_\phi= \norma{\sigma\ast\sigma}_{L^\infty}$.
 \item One proves that the set  
 $\{(\xi,\tau)\in\R^{2+1}: (\sigma\ast\sigma)(\xi,\tau)=\norma{\sigma\ast\sigma}_{L^\infty}\}$ has Lebesgue measure zero.
 \item A careful review of Foschi's method then implies 
 \[\norma{f\sigma\ast f\sigma}_{L^2}<\mathcal R_\phi^2 \norma{f}_{L^2}^2,\]
 for every nonzero $f\in L^2(\R^2)$. In particular, extremizers do not exist.
\end{itemize}

\noindent The first and fourth steps above are based on a new comparison principle that translates into a pointwise inequality between convolution of projection measures on the perturbed surface $\Sigma_\phi$ and the  paraboloid $\Sigma_0$, respectively. It leads to the computation of the exact numerical value of the optimal constant $\mathcal R_\phi$. The comparison principle holds in all dimensions $d\geq 2$, and we state it precisely as follows.

\begin{teo}\label{comparison-convolution}
For $d\geq 2$, let $\phi:\R^d\to\R$ be a nonnegative, continuously differentiable, strictly convex function.  Let $\vphi=\ab{\cdot}^2$ and $\psi=\ab{\cdot}^2+\phi$.
Let $\sigma_0, \sigma$ denote the projection measures on the hypersurfaces $\Sigma_0, \Sigma_\phi$, respectively.
Then
\begin{equation}\label{comparisonineq}
\bigl(\sigma\ast\sigma\bigr)(\xi,2\psi({\xi}/2)+\tau)\leq 
\bigl(\sigma_0\ast\sigma_0\bigr)(\xi,2\vphi(\xi/2)+\tau),
\end{equation}
for every $\xi\in\R^d$ and $\tau>0$. Moreover, this inequality is strict for almost every point in the support of the measure $\sigma\ast\sigma$. 
\end{teo}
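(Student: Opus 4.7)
The plan is to pass to symmetric coordinates and reduce the comparison to a one-dimensional pointwise inequality along each radial direction.

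Fix $\xi \in \R^d$ and $\tau > 0$. Unfolding the definitions and making the substitution $(y_1, y_2) = (\xi/2 + \eta, \xi/2 - \eta)$, I would first record the distributional identity
\[
 (\sigma \ast \sigma)(\xi, 2\psi(\xi/2) + \tau) = \int_{\R^d} \delta\bigl(\tau - F_\xi(\eta)\bigr)\, d\eta,
\]
with
\[
 F_\xi(\eta) := \psi(\xi/2+\eta) + \psi(\xi/2-\eta) - 2\psi(\xi/2) = 2|\eta|^2 + h_\xi(\eta),
\]
and $h_\xi(\eta) := \phi(\xi/2+\eta) + \phi(\xi/2-\eta) - 2\phi(\xi/2)$. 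Strict convexity of $\phi$ gives $h_\xi \geq 0$ with $h_\xi(\eta) > 0$ for $\eta \neq 0$, and strict convexity of $\psi$ makes $F_\xi$ a strictly convex, even function with unique minimum $F_\xi(0) = 0$. The same formula with $h_\xi \equiv 0$ computes $\sigma_0 \ast \sigma_0$ at $(\xi, 2\vphi(\xi/2) + \tau)$.

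Next, I would switch to polar coordinates $\eta = r\omega$ with $r > 0$ and $\omega \in S^{d-1}$. Along each ray, $r \mapsto F_\xi(r\omega)$ is strictly convex and even, hence strictly increasing on $[0,\infty)$, so for every $\tau > 0$ it has a unique positive root $r_\omega(\tau)$. Writing the integral in polar coordinates and carrying out the radial delta then yields
\[
 (\sigma \ast \sigma)(\xi, 2\psi(\xi/2) + \tau) = \int_{S^{d-1}} \frac{r_\omega(\tau)^{d-1}}{4\,r_\omega(\tau) + h_\omega'(r_\omega(\tau))}\, d\omega,
\]
where $h_\omega(r) := h_\xi(r\omega)$ is convex on $[0,\infty)$, satisfies $h_\omega(0) = h_\omega'(0) = 0$, is positive for $r > 0$, and (by convexity combined with the previous properties) has $h_\omega'(r) > 0$ for every $r > 0$. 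For the paraboloid one has $h \equiv 0$ and $r_\omega(\tau) = s := \sqrt{\tau/2}$ independent of $\omega$, with integrand $s^{d-2}/4$.

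The problem thus reduces to the pointwise inequality
\[
 \frac{r_\omega(\tau)^{d-1}}{4\,r_\omega(\tau) + h_\omega'(r_\omega(\tau))} \leq \frac{s^{d-1}}{4s}, \qquad \omega \in S^{d-1},
\]
which, after cross-multiplying, becomes
\[
 s^{d-1}\, h_\omega'(r_\omega) \geq 4\,r_\omega s\,\bigl(r_\omega^{d-2} - s^{d-2}\bigr).
\]
From $\tau = 2 r_\omega^2 + h_\omega(r_\omega) \geq 2 r_\omega^2$ one has $r_\omega \leq s$, and since $d \geq 2$ this yields $r_\omega^{d-2} \leq s^{d-2}$. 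The left-hand side is therefore nonnegative and the right-hand side nonpositive, giving the desired inequality. Strictness follows immediately when $\tau > 0$: then $r_\omega > 0$, hence $h_\omega'(r_\omega) > 0$, so the left-hand side is strictly positive. Integrating over $S^{d-1}$ produces the strict comparison on the open set $\{\tau > 0\}$, which has full measure in the support of $\sigma \ast \sigma$.

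The only step requiring genuine care is the rigorous interpretation of the delta-function identities, which can be handled by approximating $\delta$ by smooth bumps and invoking the coarea formula together with strict monotonicity of $F_\xi$ along each ray. No other substantive obstacle is anticipated: the mechanism powering the inequality is the transparent ray-by-ray estimate in the last step, in which the extra term $h_\omega'(r)$ in the radial Jacobian exactly absorbs the deficit caused by $r_\omega \leq s$.
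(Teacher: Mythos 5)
Your proof is correct, and the mechanism is exactly right. Let me compare it to the paper's argument. The paper introduces a ray-preserving transformation $T(y)=\la(y,\xi)y$ sending the ``$\vphi$-ellipsoid'' $\{2|y|^2=c\}$ bijectively onto the ``$\psi$-ellipsoid'' $\{\psi(\xi/2+y)+\psi(\xi/2-y)-2\psi(\xi/2)=c\}$, computes its Jacobian via the Matrix Determinant Lemma, proves $|\det T'|<1$ off the origin (Lemma~\ref{contractive}), then pushes the delta integral through this change of variables and invokes H\"older's inequality, approximating the delta by an $\eps$-neighborhood of the ellipsoid to obtain strictness. Your argument works in polar coordinates from the start and compares the spherical integrands directly. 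It is in fact the \emph{same} inequality in disguise: writing $y=s\omega$ with $s=\sqrt{\tau/2}$ and $r=r_\omega(\tau)=\la(s\omega)s$, the paper's Jacobian evaluates to
\[
\det T'(s\omega)=\la^{d-1}\frac{4s^2}{4rs+s\,h_\omega'(r)}=\frac{4\,r^{d-1}}{s^{d-2}\bigl(4r+h_\omega'(r)\bigr)},
\]
which is precisely the ratio of your two integrands. What your route buys is economy: by carrying out the radial delta explicitly you never need the transformation $T$, the Matrix Determinant Lemma, or the $\eps$-neighborhood/H\"older equality-case analysis; the strict comparison is visible ray by ray and integrates to a strict inequality immediately. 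The key strictness ingredient $h_\omega'(r)>0$ for $r>0$ is correctly justified (convexity plus $h_\omega(0)=h_\omega'(0)=0$ plus $h_\omega>0$ on $(0,\infty)$ from strict convexity of $\phi$), and the observation that $\tau=2r_\omega^2+h_\omega(r_\omega)\geq 2r_\omega^2$ forces $r_\omega\leq s$ is the exact analogue of the paper's Lemma~\ref{convexity}(e) giving $\la\leq 1$, which is where the restriction $d\geq 2$ enters. The formula you derive also matches the paper's \eqref{ConvolutionFormula}--\eqref{preHessian} after unwinding the definition of $\alpha$, so your delta-calculus is consistent with theirs; the justification via coarea that you flag at the end is the same one the paper tacitly relies on.
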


\noindent Certain related but distinct comparison principles have already proved useful in understanding the effect of global smoothing. See \cite{RS} for an instance  in which such a principle was used to derive new estimates for dispersive (and non-dispersive) equations from known ones, as well as an effective means to compare estimates for different equations. The link with optimal constants and extremizers for a broad class of smoothing estimates is established in \cite{BS}. \\

\noindent In the second part of the present paper, we adapt ideas from the concentration-compactness principle of Lions \cite{Li} to examine the behavior of general extremizing sequences for inequality \eqref{2-convolution-2}. Generally speaking, the theory of concentration-compactness has proved a very efficient tool to exhibit the precise mechanisms which are responsible for the loss of compactness in a variety of settings. In our concrete problem, extremizers fail to exist because extremizing sequences concentrate. Concentration can only occur at points where the convolution $\sigma\ast\sigma$ attains its maximum value, or at spatial infinity. To make these concepts precise, we introduce the relevant definitions. 
\begin{define}
 \label{DefConcentration}
A sequence $\{f_n\}\subset L^2(\R^2)$ {\bf concentrates at a point} $y_0\in\R^2$ if, for every $\eps,\rho>0$, there exists $N\in\N$ such that, for every $n\geq N$,
\[\int_{\{\ab{y-y_0}\geq\rho\}}\ab{f_n(y)}^2 \d y< \eps\norma{f_n}_{L^2(\R^2)}^2.\]
A sequence $\{f_n\}\subset L^2(\R^2)$ {\bf concentrates along a sequence} $\{y_n\}\subset\R^2$ if, for every $\eps,\rho>0$, there exists $N\in\N$ such that, for every $n\geq N$,
\[\int_{\{\ab{y-y_n}\geq\rho\}}\ab{f_n(y)}^2 \d y< \eps\norma{f_n}_{L^2(\R^2)}^2.\]
A sequence $\{f_n\}\subset L^2(\R^2)$ {\bf concentrates at infinity} if, for every $\eps,\rho>0$, there 
exists $N\in\N$ such that, for every $n\geq N$,
\[\int_{\{\ab{y}\leq\rho\}}\ab{f_n(y)}^2 \d y< \eps\norma{f_n}_{L^2(\R^2)}^2.\]
\end{define}

\noindent The following result holds under the general hypotheses of Theorem \ref{main}.

\begin{teo}
 \label{BehaviorExtSeq}
 Let $\phi:\R^2\to\R$ be a nonnegative, twice continuously differentiable, strictly convex function, whose Hessian satisfies condition \emph{(i)} or condition \emph{(ii)} from Theorem \ref{main}.
Then any extremizing sequence for inequality \eqref{2-convolution-2}
has a further subsequence
which either concentrates at some point $y_0\in\R^2$ satisfying $H(\phi)(y_0)=0$, 
 or concentrates at infinity.
\end{teo}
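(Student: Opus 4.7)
The plan is to carry out a concentration-compactness analysis in the spirit of Lions, using Theorem \ref{comparison-convolution} as the key quantitative input. After normalizing $\norma{f_n}_{L^2}=1$, I would apply Lions' trichotomy to the probability measures $\mu_n:=|f_n|^2\d y$ on $\R^2$, obtaining a subsequence satisfying exactly one of: tightness up to translation by some sequence $\{y_n\}\subset\R^2$, vanishing, or dichotomy. Ruling out the latter two alternatives will reduce the argument to analyzing $\{y_n\}$: after a further subsequence, either $y_n\to y_0\in\R^2$ (whence I will deduce $H(\phi)(y_0)=0$) or $|y_n|\to\infty$.

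The driving identity is the Foschi-style Cauchy--Schwarz bound
\[\abs{(f_n\sigma*f_n\sigma)(z)}^2\leq(\sigma*\sigma)(z)\cdot\bigl(|f_n|^2\sigma*|f_n|^2\sigma\bigr)(z),\qquad z\in\R^3,\]
already exploited in the proof of Theorem \ref{main}. Setting $\nu_n:=|f_n|^2\sigma*|f_n|^2\sigma$, a probability measure on $\R^3$, and integrating the above, the extremizing hypothesis together with $\mathcal R_\phi^4=\norma{\sigma*\sigma}_{L^\infty}$ force the defect
\[\int_{\R^3}\bigl[\mathcal R_\phi^4-(\sigma*\sigma)(z)\bigr]\d\nu_n(z)\longrightarrow 0.\]
By Theorem \ref{comparison-convolution}, the non-negative integrand is strictly positive almost everywhere on the support of $\sigma*\sigma$. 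Combined with the explicit bottom-of-support computation
\[\lim_{\tau\to 2\psi(\xi/2)^+}(\sigma*\sigma)(\xi,\tau)=\frac{2\pi}{\sqrt{\det\bigl(4I+2H(\phi)(\xi/2)\bigr)}}\]
(which arises naturally when one identifies $\norma{\sigma*\sigma}_{L^\infty}$ in the proof of Theorem \ref{main}), this shows that $(\sigma*\sigma)(\xi,\tau)$ approaches $\mathcal R_\phi^4=\pi/2$ only as $\tau\to 2\psi(\xi/2)^+$ with $H(\phi)(\xi/2)\to 0$. Consequently $\{\nu_n\}$ must concentrate either on the locus $\{(2y,2\psi(y)):H(\phi)(y)=0\}$ or along a sequence $(\xi_n,\tau_n)$ with $|\xi_n|\to\infty$.

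To implement the trichotomy, I would rule out vanishing via a refined Strichartz estimate on $\Sigma_\phi$, adapting the arguments of Moyua--Vargas--Vega and B\'egout--Vargas, which yields $\norma{f_n\sigma*f_n\sigma}_{L^2}\to 0$ whenever $\sup_{y}\mu_n(B(y,R))\to 0$ for every fixed $R$. For the dichotomy alternative, I would write $f_n=g_n+h_n$ with $\dist(\supp g_n,\supp h_n)\to\infty$; the three convolutions $g_n\sigma*g_n\sigma$, $g_n\sigma*h_n\sigma$, $h_n\sigma*h_n\sigma$ then have asymptotically disjoint spatial supports in $\R^3$, yielding $L^2$-quasi-orthogonality, and combined with the strict subadditivity $\alpha^2+(1-\alpha)^2<1$ for $\alpha\in(0,1)$, this will contradict extremality provided the cross term $\norma{g_n\sigma*h_n\sigma}_{L^2}$ is shown to be negligible. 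In the remaining tight case, the further subsequence yields either $y_n\to y_0$, in which case $\mu_n\to\delta_{y_0}$ weakly and $\nu_n$ concentrates at the single point $(2y_0,2\psi(y_0))$, so the previous paragraph requires $H(\phi)(y_0)=0$; or $|y_n|\to\infty$, which is exactly concentration at infinity.

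The most delicate point will be controlling the cross term in the dichotomy analysis: bare separation of spatial supports does not automatically make $\norma{g_n\sigma*h_n\sigma}_{L^2}$ small, and the rigorous argument is likely to require combining the spatial quasi-orthogonality with a localized use of the defect integral above, ensuring that the mass transported by the cross term sits in a region where $\sigma*\sigma$ is uniformly bounded away from $\pi/2$.
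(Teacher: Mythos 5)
Your overall strategy---extracting a subsequence, using the Foschi Cauchy--Schwarz bound to obtain a defect functional, and invoking Theorem \ref{comparison-convolution} to conclude that extremizing mass must localize where $\sigma*\sigma$ approaches its supremum---matches the paper's reasoning. The Hessian formula for the boundary value is also correct (your normalization $\frac{2\pi}{\sqrt{\det(4I+2H(\phi))}}$ equals the paper's $\frac{\pi}{\sqrt{\det(2I+H(\phi))}}$). But you frame the proof through Lions' trichotomy as a black box, whereas the paper's proof is deliberately more hands-on: it establishes a short chain of tailored lemmas (Lemma \ref{improvement-implies-concentration}, which turns the defect into quantitative localization in $(\R^2)^2$; Lemma \ref{NoSplitting} and Lemma \ref{NoSplitting_part2}, which forbid splitting between a bounded region and an annulus or the exterior of a ball), and these are built without ever verifying Lions' alternatives. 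The paper only discusses the trichotomy after the fact, in a closing remark. The payoff of the paper's route is that it sidesteps the two places your proposal would need additional work, as described next.

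First, ruling out vanishing via a refined bilinear Strichartz estimate \`a la Moyua--Vargas--Vega/B\'egout--Vargas is unnecessary: in the paper's terminology, \emph{vanishing implies concentration at infinity}, since $\sup_y\mu_n(B(y,\rho))\to 0$ in particular forces $\mu_n(B_\rho(0))\to 0$ for every fixed $\rho$. So this alternative already falls into the allowed conclusion of the theorem and needs no separate refutation---trying to rule it out would force you to establish a substantial refined estimate that the paper never develops.

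Second, and more seriously, your dichotomy analysis has a genuine gap, which you correctly flag. Asymptotic disjointness of the spatial supports of $g_n\sigma*g_n\sigma$, $g_n\sigma*h_n\sigma$, $h_n\sigma*h_n\sigma$ gives near-orthogonality, but the cross term then appears with a coefficient $4$ (not $2$), and the trivial bilinear bound $\norma{g_n\sigma*h_n\sigma}_{L^2}\le\mathcal R_\phi^2\norma{g_n}_{L^2}\norma{h_n}_{L^2}$ gives $\norma{g_n}_{L^2}^4+4\norma{g_n}_{L^2}^2\norma{h_n}_{L^2}^2+\norma{h_n}_{L^2}^4$, which \emph{exceeds} $(\norma{g_n}_{L^2}^2+\norma{h_n}_{L^2}^2)^2$. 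Without showing the cross term is $o(1)$ this leads to no contradiction, and your suggestion of ``a localized use of the defect integral'' is not an argument. The paper closes precisely this gap with the quantitative bound of Lemma \ref{WeakInteraction}: the convolution formula \eqref{ConvFandG} yields the geometric estimate $\norma{\one_{B_r(y_0)}\sigma*\one_{B_\rho^\complement(y_0)}\sigma}_{L^\infty}\le\tfrac12\arcsin\bigl(\tfrac{2r}{\rho-r}\bigr)$, uniformly in $y_0$, which under Cauchy--Schwarz forces $\norma{g_n\sigma*h_n\sigma}_{L^2}^2\le\rho(r,r_n)^2\norma{g_n}_{L^2}^2\norma{h_n}_{L^2}^2$ with $\rho(r,r_n)\to 0$. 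This is exactly the ingredient you would need, and you should derive it (or something equivalent) before the dichotomy step can be completed.

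Finally, once you reach the tight case, one small caveat: obtaining a single concentration point $y_0\in E=\{H(\phi)=0\}$ from the defect integral is not automatic from weak-$*$ convergence of $\mu_n$; the paper handles it with a diagonal/compactness argument on $E\cap\bar B_{3r_0}$ built from the localization provided by Lemma \ref{improvement-implies-concentration}. That step is routine, but it should be made explicit rather than asserted.
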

\vspace{.2cm}

\noindent It has long been understood that Tomas--Stein extension type inequalities are related to Strichartz estimates for linear partial differential equations of dispersive type.
To illustrate this connection in the present situation, consider the 
multiplier operator 
$$\widehat{M_\phi g}=\phi\, \widehat g$$
acting on Schwartz functions $g$, and the associated Schr\"odinger equation
\begin{equation}\label{higher_schodinger_eq}
   \begin{cases}
   i u_t+M_\phi u-\mu\Delta u=0,\quad \mu\geq 0,\\
     u(\cdot,0)=f\in L^2(\R^d),
  \end{cases} 
\end{equation}
whose solution can be written  as
\begin{equation}
 \label{solution-u}
 u(x,t)={\frac{1}{(2\pi)^d}}\int_{\R^d} \widehat f(y) e^{i\langle x, y\rangle}e^{it(\mu\ab{y}^2+\phi(y))}\d y,\;\;\;(x,t)\in\R^{d+1}.
\end{equation}
In the third part of the paper, we consider Strichartz inequalities for solutions of  equation
\eqref{higher_schodinger_eq} in the two-dimensional case $d=2$. In particular, we investigate the family of inequalities
\begin{equation}\label{Strichartz}
\norma{(\mu+\ab{\nabla}^2)^{\frac 14}e^{it(\phi(\ab{\nabla})-\mu\Delta)}f}_{L^4(\R^3)} 
\lesssim_{\mu,\phi}
\norma{f}_{L^2(\R^2)},
\end{equation}
and mostly focus on the particular instance of a quartic perturbation, $\phi=|\cdot|^4$. 
In this case, inequality \eqref{Strichartz} can be proved via the method of stationary phase together with the main theorem of \cite{KT}, see the remarks preceding \cite[Proposition 2.4]{JSS}, and \cite{BKS, KPV, Pa} for further details.
In the spirit of what had been done in the one-dimensional setting in \cite{JPS}, this instance of inequality \eqref{Strichartz} was refined in \cite{JSS}, with the goal of establishing a linear profile decomposition for a family of fourth order Schr\"odinger equations.
As a consequence, the authors of \cite{JSS} obtained a dichotomy result for the existence of extremizers in the cases $\mu\in\{0,1\}$, which by scaling extends to the general case $\mu\geq 0$, and can be summarized as follows: 
Either extremizers exist, or extremizing sequences exhibit a certain classical Schr\"odinger behavior.
See \cite[Theorems 4.1 and 4.2]{JSS} for a precise formulation of these results.
Along the way, the authors of \cite{JSS} obtained lower bounds for the norms of the corresponding Fourier extension operators.
 The methods we use to study the sharp bilinear convolution inequality \eqref{2-convolution-2} are robust enough to resolve this dichotomy, and to determine which situation actually happens.
In particular, we prove that
 extremizers exist if $\mu=0$, but fail to exist if $\mu=1$.
In the latter case, we also compute the operator norm exactly. 

\noindent To state our results precisely, let us start by considering the case of $\mu=0$ and $\phi=|\cdot|^4$.
Then inequality \eqref{Strichartz} can be restated with the help of the Fourier transform, here denoted by $\mathcal{F}$, as
\begin{equation}
\label{hom-Sobolev--Strichartz}
\norma{\mathcal{F}(f\ab{\cdot}^{\frac12}\nu)}_{L^4(\R^3)}\lesssim\norma{f}_{L^2(\R^2)},
\end{equation}
where the measure $\nu$ is given by $\nu(y,t)=\ddirac{t-\ab{y}^4} \d y\d t$.
By Plancherel's Theorem, inequality \eqref{hom-Sobolev--Strichartz} can be rewritten in sharp form as
\begin{equation}
\label{hom_conv_Sobolev_Strichartz_Intro}
\norma{f{|\cdot|^{\frac 12}}\nu\ast f{|\cdot|^{\frac 12}}\nu}_{L^2(\R^3)}\leq \mathcal Q^2\norma{f}_{L^2(\R^2)}^2, 
\end{equation}
with optimal constant $\mathcal Q$. The following result  should be compared to \mbox{\cite[Theorem 4.1]{JSS}.}

\begin{teo}
	\label{extPureQuarticIntro}
 The optimal constant for inequality \eqref{hom_conv_Sobolev_Strichartz_Intro} satisfies the bounds
	\begin{equation}\label{sharpBoundsforQ}
	\frac{\pi}{4\sqrt{3}}< \mathcal Q^4<\frac{\pi}{4}.
	\end{equation}
	Moreover, there exists an extremizer for inequality \eqref{hom_conv_Sobolev_Strichartz_Intro}.
\end{teo}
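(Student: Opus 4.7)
The plan is to adapt the three-step scheme used for Theorem~\ref{main} to the quartic surface $\{(y,\ab{y}^4):y\in\R^2\}$ equipped with the weighted measure $|\cdot|^{\frac12}\nu$, and to combine the resulting numerical bounds with the linear profile decomposition of \cite{JSS} to settle existence.

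For the upper bound $\mathcal Q^4<\pi/4$, I would apply Cauchy--Schwarz \`a la Foschi to the pointwise expression
\[
(f|\cdot|^{\frac12}\nu\ast f|\cdot|^{\frac12}\nu)(\xi,\tau)
=\int f(y)f(\xi-y)\bigl[|y|^{\frac12}|\xi-y|^{\frac12}\bigr]\,\delta(\tau-|y|^4-|\xi-y|^4)\d y,
\]
absorbing the weight onto one factor. After integration in $(\xi,\tau)$ by Fubini this yields
$\mathcal Q^4\leq\|(|\cdot|\nu)\ast(|\cdot|\nu)\|_{L^\infty(\R^3)}$,
and the right-hand side is computed directly: setting $\xi=0$ reduces the integral to a one-variable radial computation giving $\pi/4$ for every $\tau>0$, and a polar-coordinate estimate (in the spirit of the proof of Theorem~\ref{comparison-convolution}) shows this is the global maximum. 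The strict inequality then follows from the Cauchy--Schwarz equality case, which would force $|f(y)|$ to grow as $|y|^{1/2}$, precluding $f\in L^2(\R^2)$.

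For the lower bound $\mathcal Q^4>\pi/(4\sqrt 3)$, I would linearise the quartic phase at a base point $y_0\neq 0$. The Hessian of $|\cdot|^4$ at $y_0$ has eigenvalues $12|y_0|^2$ (radial) and $4|y_0|^2$ (tangential), so locally the bilinear convolution on $|\cdot|^{\frac12}\nu$ is modelled by an anisotropic paraboloid with curvatures $6|y_0|^2$ and $2|y_0|^2$, carrying the weight $|y_0|$. A short paraboloid computation yields the local sharp constant $\pi/(4\sqrt 3)$, \emph{independent of $y_0$}---a cancellation specific to the exponent $\tfrac12$ in~\eqref{hom-Sobolev--Strichartz}. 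To upgrade $\mathcal Q^4\geq\pi/(4\sqrt 3)$ to a strict inequality I would take a two-bump trial function $f=f_++f_-$ with anisotropic Gaussians $f_\pm$ matched to the local Hessian and concentrated near $\pm y_0$; the diagonal pieces live near $\xi\approx\pm 2y_0$ and the cross piece near $\xi\approx 0$, so the three contributions are $L^2$-orthogonal, and their sum produces a ratio approaching $3\pi/(8\sqrt 3)>\pi/(4\sqrt 3)$.

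For existence, I would appeal to the linear profile decomposition of \cite[Theorem~4.1]{JSS}: modulo the scaling, rotation, and modulation symmetries of the pure quartic equation, any extremising sequence for~\eqref{hom_conv_Sobolev_Strichartz_Intro} decomposes into profiles that are either genuine quartic profiles with a nonzero weak $L^2$-limit, or classical Schr\"odinger profiles captured by a local paraboloid approximation at some $y_0\neq 0$. The strict inequality $\mathcal Q^4>\pi/(4\sqrt 3)$ established above, together with the local constant $\pi/(4\sqrt 3)$ governing the classical alternative, rules out the latter; the dominant profile is therefore genuinely quartic, and its weak limit is a nonzero extremiser. The principal obstacle is this final step: one must rigorously verify that every classical Schr\"odinger profile, regardless of base point $y_0$, contributes at most $\pi/(4\sqrt 3)$, and that interaction terms between profiles concentrated at distinct base points cannot enhance this bound---both facts hinging on the $|y_0|$-independence of the local constant and on the $L^2$-orthogonality of the contributions coming from distinct base points.
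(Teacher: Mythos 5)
Your upper bound $\mathcal Q^4\leq \pi/4$ follows, as in the paper, from the boundedness of $|\cdot|\nu\ast|\cdot|\nu$ together with Cauchy--Schwarz; the value $\pi/4$ on the vertical axis is exactly part (f) of Proposition~\ref{convPowers}. However, your argument for \emph{strictness} of this upper bound is incorrect as stated: the pointwise Cauchy--Schwarz equality condition does not force $|f(y)|$ to grow like $|y|^{1/2}$. Equality on the fibers only forces $|f(y)|/|y|^{1/2}\cdot |f(\xi-y)|/|\xi-y|^{1/2}$ to be constant on each fiber, and this functional equation has $L^2$ solutions (e.g.\ $|f(y)|=|y|^{1/2}e^{\langle a,y\rangle-b|y|^4}$ with $b>0$). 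What actually rules out $\mathcal Q^4=\pi/4$ is the \emph{second} step: equality in the $L^\infty$--$L^1$ estimate would force $(|\cdot|\nu\ast|\cdot|\nu)(\xi,\tau)=\pi/4$ a.e.\ on the support of $|f|^2\nu\ast|f|^2\nu$, which has positive measure, whereas the supremum $\pi/4$ is attained only on the measure-zero vertical axis, cf.\ \eqref{StrictOutsideVerticalAxis}. The paper organizes this more simply: it proves existence first, and then combines the existence of an extremizer with the strict a.e.\ pointwise inequality to conclude $\mathcal Q^4<\pi/4$.

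The more serious gap is in your lower bound. Your local Hessian computation correctly yields $\pi/(4\sqrt 3)$ for the diagonal pieces (in agreement with part (e) of Proposition~\ref{convPowers}), and this gives $\mathcal Q^4\geq\pi/(4\sqrt 3)$ by single-bump concentration; but the strict inequality is the whole point. Your two-bump calculation tacitly assumes the cross-term constant $\lim\,\norma{f_+\sqrt{w}\nu\ast f_-\sqrt{w}\nu}_{L^2}^2/(\norma{f_+}_{L^2}^2\norma{f_-}_{L^2}^2)$ also equals $\pi/(4\sqrt3)$, which is what produces the arithmetic $3\pi/(8\sqrt 3)$. This is unjustified: the surface at $y_0$ and at $-y_0$ carries opposite gradients $\nabla\Psi(\pm y_0)=\pm4|y_0|^2 y_0$, so in the common local paraboloid model the two caps have different Galilean boosts, the wave packets spread apart, and there is a genuine loss in the bilinear term. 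If one denotes the true cross constant by $c_\times$ and optimizes the relative bump heights, the two-bump trial gives $\pi/(4\sqrt3)+2\alpha^2\beta^2\bigl(2c_\times-\pi/(4\sqrt3)\bigr)$, which is an \emph{improvement} over the boundary value only if $c_\times>\pi/(8\sqrt3)$ --- a fact you have not established and which does not obviously hold. The paper sidesteps this completely: Lemma~\ref{lowerBoundExp} and Corollary~\ref{lowerBoundPurePowers} evaluate the explicit trial function $f(y)=e^{-|y|^4}|y|^{1/2}$ and yield the concrete value $\tfrac{\sqrt{2\pi}}{8}\Gamma(\tfrac34)^2\approx 0.4705>\pi/(4\sqrt3)\approx 0.4534$, with no bilinear interaction to estimate.

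For existence, you arrive at the same conclusion as the paper via \cite[Theorem 4.1]{JSS}, but you needn't reprove the profile-decomposition machinery yourself; that theorem already delivers the dichotomy --- either an extremizer exists, or $\mathcal Q^4=\pi/(4\sqrt3)$ --- so the strict lower bound alone closes the argument.
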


\noindent Still taking $\phi=|\cdot|^4$, let us now consider the case of $\mu=1$. 
Then inequality \eqref{Strichartz} can be restated as
\begin{equation}\label{quartic0_Intro}
\norma{\mathcal{F}(f(1+\ab{\cdot}^2)^{\frac14}\sigma)}_{L^4(\R^3)}\lesssim\norma{f}_{L^2(\R^2)}, 
\end{equation}
where the measure $\sigma$ is given by $\sigma(y,t)=\ddirac{t-\ab{y}^2-\ab{y}^4} \d y\d t$.
By Plancherel's Theorem, inequality \eqref{quartic0_Intro} can be rewritten in sharp form as
\begin{align}\label{quartic_Intro}
\norma{f\sqrt{w}\sigma\ast f\sqrt{w}\sigma}_{L^2(\R^3)}&\leq 
\mathcal{S}^2\norma{f}_{L^2(\R^2)}^2,
\end{align}
with weight  $w=(1+\ab{\cdot}^2)^{\frac 12}$ and optimal constant $\mathcal{S}$. 
The following result  is a special case of Theorem \ref{bestConstantQuartic} below.

\begin{teo} \label{bestConstantQuarticIntro}
	The value of the optimal constant for inequality  \eqref{quartic_Intro} is given by
	$\mathcal{S}^4=\frac{\pi}{2}.$ 
	 Moreover, extremizers for inequality \eqref{quartic_Intro} do not exist, and extremizing sequences concentrate at the origin. 
\end{teo}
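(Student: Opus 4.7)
The proof mirrors the five-step Foschi scheme outlined after Theorem \ref{main}, adapted to the presence of the weight $\sqrt{w}$. First, a pointwise Cauchy--Schwarz argument yields
\[\ab{(f\sqrt{w}\sigma\ast f\sqrt{w}\sigma)(\xi,\tau)}^2\leq (w\sigma\ast w\sigma)(\xi,\tau)\cdot(\ab{f}^2\sigma\ast\ab{f}^2\sigma)(\xi,\tau),\]
which upon integrating in $(\xi,\tau)$ and invoking $\int(\ab{f}^2\sigma\ast\ab{f}^2\sigma)=\norma{f}_{L^2}^4$ reduces the sharp upper bound $\mathcal{S}^4\leq \pi/2$ to the claim $\norma{w\sigma\ast w\sigma}_{L^\infty(\R^3)}\leq \pi/2$. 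I would verify this $L^\infty$ bound by substituting $z=y-\xi/2$ and using polar coordinates in $z$: at $\xi=0$ an explicit calculation yields $(w\sigma\ast w\sigma)(0,\tau)=\pi/4+\pi/(4\sqrt{1+2\tau})$, whose supremum over $\tau>0$ equals $\pi/2$ and is attained only in the limit $\tau\to 0^+$. For $\xi\neq 0$ the corresponding boundary value $(4+\ab{\xi}^2)\pi/\bigl(4\sqrt{(2+\ab{\xi}^2)(2+3\ab{\xi}^2)}\bigr)$ is strictly smaller than $\pi/2$, and the interior values in $\tau$ are controlled by a monotonicity analysis of the radial integral, invoking the AM--GM inequality $w(y_1)w(y_2)\leq 1+(\ab{y_1}^2+\ab{y_2}^2)/2$ together with the constraint $\psi(y_1)+\psi(y_2)=\tau$. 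These estimates simultaneously show that $\{(\xi,\tau):(w\sigma\ast w\sigma)(\xi,\tau)=\pi/2\}$ has Lebesgue measure zero.

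The matching lower bound is provided by the concentrating sequence $f_n(y)=e^{-n\ab{y}^2}$. Since $w(0)=1$ and $\psi$ osculates $\vphi=\ab{\cdot}^2$ to second order at the origin, the ratio $\norma{f_n\sqrt{w}\sigma\ast f_n\sqrt{w}\sigma}_{L^2}^2/\norma{f_n}_{L^2}^4$ is asymptotically equivalent to Foschi's value $\pi/2$ for the standard paraboloid. This forces $\mathcal{S}^4=\pi/2$. Non-existence of extremizers then follows from strict Cauchy--Schwarz: for any nonzero $f\in L^2(\R^2)$, the measure $(\ab{f}^2\sigma\ast\ab{f}^2\sigma)$ places positive mass on the complement of the measure-zero set where $w\sigma\ast w\sigma$ equals $\pi/2$, and consequently $\norma{f\sqrt{w}\sigma\ast f\sqrt{w}\sigma}_{L^2}^2<(\pi/2)\norma{f}_{L^2}^4$.

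Finally, concentration at the origin follows from the dichotomy of Theorem \ref{BehaviorExtSeq}. Since $H(\phi)(y)=4\ab{y}^2 I+8yy^T$ vanishes only at $y_0=0$, every extremizing sequence must concentrate at $0$ or at infinity. The latter alternative is ruled out by a local rescaling at a point $y_n$ with $\ab{y_n}\to\infty$: the effective weighted sharp constant at $y_n$ is given by $(1+\ab{y_n}^2)\pi/\sqrt{\det H(\psi)(y_n)}$, which tends to $\pi/(4\sqrt{3})<\pi/2$ as $\ab{y_n}\to\infty$, inconsistent with the extremizing ratio converging to $\mathcal{S}^4=\pi/2$. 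The hardest step of the proof is the uniform $L^\infty$ bound $\norma{w\sigma\ast w\sigma}_{L^\infty}\leq \pi/2$: because the weight $\sqrt{w}$ is not intrinsic to the surface $\Sigma_\phi$, the comparison principle of Theorem \ref{comparison-convolution} does not apply verbatim, so the bound must be extracted by combining the explicit radial/angular representation of $w\sigma\ast w\sigma$ with the geometric constraint coming from the level set $\{\psi(y_1)+\psi(y_2)=\tau\}$.
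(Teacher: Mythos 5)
Your upper-bound argument is sound and is, in substance, the same calculation as the paper's. With $u = \ab{\xi/2}^2 + \alpha^2$ and $v = \langle\xi,\omega\rangle^2$, the numerator of the integrand in the angular representation \eqref{weightedQuartic} equals $\sqrt{(1+u)^2-\alpha^2 v}$ \emph{exactly} (not merely up to AM--GM), and the denominator equals $4(1+2u+v)$; the bound by $\tfrac14$ then follows from $\sqrt{(1+u)^2 - \alpha^2 v}\leq 1+u\leq 1+2u+v$, which is precisely what the paper's inequality \eqref{basicineq} encodes. For the lower bound, the paper avoids any asymptotic analysis of concentrating sequences: since $w\geq 1$ the trivial estimate $\norma{f\sigma\ast f\sigma}_{L^2}\leq\norma{\ab{f}\sqrt{w}\sigma\ast\ab{f}\sqrt{w}\sigma}_{L^2}$ together with Theorem~\ref{main} gives $\mathcal S^4\geq\mathcal R_\phi^4 = \tfrac{\pi}{2}$ immediately. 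Your route via $f_n(y)=e^{-n\ab{y}^2}$ is a valid alternative, but it requires a weighted version of Lemma~\ref{lemma-concentration-point}; the trivial-estimate argument is cleaner.

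The one genuine gap is in the concentration step. Theorem~\ref{BehaviorExtSeq} is proved for extremizing sequences of the \emph{unweighted} inequality \eqref{2-convolution-2}; it does not apply as stated to \eqref{quartic_Intro}, where the extrinsic weight $\sqrt{w}$ is present, so you cannot invoke its dichotomy directly. What is needed are weighted analogues of Lemmas~\ref{upper-bound-concentration} and \ref{improvement-implies-concentration} (and the subsequent machinery of \S\ref{sec:BehaviorExtSeq}), with $\sigma\ast\sigma$ replaced by $w\sigma\ast w\sigma$ throughout. The relevant fact, which your own computations establish, is that $w\sigma\ast w\sigma$ is strictly below $\tfrac{\pi}{2}$ on its support away from the origin and tends to $\tfrac{\pi}{4}$ as $\tau\to\infty$; the weighted version of Lemma~\ref{improvement-implies-concentration} then forces all the $L^2$ mass of an extremizing sequence into shrinking balls around $0$, with no competing ``infinity'' alternative to rule out separately. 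Your supplementary ``local rescaling'' bound via the boundary values $\pi w^2(y_n)/\sqrt{\det H(\psi)(y_n)}\to\pi/(4\sqrt{3})$ is correct and relevant, but by itself it does not exclude a sequence that loses mass to infinity while spreading out rather than concentrating along any single sequence $\{y_n\}$ --- that step still requires the weighted concentration-compactness input.
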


\noindent In particular, Theorems \ref{extPureQuarticIntro} and \ref{bestConstantQuarticIntro} imply that $\mathcal{Q}^4<\mathcal{S}^4=\frac{\pi}2$. The  shape of (the Fourier transform of) a general extremizing sequence for inequality \eqref{quartic_Intro} 
is then given by \cite[Theorem 4.2]{JSS} and the remarks following it.
Furthermore, as mentioned in \cite{JSS}, it is of interest to extend the analysis to more general perturbations of the Schr\"odinger equation.
Our methods allow to make progress in a number of previously untreated cases, and we comment on this in Remark \ref{remark64} and \S \ref{sec:OtherPP} below.
\\

\noindent Our results complement the recent, vast and very interesting body of work concerning sharp Fourier extension and Strichartz estimates. In addition to the works previously cited in this introduction, see \cite{COS, CS, CS2, F2, FLS} for results in sharp Fourier extension theory on spheres, and  \cite{C, FVV, FVV2, Han, OS2, R} for other instances.
\\

\noindent{\bf Overview.}  The paper is organized as follows. In Chapter \ref{sec:ScalingConvol}, we 
briefly comment on the model case of a pure power perturbation of the paraboloid, and derive a useful integral formula for the convolution of 
projection measure on a generic convex perturbation of the two-dimensional paraboloid. Chapter \ref{sec:Comparison} is the technical heart of the 
first part of the paper, and is devoted to the aforementioned comparison principle. In particular, we prove Theorem \ref{comparison-convolution}, and 
briefly remark on possible extensions of this result to $n$-fold convolutions if $n\geq 3$. The proof of Theorem \ref{main} is presented in Chapter 
\ref{sec:NoExt}. We discuss the behavior of general extremizing sequences in Chapter \ref{sec:BehaviorExtSeq}. In particular, we establish a 
precise form of the geometric principle that distant caps interact weakly, show some auxiliary results of concentration-compactness flavor, and prove 
Theorem \ref{BehaviorExtSeq}.
Finally, we deal with sharp Strichartz inequalities in Chapter \ref{sec:Strichartz}.
We treat the case of quartic perturbations in \S \ref{sec:QuarticPert}, establishing a generalization of Theorem \ref{bestConstantQuarticIntro}. We study the convolution of projection measure associated to pure powers in \S \ref{sec:ConvPP}, and use this knowledge to tackle the case of the pure quartic in \S \ref{sec:PureQuartic}, establishing Theorem \ref{extPureQuarticIntro}, and of other pure powers in \S \ref{sec:OtherPP}.  
\\

\noindent{\bf A word on forthcoming notation.} 
The usual inner product between vectors $x,y\in\R^d$ will continue to be denoted by $\langle x,y\rangle$. This is to clarify the distinction from the $d\times d$ matrix obtained as the matrix product between the vector $x$ and the transpose of the vector $y$, denoted $x\cdot y^T$.
The usual matrix product between a $d\times d$ matrix $A$ and a vector $x\in \R^d$ will likewise be indicated by $A\cdot x$. The $d\times d$ identity matrix will be denoted by $I_d$, or simply by $I$ if no confusion arises. The open ball of radius $r>0$ centered at $x\in \R^d$  will be denoted by $B_r(x)$. If $x=0$, then we will simply write $B_r$ instead of $B_r(0)$. The corresponding closed balls will be denoted by $\bar B_r(x)$ and $\bar B_r=\bar B_r(0)$, respectively.  
The alternative notation for the Fourier transform $\mathcal{F}(f)=\widehat{f}$ will occasionally be used.
Finally, $\mathbbm{1}_E$ will stand for the indicator function of a given subset $E\subset\R^d$, and the complement of $E$ will at times be denoted by $E^\complement$.

\section{On scaling and convolutions}\label{sec:ScalingConvol}

\subsection{An explicit example}
For $d\geq 1$, $a> 0$ and $p>2$, consider the family of Fourier extension operators associated to certain polynomial perturbations of the paraboloid equipped with projection measure, given by
\begin{equation}\label{DefTa}
T_a(f)(x,t)=\int_{\R^d} f(y) e^{-i\langle x, y\rangle}e^{-it(\ab{y}^2+a\ab{y}^p)} \d y,\;\;\;(x,t)\in\R^{d+1}.
\end{equation}
The family $\{T_a\}$ enjoys the following scaling property. Given $a,b>0$, let 
$\la=(\frac ba)^{\frac1{p-2}}.$
Changing variables $y\rightsquigarrow\la y$ in the 
integral \eqref{DefTa}, we see that 
\begin{align*}
 T_a(f)(x,t)
 &=\la^{\frac d2}\int_{\R^d} f_\la( y) e^{-i\langle\la x, y\rangle}e^{-i\la^2t(\ab{y}^2+b\ab{y}^p)}\d y
 =\lambda^{\frac d2}T_b (f_\lambda)(\lambda x,\lambda^2 t),
\end{align*}
where the rescaled function $f_\la(y)=\la^{\frac d2}f(\la y)$ satisfies $\norma{f_\la}_{L^2}=\norma{f}_{L^2}$. It follows that 
$$\norma{T_a(f)}_{L^{2+\frac 4d}(\R^{d+1})}=\norma{T_b(f_\la)}_{L^{2+\frac 4d}(\R^{d+1})},$$ 
and therefore
\[\sup_{0\neq f\in L^2(\R^d)}\frac{\norma{T_a(f)}_{L^{2+\frac 4d}(\R^{d+1})}}{\norma{f}_{L^2(\R^d)}}=\sup_{0\neq f\in 
L^2(\R^d)}\frac{\norma{T_b(f)}_{L^{2+\frac4d}(\R^{d+1})}}{\norma{f}_{L^2(\R^d)}}.\]
From this identity, we conclude that optimal constants are independent of $a$, and that extremizers exist for some value of $a>0$ if and only if they exist for every value of $a>0$. If extremizers exist for one value of $a>0$, then the simple dilation indicated above produces an extremizer for any other value of $a>0$. 
Theorem \ref{main} provides a refinement of this rudimentary analysis in the two-dimensional case. In particular, it states that the optimal constant is also independent of $p$, and that extremizers \mbox{do not exist}.

\subsection{Convolution of singular measures}\label{sec:Formulas}
The goal of this section is to exhibit an explicit formula for the convolution of
 projection measure on perturbed paraboloids.
For the sake of concreteness, we limit our discussion to the two dimensional case $d=2$.
See \cite[Lemma 3.1]{BM} for a formula in the same spirit of identity \eqref{ConvolutionFormula} below. 

\begin{prop}\label{ConvolFormulaProp}
 Let $\psi:=|\cdot|^2+\phi$, where $\phi\geq 0$ is a convex $C^2(\R^2)$ function. Let $\sigma$ denote projection measure on the surface $\Sigma_\phi$. Then the following assertions hold for the convolution measure $\sigma\ast\sigma$:
 \begin{itemize}
 \item[(a)] It is absolutely continuous with respect to Lebesgue measure on $\R^3$.
 \item[(b)] Its support is given by
 $$\supp(\sigma\ast\sigma)=\{(\xi,\tau)\in\R^{2+1}: \tau\geq 2\psi(\xi/2)\}.$$
 \item[(c)] Its Radon--Nikodym derivative, also denoted by $\sigma\ast\sigma$, is given by the formula
\begin{equation}\label{ConvolutionFormula}
(\sigma\ast\sigma)(\xi,\tau)
=
\int_{\mathbb{S}^1}
\Big(\int_{-1}^1
\langle \omega, 
H(\psi)(\xi/ 2+t\alpha(\xi,\tau, \omega)\omega)
\cdot \omega\rangle\;
\d t\Big)^{-1}
\d\mu_\omega,
\end{equation}
provided $\tau>2\psi(\xi/2)$. 
Here, the measure $\mu$ denotes arc length measure on the unit circle $\mathbb{S}^1$, and the function $\alpha$ is given by 
\begin{equation}\label{alphaDef}
\alpha(\xi,\tau, \omega)={\sqrt{\tau/2-\psi(\xi/2)}}\la\Big({\sqrt{\tau/2-\psi(\xi/2)}}\omega\Big),
\end{equation}
where the function $\la$ is implicitly defined via identity \eqref{implicitlambda} below.
 \item[(d)] The convolution $\sigma\ast\sigma$ defines a continuous function of the variables $\xi,\tau$ in the interior of its support. It extends continuously to the boundary of the support, with values given by
\begin{align}\label{bdryvaluesDet}
(\sigma\ast\sigma)(\xi,2\psi(\xi/2))
={\frac{\pi}{\sqrt{\det(H(\psi)(\xi/2))}}}.
 \end{align}
\end{itemize} 
\end{prop}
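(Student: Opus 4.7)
\noindent The plan is to compute $\sigma\ast\sigma$ by unfolding its definition as a bilinear functional against test functions, then performing a symmetric change of variables and passing to polar coordinates to expose the Radon--Nikodym density.

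\noindent First, I would use \eqref{defprojmeas} twice to write, for every Schwartz $g$,
\[
\int g\d(\sigma\ast\sigma)=\iint_{\R^2\times\R^2} g(y_1+y_2,\psi(y_1)+\psi(y_2))\d y_1\d y_2,
\]
and perform the unit-Jacobian substitution $(y_1,y_2)=(\xi/2+z,\xi/2-z)$, which recasts the right-hand side as
\[
\int_{\R^2}\int_{\R^2} g(\xi,F_\xi(z))\d z\d\xi,\qquad F_\xi(z):=\psi(\xi/2+z)+\psi(\xi/2-z).
\]
Since $\psi=\ab{\cdot}^2+\phi$ satisfies $H(\psi)\geq 2I$, the slice $F_\xi$ is strictly convex, attains its unique minimum $2\psi(\xi/2)$ at $z=0$, and is strictly increasing along every ray emanating from $z=0$. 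This already establishes the support description in (b).

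\noindent Next I would pass to polar coordinates $z=r\omega$, so that $\d z=r\d r\d\mu_\omega$. For fixed $(\xi,\omega)$ the equation $F_\xi(r\omega)=\tau$ has a unique solution $r>0$ for every $\tau>2\psi(\xi/2)$, and this is exactly the function $\alpha(\xi,\tau,\omega)$ of \eqref{alphaDef}; the factorization through $\sqrt{\tau/2-\psi(\xi/2)}$ merely separates the parabolic scale from a perturbative correction $\la$. The fundamental theorem of calculus yields
\[
\frac{\partial F_\xi(r\omega)}{\partial r}=\langle\omega,\nabla\psi(\xi/2+r\omega)-\nabla\psi(\xi/2-r\omega)\rangle=r\int_{-1}^1\langle\omega,H(\psi)(\xi/2+tr\omega)\cdot\omega\rangle\d t,
\]
so changing variables from $r$ to $\tau$ absorbs the polar weight $r$ and, after Fubini, gives both the formula \eqref{ConvolutionFormula} in (c) and the absolute continuity claim (a) in one stroke.

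\noindent For (d), continuity of $\sigma\ast\sigma$ on the interior of the support follows from the implicit function theorem applied to $G_{\xi,\omega}(r):=F_\xi(r\omega)$---whose $r$-derivative is bounded below by $4r$ thanks to $H(\psi)\geq 2I$---so $\alpha$ is continuous in $(\xi,\tau,\omega)$, while the $t$-integral in the denominator of \eqref{ConvolutionFormula} is bounded below by $4$ uniformly. To compute the boundary value I would send $\tau\to 2\psi(\xi/2)^+$: then $\alpha\to 0$ uniformly in $\omega$, the $t$-integral converges to $2\langle\omega,H(\psi)(\xi/2)\cdot\omega\rangle$, and dominated convergence gives
\[
\lim_{\tau\to 2\psi(\xi/2)^+}(\sigma\ast\sigma)(\xi,\tau)=\int_{\mathbb{S}^1}\frac{\d\mu_\omega}{2\langle\omega,H(\psi)(\xi/2)\cdot\omega\rangle}=\frac{\pi}{\sqrt{\det H(\psi)(\xi/2)}},
\]
the second equality being the elementary identity $\int_0^{2\pi}\frac{\d\theta}{a\cos^2\theta+b\sin^2\theta}=\frac{2\pi}{\sqrt{ab}}$ applied after diagonalizing the positive-definite matrix $H(\psi)(\xi/2)$. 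The only point requiring genuine care is the bookkeeping around the degeneracy of the polar parametrization at $\tau=2\psi(\xi/2)$, which is controlled by the uniform positivity of $H(\psi)$; the rest is a careful application of the change of variables formula.
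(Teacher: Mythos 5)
Your proposal is correct, and it reaches the same representation formulas as the paper, but by a noticeably more direct route. The paper first performs the intermediate change of variables $y=T(w)=\la(w)w$ defined implicitly via \eqref{implicitlambda}, which normalizes the level sets of $z\mapsto\psi(\xi/2+z)+\psi(\xi/2-z)$ to spheres; it then computes the Jacobian $\det T'(w)$ via the Matrix Determinant Lemma, passes to polar coordinates in $w$, substitutes $s=2r^2$, evaluates the delta, and only then obtains \eqref{preHessian} before applying the Fundamental Theorem of Calculus. Your argument skips the map $T$ entirely: you go to polar coordinates in the original variable $z$, and along each ray you change $r\mapsto\tau=F_\xi(r\omega)$; the factor $r/\partial_r F_\xi(r\omega)$ that appears is exactly what FTC turns into the reciprocal of the Hessian integral in \eqref{ConvolutionFormula}. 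You correctly identify $\alpha$ as the unique positive root $r(\xi,\tau,\omega)$ of $F_\xi(r\omega)=\tau$, which is indeed equivalent to the paper's definition via $\la$. Your derivation also yields (a) and (b) as byproducts of the same coordinate change, whereas the paper proves (a) separately (by pairing $\sigma\ast\sigma$ against indicator functions of null sets, using strict convexity in a single $s_1$ variable to get an at-most-2-to-1 change of variables). Both are sound; your unified treatment is cleaner here, while the paper's auxiliary map $T$ is reintroduced in the proof of the comparison principle (Theorem \ref{comparison-convolution}) where it does genuinely useful work, so the paper's choice is motivated by the global structure of the argument. For (d), your appeal to the uniform lower bound $H(\psi)\geq 2I$ and dominated convergence, together with diagonalization and the elementary circle integral, matches the paper's reasoning.

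One small point worth tightening if you were to write this up in full: the statement that (a) and (c) follow ``in one stroke'' from the change of variables needs the observation that the diagonal set $\{y_1=y_2\}$ (equivalently $z=0$, the preimage of the boundary $\tau=2\psi(\xi/2)$) is Lebesgue-null in $(\R^2)^2$, so that $\sigma\ast\sigma$ assigns no mass to the boundary of its support, and the fact that the resulting density is a nonnegative locally integrable function (bounded by $\pi/2$ thanks to $H(\psi)\geq 2I$). These are easy to supply, but without them the phrase ``in one stroke'' oversells the ease of the step.
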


\begin{remark}\label{Foschi} In the special case $\phi=0$, the Hessian of $\psi$ is a constant multiple of the identity matrix, and formula \eqref{ConvolutionFormula} recovers the result from \cite[Lemma 3.2]{F} for the convolution of projection measure $\sigma_0$ on the two-dimensional paraboloid $\Sigma_0\subset\R^3$: For $\tau> {|\xi|^2/2}$,
$$(\sigma_0\ast\sigma_0)(\xi,\tau)=\int_{\mathbb{S}^1}\Big(\int_{-1}^1\langle \omega, 2\omega\rangle \d t\Big)^{-1}\d\mu_\omega=\frac{\pi}{2}.$$
\end{remark}

\begin{proof}[Proof of Proposition \ref{ConvolFormulaProp}]
The absolute continuity of $\sigma\ast\sigma$ with respect to Lebesgue measure follows in the same way as in the proof of \cite[Lemma 3.1 (b)]{BM}, 
with minor modifications only. We provide the details for the convenience of the reader. In order to show that the pairing
$\langle\sigma\ast\sigma,\mathbbm{1}_E\rangle=0$ for each set $E$ of Lebesgue measure zero in $\R^3$, set $y=(y_1,y_2)$, $z=(z_1,z_2)$, and change variables $t_j=y_j+z_j$, $s_j=y_j-z_j$ ($j=1,2$) 
to get
\begin{align*}
\langle\sigma\ast\sigma,\mathbbm{1}_E\rangle
&=\int_{(\R^2)^2} \mathbbm{1}_E(y+z,\psi(y)+\psi(z)) \d y\d z\\
&=\frac 1 4\int_\R\Big(\int_{\R^2} \int_\R \mathbbm{1}_E(t_1,t_2,F_t(s_1,s_2)) \d s_1 \d t \Big)\d s_2,
\end{align*}
where the function $F_t$ is defined as
$$F_t(s)=\psi\Big(\frac{t+s}{2}\Big)+\psi\Big(\frac{t-s}{2}\Big).$$
The key observation is that $F_t(s_1,s_2)$ is a strictly convex function of $s_1$ for each fixed $s_2$ and $t$. The change of variables $s_1\mapsto u$ given by the (at most 2--to--1) map $u=F_t(s_1,s_2)$ shows that the triple integral in $(s_1,t)$ is zero (for each $s_2$) since $E$ is a Lebesgue null set. This establishes (a).

\noindent For (b), consider vectors $y,y'\in\R^2$, and note that 
$$\xi:=y+y'\textrm{ and }\tau:=\psi(y)+\psi(y')$$
satisfy $\tau\geq 2\psi(\xi/2)$ because the function $\psi$ is convex. For the reverse inclusion, let $(\xi,\tau)\in \R^{2+1}$ be given, such that $\tau\geq 2\psi(\xi/2)$. We want to find $y,y'\in\R^2$, \mbox{such that}
$$y+y'=\xi\textrm{ and }\psi(y)+\psi(y')=\tau.$$
It is enough to find $y$ such that $\psi(y)+\psi(\xi-y)=\tau$, for then $y'=\xi-y$. Note that $\psi(y)+\psi(\xi-y)\geq 2\psi(\xi/2)$ by convexity of $\psi$, with equality if $y=\xi/2$. Moreover,
$$\psi(y)+\psi(\xi-y)\to\infty,\textrm{ as }|y|\to\infty.$$
The function $y\mapsto \psi(y)+\psi(\xi-y)$ is continuous because $\psi$ is convex, and the result follows from applying the Intermediate Value Theorem in the appropriate direction.

\noindent We now come to part (c). Let $(\xi,\tau)$ be such that $\tau> 2\psi(\xi/2)$. Fubini's Theorem and  a simple change of variables yield\footnote{For a treatment of integration on manifolds using delta calculus, see \cite[Appendix A]{FOS}.} 
\begin{align}
(\sigma\ast\sigma)(\xi,\tau)
&=\int_\R\Big(\int_{\R^2}\ddirac{\tau-t-\psi(\xi-y)}\ddirac{t-\psi(y)}\d y\Big)\d t\notag\\
&=\int_{\R^2}\ddirac{\tau-\psi( \xi/2 +y)-\psi( \xi/2 -y)}\d y.\label{DeltaCalc}
\end{align}
We would like to perform another change of variables  $y=T(w)$, where $T(w)=\lambda w$, and $\lambda=\lambda(w)> 0$ is an 
implicit real-valued function of $w$ which takes only positive values, and is defined via
\begin{equation}\label{implicitlambda}
\psi( \xi/2+\lambda w)+\psi( \xi/2-\lambda w)
=2|w|^2+{2\psi(\xi/2)}.
\end{equation}
For fixed $\xi$, a unique positive solution $\la=\la(w)$ exists if ${w\neq 0}$. By the Implicit Function Theorem, 
equation 
\eqref{implicitlambda} defines $\la$ as a continuously differentiable function of $w$, as long as the derivative of the map
\[\lambda\mapsto
\psi( \xi/2+\lambda w)+\psi( \xi/2-\lambda w)\]
is nonzero. In view of the strict convexity of the function $\psi$, this is indeed the case if $\la>0$. Further details in a more general context will be 
provided in the course of the proof of Lemma \ref{transformation} below.
Since the function $\lambda$ is continuously differentiable and $T(w)=\lambda(w)w$, we have that 
\begin{equation}\label{1Tprime}
T'(w)=\lambda I+w\cdot (\nabla\la)^T,
\end{equation}
where $I$ denotes the $2\times 2$ identity matrix, the gradient is taken with respect to $w$, and the term $w\cdot (\nabla\la)^T$ stands  for the $2\times 2$ matrix obtained as the product of the vector $w$ and the transpose of the gradient $\nabla\la$ (seen as a vector in $\R^2$). 
To compute the gradient $\nabla\la$, note that implicit differentiation of \eqref{implicitlambda} with respect to $w$ yields
\begin{equation}\label{2Tprime}
(T')^T(w)\cdot u=4w,\textrm{ where }u=u(w,\xi):=\nabla\psi( \xi/2+\lambda w)-\nabla\psi( \xi/2-\lambda w).
\end{equation}
From \eqref{1Tprime} and \eqref{2Tprime}, it follows that 
\begin{equation}\label{GradientLambda}
\nabla\la=\frac{4w-\la u}{\langle w, u\rangle}.
\end{equation}
One easily computes 
$$\det T'(w)
=\det (\lambda I+w\cdot (\nabla\la)^T)
=(1+\la^{-1}\langle w,\nabla \la\rangle)\det(\la I),
$$
and identity \eqref{GradientLambda} then implies
\begin{equation}\label{detTprime}
\det T'(w)=\frac{4|w|^2\la(w)}{\langle w, u(w,\xi)\rangle}.
\end{equation}
Note that this is a nonnegative quantity because of the strict convexity of $\psi$. Going back to the integral expression for $\sigma\ast\sigma$, changing variables as announced, and switching to polar coordinates, yields
\begin{align*}
(\sigma\ast\sigma)(\xi,\tau)
&=
\int_{{\R^2}}\ddirac{\tau-{2\psi(\xi/2)}-2|w|^2} \det T'(w) \d w\\
&=
\int_{{0}}^\infty\ddirac{\tau-{2\psi(\xi/2)}-2 r^2} \Big(\int_{\mathbb{S}^1}\det T'(r\omega) \d\mu_\omega\Big) r \d r,
\end{align*}
where $\mu$ denotes arc length measure on the unit circle $\mathbb{S}^1\subset \R^2$. Using expression \eqref{detTprime} for the Jacobian factor $\det T'$, 
changing variables $2r^2=s$, and appealing to Fubini's theorem, we have that
\begin{align*}
(\sigma\ast\sigma)(\xi,\tau)
=
\int_{\mathbb{S}^1}
\Big(\int_{{0}}^\infty \ddirac{\tau-{2\psi(\xi/2)}-s}\frac{\sqrt{s/2}\la(\sqrt{s/2}\omega)}{\langle\omega, u(\sqrt{s/2} \omega,\xi)\rangle} \d s\Big)
\d\mu_\omega.
\end{align*}
Evaluating the inner integral,
$$(\sigma\ast\sigma)(\xi,\tau)
=
\int_{\mathbb{S}^1}
\frac{{\sqrt{\tau/2-\psi(\xi/2)}}\la({\sqrt{\tau/2-\psi(\xi/2)}}\omega)}{\langle\omega, u({\sqrt{\tau/2-\psi(\xi/2)}} \omega,\xi)\rangle} \d\mu_\omega.
$$
Defining the function $\alpha=\alpha(\xi,\tau,\omega)$ as in \eqref{alphaDef}, and 
recalling the expression in \eqref{2Tprime} for the vector $u$, 
\begin{equation}\label{preHessian}
(\sigma\ast\sigma)(\xi,\tau)
=
\int_{\mathbb{S}^1}
\Big\langle \omega, \frac{\nabla \psi(\xi/2+\alpha\omega)-\nabla \psi(\xi/2-\alpha\omega)}{\alpha}\Big\rangle^{-1}
\d\mu_\omega.
\end{equation}
 Formula \eqref{ConvolutionFormula} now follows from the Fundamental Theorem of Calculus. 
 
\noindent  As for part (d), the continuity in the interior of the support follows from an inspection of representation formula 
\eqref{ConvolutionFormula}, after recalling the fact that the function $\la$ is continuous. The boundary value is obtained by noting that, for each $\omega\in\mathbb{S}^1$, the function  
$\alpha(\xi,\tau,\omega)$ tends to $0$ as $(\xi,\tau)$ approaches the boundary of the support from 
its interior, since the function $\la$ satisfies $0\leq\la\leq 1$. This yields
$$(\sigma\ast\sigma)(\xi,2\psi(\xi/2))=\frac 12\int_{\mathbb{S}^1} \frac{1}{\langle\omega,H(\psi)(\xi/2)\cdot\omega\rangle}\d\mu_\omega,$$
from which  identity
\eqref{bdryvaluesDet} follows by using an orthonormal basis of $\R^2$ consisting of 
eigenvectors of the Hessian matrix $H(\psi)(\xi/2)$. The proof is now complete.
\end{proof}

\begin{remark} 
Identity \eqref{preHessian} already implies a weak form of the comparison principle (Theorem \ref{comparison-convolution}) in the two-dimensional case.
Analogous reasoning leads to similar formulae for higher dimensional hypersurfaces. This is one of the motivations for the next chapter, which shares some features with the proof of Proposition \ref{ConvolFormulaProp}. 
However, the analysis there seems more flexible, and may be adaptable to other situations \mbox{as well}.
\end{remark}

\section{A comparison result}\label{sec:Comparison}

This chapter is devoted to the proof of Theorem \ref{comparison-convolution}, which holds in dimensions $d\geq 2$.
Before stating the technical lemmata that will be used in its proof, let us consider two convex functions $\psi,\varphi:\R^d\to\R$. 
 Given $\xi,y\in\R^d$, define the following auxiliary functions of one real variable:
\begin{align}
g(t)&:=\psi(\xi/2-ty)+\psi(\xi/2+ty)-2\psi(\xi/2),\label{gdef}\\
 h(t)&:=\vphi(\xi/2-ty)+\vphi(\xi/2+ty)-2\vphi(\xi/2).\label{hdef}
\end{align}
Note that $g=h\equiv 0$ if $y=0$. Some properties of the functions $g,h$ in a useful special case are collected in the following lemma.

\begin{lemma}\label{convexity}
Let $\psi,\varphi:\R^d\to\R$ be differentiable, convex functions, such that their difference $\psi-\varphi$ is also convex.
 Given $\xi,y\in\R^d$, define the functions $g, h$ as above. Then:
 \begin{enumerate}
 \item[(a)] $g(t)\geq h(t)\geq 0$, for every $t\in\R$.
  \item[(b)] The functions $g$ and $h$ are convex.
  \item[(c)] $g'(0)=h'(0)=0$.
  \item[(d)] If $\psi$ is strictly convex and $y\neq 0$, then $g$ attains its unique global minimum at $t=0$.
  \item[(e)] If $\psi$ is strictly convex and $y\neq 0$, then there exists a unique nonnegative $\la=\la(y,\xi)$ such that
  \[h(1)=g(\la),\]
and  moreover $\la\in[0,1]$.
 \item[(f)] If $h(1)>0$, then $\la>0$. If $h(1)<g(1)$, then $\la<1$.
 \end{enumerate}
\end{lemma}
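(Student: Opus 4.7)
The plan is to treat each of the six items in turn, with (a), (b), (c) being essentially direct consequences of the definitions of $g$ and $h$, while (d), (e), (f) require exploiting strict convexity more carefully.

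For (a) and (b), the key observation is that $t\mapsto\psi(\xi/2\pm ty)$ is the composition of the convex function $\psi$ with an affine map, hence convex, so that $g$ and $h$ are sums of convex functions and therefore convex. The nonnegativity of $h$ follows from applying the midpoint convexity inequality to $\vphi$ at the points $\xi/2\pm ty$. To obtain $g(t)\geq h(t)$, I would introduce the difference $\rho:=\psi-\vphi$, which is convex by hypothesis, and write $g(t)-h(t)=\rho(\xi/2+ty)+\rho(\xi/2-ty)-2\rho(\xi/2)$; this is nonnegative, again by midpoint convexity applied to $\rho$. Part (c) is a one-line differentiation: the chain rule gives $g'(t)=\langle\nabla\psi(\xi/2+ty)-\nabla\psi(\xi/2-ty),y\rangle$, which vanishes at $t=0$, and analogously for $h$.

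For (d), I would strengthen the midpoint convexity argument: when $y\neq 0$ and $t\neq 0$, the points $\xi/2+ty$ and $\xi/2-ty$ are distinct, and strict convexity of $\psi$ yields a strict inequality $2\psi(\xi/2)<\psi(\xi/2+ty)+\psi(\xi/2-ty)$, so $g(t)>0=g(0)$ for all $t\neq 0$.

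Part (e) is the place that will require the most care. From (b) and (c), $g$ is convex on $\R$ with $g'(0)=0$, hence nondecreasing on $[0,\infty)$. From (d) applied there, $g$ has a unique minimum at $0$, and the standard fact that a convex function which is constant on a subinterval has every point of that subinterval as a minimizer forces $g$ to be \emph{strictly} increasing on $[0,\infty)$. Combining (a) with this gives $0=g(0)\leq h(1)\leq g(1)$, so the Intermediate Value Theorem produces a (unique) $\la\in[0,1]$ with $g(\la)=h(1)$. Finally, (f) is immediate from this uniqueness: if $h(1)>0=g(0)$ then $\la\neq 0$, and if $h(1)<g(1)$ then $\la\neq 1$. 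The main obstacle, if any, is the rigorous derivation of strict monotonicity of $g$ on $[0,\infty)$ from convexity plus uniqueness of the minimum; everything else is a bookkeeping exercise with the convexity hypotheses.
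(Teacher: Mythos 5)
Your proof is correct and follows essentially the same elementary convexity route as the paper: midpoint convexity for (a), convexity along lines for (b), and the interplay of $g'(0)=0$ with strict convexity for (d)--(f). The only small organizational difference is in (c)--(e): the paper obtains (c) from the observation that $t=0$ is a global minimizer and then invokes strict convexity of $g$ directly to get uniqueness in (e), whereas you compute $g'(0)$ by the chain rule and then derive strict monotonicity of $g$ on $[0,\infty)$ from convexity plus the uniqueness of the minimum; both are valid and neither is materially simpler.
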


\begin{proof}
The inequality $h\geq 0$ follows from the (midpoint) convexity of the function $\varphi$. The inequality $g\geq h$ follows from the (midpoint) convexity of the function $\psi-\vphi$. This establishes (a).
Statement (b) is a consequence of the following two general facts: sums of convex functions are convex, and restrictions of convex functions to lines are convex.
Differentiability of the functions $g,h$ follows from that of $\psi,\vphi$. In light of (a), both $g$ and $h$ attain a (local, and therefore global) minimum at $t=0$ since $g(0)=h(0)=0$, and (c) follows. 
Further notice that $g$ is strictly convex if $\psi$ is strictly convex, provided $y\neq 0$. Since a strictly convex function can have at most one global minimum, (d) follows from (c).
We now consider statement (e). Since $g$ is continuous and $g(0)\leq h(1)\leq g(1)$,  the 
Intermediate Value Theorem ensures the existence of  
$\la\in[0,1]$ such that $h(1)=g(\la)$. There exists no  $\la$ in the interval $(1,\infty)$ with the same property because $g$ is strictly convex, and therefore 
$g(t)>g(1)$ if $t>1$. The uniqueness of $\la$ also follows from the strict convexity of $g$. Statement (f) is immediate, and this concludes the proof of the lemma.
\end{proof}

\noindent Henceforth we restrict attention to continuously differentiable functions $\psi, \vphi$ which are strictly convex, and introduce two sets which will play a  role in the proof of Theorem
 \ref{comparison-convolution}. Given $\xi\in\R^d$ and 
$c\in\R$, define the {\it $\psi$-ellipsoid} 
as
\begin{align}
 \label{psi-ellipsoid}
 \mathcal E_\psi(\xi,c):=\{y\in\R^d:\psi(\xi/2-y)+\psi(\xi/2+y)-2\psi(\xi/2)=c\},
 \end{align}
and similarly for the {\it $\vphi$-ellipsoid} $\mathcal E_\vphi(\xi,c)$. We will abuse notation mildly by occasionally referring to these sets simply as ``ellipsoids''.
The sets  $\mathcal E_\psi(\xi,c)$ and $\mathcal E_\vphi(\xi,c)$ are non-empty provided $c\geq 0$, and codimension 1 hypersurfaces if $c>0$. 
This claim requires a short justification which goes as follows. Since the function $\psi$ is differentiable and strictly convex, its gradient $\nabla \psi$ is a strictly monotone mapping, in the sense that
$$\langle\nabla\psi(x)-\nabla\psi(x'),x-x'\rangle>0,\textrm{ for every }x\neq x',$$ 
see, for instance, \cite[p.~112]{HUL}. As a consequence, any positive number $c>0$ is a regular value of the function $F_\psi:\R^d\to\R$, defined via
$$y\mapsto F_\psi(y)=\psi(\xi/2-y)+\psi(\xi/2+y)-2\psi(\xi/2),$$
and the claim follows for the ellipsoid $\mathcal E_\psi(\xi,c)=F_\psi^{-1}(c)$. The assertion for $\vphi$ can be verified in an identical way. Further note that, for each fixed $\xi\in\R^d$, the disjoint union of the ellipsoids $\mathcal E_\psi(\xi,c)$ as the parameter $c\geq 0$ ranges over the nonnegative real numbers equals the whole of $\R^d$, and similarly for $\vphi$.

\noindent After these preliminary observations, define the transformation 
\begin{equation}
 \label{transformation-T}
T:\R^d\setminus\{0\}\to\R^d,\;\;\; T(y)=\la(y,\xi)y,
\end{equation}
where $\la(y,\xi)$ is given by part (e) of Lemma \ref{convexity}. In other words, the real number $\la=\la(y,\xi)$ is the unique 
nonnegative solution of the equation
\begin{equation}\label{ImplicitIdentity}
\vphi(\xi/2-y)+\vphi(\xi/2+y)-2\vphi(\xi/2)=\psi(\xi/2-\la y)+\psi(\xi/2+\la y)-2\psi(\xi/2).
\end{equation}
 Relevant properties of the transformation $T$ are 
recorded in the next result.

\begin{lemma}\label{transformation}
 Let $\psi,\varphi:\R^d\to\R$ be continuously differentiable, strictly convex functions 
  with a convex difference $\psi-\varphi$.
Let $\xi\in \R^d$ be given, and consider the transformation $T$  given by \eqref{transformation-T}. Then:
 \begin{enumerate}
  \item[(a)] $T$ is injective. 
  \item[(b)] $T$ is continuously differentiable.
  \item[(c)] If $T'(y)$ denotes the Jacobian matrix of $T$ at a point $y\neq 0$, then 
  \begin{equation}\label{detT'formula}
 \det T'(y)
 =\la(y)^{d-1}\frac{\langle\nabla\vphi(\xi/2+y)-\nabla\vphi(\xi/2-y), y\rangle}{\langle\nabla\psi(\xi/2+T(y))-\nabla\psi(\xi/2-T(y)), y\rangle}.
 \end{equation} 
  \item[(d)] $T$ defines a bijection from the ellipsoid $\mathcal E_\varphi(\xi,c)$ onto the ellipsoid
$\mathcal E_\psi(\xi,c)$, for every 
$c>0$.
 \end{enumerate}
\end{lemma}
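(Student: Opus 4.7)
The plan is to handle the four parts in order, leveraging Lemma \ref{convexity} and the Implicit Function Theorem applied to the defining relation \eqref{ImplicitIdentity}. With $\xi$ fixed, abbreviate
\[F_\psi(z):=\psi(\xi/2+z)+\psi(\xi/2-z)-2\psi(\xi/2),\quad F_\vphi(y):=\vphi(\xi/2+y)+\vphi(\xi/2-y)-2\vphi(\xi/2),\]
so the definition of $\la$ reads $F_\psi(\la y)=F_\vphi(y)$. For (a), $T$ preserves each ray from the origin because $\la\geq 0$. Restricted to the ray $\{s\omega:s\geq 0\}$ for a fixed unit vector $\omega$, both $s\mapsto F_\psi(s\omega)$ and $s\mapsto F_\vphi(s\omega)$ are continuous and strictly increasing on $[0,\infty)$: they are strictly convex with global minimum $0$ at the origin by Lemma \ref{convexity}(d) applied in the direction $\omega$. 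Hence along this ray $T$ acts as $s\omega\mapsto F_\psi^{-1}(F_\vphi(s\omega))\omega$, which is strictly increasing in $s$; combined with ray-preservation, this yields global injectivity.

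For (b), I would apply the Implicit Function Theorem to $\Phi(y,\la):=F_\psi(\la y)-F_\vphi(y)$ at the solution $\la=\la(y)$ from Lemma \ref{convexity}(e). One has
\[\partial_\la\Phi(y,\la)=\langle\nabla F_\psi(\la y),y\rangle=\langle\nabla\psi(\xi/2+\la y)-\nabla\psi(\xi/2-\la y),y\rangle,\]
which is strictly positive whenever $\la y\neq 0$ by strict monotonicity of $\nabla\psi$; Lemma \ref{convexity}(f), combined with $F_\vphi(y)>0$ for $y\neq 0$ (from strict convexity of $\vphi$), guarantees $\la>0$ and hence $\la y\neq 0$. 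This gives $\la\in C^1(\R^d\setminus\{0\})$ and therefore $T\in C^1(\R^d\setminus\{0\})$. For (c), since $T(y)=\la(y)y$, the Jacobian is the rank-one perturbation $T'(y)=\la\,I_d+y(\nabla\la)^T$, and the matrix determinant lemma gives $\det T'(y)=\la^d+\la^{d-1}\langle\nabla\la,y\rangle$. To eliminate $\nabla\la$, I differentiate $F_\psi(\la y)=F_\vphi(y)$ with respect to $y$, obtaining the vector equation $\la\,\nabla F_\psi(\la y)+\langle\nabla F_\psi(\la y),y\rangle\,\nabla\la=\nabla F_\vphi(y)$; taking the inner product with $y$ and solving for $\langle\nabla\la,y\rangle$ produces
\[\det T'(y)=\la^{d-1}\,\frac{\langle\nabla F_\vphi(y),y\rangle}{\langle\nabla F_\psi(\la y),y\rangle},\]
which is precisely \eqref{detT'formula} after expanding the two gradients. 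The same monotonicity argument confirms that both inner products are positive, so $\det T'(y)>0$.

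For (d), by construction $F_\psi(T(y))=F_\vphi(y)$, so $T$ maps $\mathcal E_\vphi(\xi,c)$ into $\mathcal E_\psi(\xi,c)$, and injectivity comes from (a). For surjectivity, fix $z\in\mathcal E_\psi(\xi,c)$, write $z=r\omega$ with $r>0$, and seek $y=s\omega$ with $T(y)=z$. A bounded convex function on $\R$ is constant, so strict convexity of $\vphi$ forces $s\mapsto F_\vphi(s\omega)$ to be coercive on $[0,\infty)$; by continuity and strict monotonicity it attains the value $c=F_\psi(r\omega)$ at a unique $s>0$. For this $s$, strict monotonicity of $F_\psi$ along the ray then forces $\la(s\omega)\,s=r$, whence $T(s\omega)=z$. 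The main delicate step is (c), where I have to keep the vector--matrix bookkeeping straight in both the rank-one determinant identity and the implicit differentiation; everything else is a careful assembly of Lemma \ref{convexity} together with the elementary fact that a nonconstant convex function on $\R$ is coercive on any ray where it is monotone.
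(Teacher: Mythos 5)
Your proof is correct and follows essentially the same strategy as the paper: injectivity and surjectivity via strict monotonicity of the functions $F_\psi, F_\vphi$ along rays, continuous differentiability via the Implicit Function Theorem on the defining relation, and the Jacobian determinant via implicit differentiation together with the matrix determinant lemma. The only cosmetic differences are that for part (d) you locate the preimage of $z=r\omega$ directly by solving $F_\vphi(s\omega)=c$ along the ray rather than appealing to $|Ty|\leq|y|$ and $|Ty|\to\infty$ as in the paper, and that it would read more cleanly to establish $\lambda(y)>0$ for $y\neq 0$ (which you do inside (b)) before invoking ray-preservation for injectivity in (a); neither affects correctness.
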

\begin{proof}
 To prove (a), let us consider nonzero vectors $y,z\in\R^d$ such that $T(y)=T(z)$. 
Then
 \[\psi(\xi/2-\la(y) y)+\psi(\xi/2+\la(y) y)
 =\psi(\xi/2-\la(z) z)+\psi(\xi/2+\la(z)z),\]
  where, for  notational convenience, we have dropped the 
dependence of $\la$ on $\xi$. 
 This implies
 \[\vphi(\xi/2-y)+\vphi(\xi/2+y)=\vphi(\xi/2-z)+\vphi(\xi/2+z).\]
 Since  $y=rz$ for some $r>0$, and the function $t\mapsto \vphi(\xi/2-tz)+\vphi(\xi/2+tz)$ is strictly increasing on $(0,\infty)$, 
 we obtain $r=1$. This means  $y=z$, as desired. 

\noindent Property (b) will follow from the Implicit Function Theorem, after showing that the derivative of the map 
$t\mapsto g(t)=\psi(\xi/2-ty)+\psi(\xi/2+ty)-2\psi(\xi/2)$
is nonzero for each $\xi,y\in\R^d$ with $y\neq 0$, provided $t>0$. 
This derivative equals 
\[g'(t)=\langle\nabla\psi (\xi/2+ty)-\nabla\psi(\xi/2-ty), y\rangle,\]
which is nonzero because of the strict convexity of $\psi$. Indeed, in the proof of Lemma \ref{convexity} we have already argued that $g$ is a strictly convex $C^1$ function which attains its unique global minimum at $t=0$, 
hence $g'(t)>0$ for every $t>0$.
Alternatively, recall that the gradient $\nabla \psi$ is a strictly monotone mapping.

\noindent To verify (c), we compute the Jacobian matrix of $T$ in an analogous way to what was  done in the proof of Proposition \ref{ConvolFormulaProp}. Implicit differentiation with respect to the variable $y$ of identity \eqref{ImplicitIdentity} with $\la=\la(y)$ yields
\[(\la I+\nabla \la\cdot  y^T)\cdot u=v,\]
where 
the vectors $u,v\in\R^d$ are defined by
\begin{align*}
u&=\nabla\psi (\xi/2+T(y))-\nabla\psi(\xi/2-T(y)),\\
v&=\nabla\vphi (\xi/2+y)-\nabla\vphi(\xi/2-y).
\end{align*}
For $y\neq 0$, it follows that
$$\nabla \la=\frac{v-\la u}{\langle u, y\rangle},$$
where the denominator $\langle u, y\rangle$ is strictly positive because the gradient $\nabla\psi$ is strictly monotone and the vector $T(y)$ is collinear with $y$.
Using this together with the Matrix Determinant Lemma, we arrive at identity \eqref{detT'formula}:
$$\det T'(y)=\det(\la I+\nabla\la\cdot y^T)
=\det(\lambda I)(1+\la^{-1} \langle y, \nabla\la\rangle)
=\la^{d-1}\frac{\langle v, y\rangle}{\langle u, y\rangle}.$$

\noindent We finally turn to (d). That the transformation $T$ has the desired mapping properties from $\mathcal E_\varphi$ into $\mathcal E_\psi$ follows from the defining identity \eqref{ImplicitIdentity}. In view of (a), the restriction of $T$ to the set $\mathcal E_\varphi$ is an injective map. So we are left with verifying surjectivity. The previous considerations show that, given $c>0$ and $z\in \mathcal E_\psi(\xi,c)$, it suffices to find {\it any} vector $y\in\R^d$ for which $T(y)=z$ (for such $y$ will then necessarily belong to $\mathcal E_\vphi(\xi,c)$). But $T$ is a continuous map which preserves rays emanating from the origin, such that $|Ty|\leq |y|$ for every $y\neq 0$, and 
$$\lim_{|y|\to\infty} |Ty|=\infty.$$
The result  follows from the Intermediate Value Theorem.
 \end{proof}

\noindent Recall that $\ab{T(y)}\leq \ab{y}$, for every $y\neq 0$. We would like to argue that the transformation $T$ is contractive in the sense that $|\det 
T'|< 1$. Unfortunately, an explicit computation involving the example $\vphi(x)=|x|^4$ and $\psi(x)=|x|^2+|x|^4+|x|^6$  reveals that,  perhaps 
unintuitively, one should not expect that to be the case in general. We will be interested  in convex perturbations of the paraboloid, and so the following 
result will suffice for our purposes.
 
 \begin{lemma}\label{contractive}
 Let $d\geq 2$.
 Let $\vphi=|\cdot|^2$ and $\psi=|\cdot|^2+\phi$, where $\phi\geq0$ is a strictly convex  $C^1(\R^d)$ function.
Let $\xi\in \R^d$ be given, and consider the transformation $T$  given by \eqref{transformation-T}.
 Then 
\begin{equation}\label{<1}
|\det T'(y)|< 1, \textrm{ for every }y\neq 0.
\end{equation}
 \end{lemma}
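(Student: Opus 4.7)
The plan is to plug the explicit form $\vphi=|\cdot|^2$ into formula \eqref{detT'formula} from Lemma \ref{transformation}\,(c), and then exploit the strict convexity of the perturbation $\phi$ to produce a strict inequality.

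First I would compute the numerator explicitly. Since $\nabla\vphi(x)=2x$, one has $\nabla\vphi(\xi/2+y)-\nabla\vphi(\xi/2-y)=4y$, so the numerator of \eqref{detT'formula} equals $\la(y)^{d-1}\cdot 4|y|^2$. For the denominator, write $\nabla\psi=2\,\mathrm{id}+\nabla\phi$ and set $z:=T(y)=\la y$; then
\begin{equation*}
\langle\nabla\psi(\xi/2+z)-\nabla\psi(\xi/2-z),y\rangle=4\la|y|^2+\langle\nabla\phi(\xi/2+\la y)-\nabla\phi(\xi/2-\la y),y\rangle.
\end{equation*}
The second term is strictly positive: by Lemma \ref{convexity}\,(f) we have $\la>0$ whenever $y\neq 0$ (since $h(1)=2|y|^2>0$ in the present setting), so $\la y\neq 0$, and strict monotonicity of $\nabla\phi$ applied to the points $\xi/2+\la y$ and $\xi/2-\la y$ gives
\begin{equation*}
\langle\nabla\phi(\xi/2+\la y)-\nabla\phi(\xi/2-\la y),\,2\la y\rangle>0,
\end{equation*}
which after dividing by $2\la>0$ yields the required positivity.

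Combining these two computations, I obtain
\begin{equation*}
\det T'(y)=\frac{4\la(y)^{d-1}|y|^2}{4\la(y)|y|^2+\langle\nabla\phi(\xi/2+\la y)-\nabla\phi(\xi/2-\la y),y\rangle}<\frac{4\la(y)^{d-1}|y|^2}{4\la(y)|y|^2}=\la(y)^{d-2}.
\end{equation*}
Since Lemma \ref{convexity}\,(e) gives $\la(y)\in(0,1]$, I have $\la(y)^{d-2}\leq 1$ for every $d\geq 2$, which together with the displayed strict inequality furnishes $\det T'(y)<1$. Positivity of $\det T'(y)$ follows from the same formula (both numerator and denominator are positive), so $|\det T'(y)|=\det T'(y)<1$, as claimed.

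The only subtle point is the case $d=2$, where the auxiliary bound $\la^{d-2}\leq 1$ degenerates to an equality; the strictness of \eqref{<1} must then be extracted entirely from the strict positivity of the $\nabla\phi$-term in the denominator. This is the step where the strict convexity of $\phi$ (as opposed to mere convexity) is essential; all other steps in the argument go through under the weaker hypothesis.
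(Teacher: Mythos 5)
Your proof is correct and follows essentially the same route as the paper's: plug $\varphi=|\cdot|^2$ and $\psi=|\cdot|^2+\phi$ into formula \eqref{detT'formula}, use $\lambda>0$ (from Lemma \ref{convexity}\,(f), since $h(1)=2|y|^2>0$) together with strict convexity of $\phi$ to see that the denominator strictly exceeds $4\lambda|y|^2$, and then invoke $\lambda\in(0,1]$ with $d\geq 2$. The only cosmetic difference is that the paper packages the key inequality as $(g-h)'(\lambda)>0$ via the auxiliary functions $g,h$ and the homogeneity identity $\lambda^{d-1}h'(1)=\lambda^{d-2}h'(\lambda)$, whereas you unfold the same computation explicitly in terms of $\nabla\phi$ and strict monotonicity of the gradient; both yield identical bounds.
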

 
 \begin{proof}
 Fix $y\neq 0$.
 For the particular choices of $\psi,\vphi$ as in the statement of the lemma, define real-valued functions $g, h$ via identities \eqref{gdef} and \eqref{hdef}.
In this case,  $h'(t)=4|y|^2t$, a homogenous function of degree 1.
Identity \eqref{detT'formula} then implies 
 \begin{equation}\label{hmgDetT}
 \det T'(y)=\lambda(y)^{d-1}\frac{h'(1)}{g'(\lambda(y))}=\lambda(y)^{d-2}\frac{h'(\lambda(y))}{g'(\lambda(y))}.
 \end{equation}
 We have already argued that $g-h$ is a nonnegative,  differentiable, strictly convex function satisfying $(g-h)(0)=0$ and $(g-h)'(0)=0$. It follows that $(g-h)'(t)> 0$ for every $t> 0$, which means that the fraction on the right-hand side of identity \eqref{hmgDetT} is strictly less than 1 as long as $\lambda(y)>0$. That this is indeed the case follows from part (f) of Lemma \ref{convexity}, since $h(1)=2|y|^2>0$. The proof is finished by noting that $\la(y)\leq 1$ and $d\geq 2$ together imply $\la(y)^{d-2}\leq 1$.
\end{proof}
 
\noindent We have now collected all the ingredients needed to prove Theorem \ref{comparison-convolution}.

\begin{proof}[Proof of  Theorem \ref{comparison-convolution}]
As in the proof of Proposition \ref{ConvolFormulaProp}, the  convolutions can be written as
\begin{align}
 (\sigma\ast\sigma)(\xi,\tau)&=\int_{\R^d}\ddirac{\tau-\psi(\xi/2-y)-\psi(\xi/2+y)}\d y,\label{PsiConvolution}\\
(\sigma_0\ast\sigma_0)(\xi,\tau)&=\int_{\R^d}  \ddirac{\tau-\vphi(\xi/2-y)-\vphi(\xi/2+y)}\d y.\notag
\end{align}
A straightforward adaptation of the arguments there shows that the convolution $\sigma\ast\sigma$ is supported on the region  $\{(\xi,\tau):\tau\geq 
2\psi(\xi/2)\}$.  Since $\phi\geq 0$, this region is contained in the support of the convolution
$\sigma_0\ast\sigma_0$, i.e., the set $\{(\xi,\tau):\tau\geq 
2\vphi(\xi/2)\}$. 

\noindent For each fixed $\xi\in\R^d$,  consider the transformation $T$ given by \eqref{transformation-T}, which by  Lemma \ref{transformation} maps the ellipsoid 
$\mathcal E_\vphi (\xi,\tau)$
bijectively onto $\mathcal E_\psi (\xi,\tau)$,  for every $\tau>0$.
Changing variables $y\rightsquigarrow Ty$ in the expression \eqref{PsiConvolution} for $\sigma\ast\sigma$, and appealing to the defining identity \eqref{ImplicitIdentity}, yields
\begin{align}
(\sigma\ast\sigma)(\xi, \tau)
 &=\int_{\R^d}\ddirac{\tau-\psi(\xi/2-Ty)-\psi(\xi/2+Ty)}|\det T'(y)|\d y\notag\\
 &=\int_{\R^d}\ddirac{\tau-2\phi(\xi/2)-\vphi(\xi/2-y)-\vphi(\xi/2+y)}|\det T'(y)|\d y.\label{beforestrict}
\end{align}
From Lemma \ref{contractive}, we know that  $|\det T'|\leq 1$, and so H\"older's inequality implies
\[(\sigma\ast\sigma)(\xi,\tau)
\leq
 (\sigma_0\ast\sigma_0)(\xi,\tau-2\phi(\xi/2)),\]
for every $\xi\in\R^d$ and $\tau>0$. This is equivalent to inequality \eqref{comparisonineq}.
We now use the full power of \eqref{<1} to argue that this inequality must be strict at every point in the interior of the support of $\sigma\ast\sigma$. Let  $(\xi,\tau)$ be one such point, for which $c:=\tau-2\psi(\xi/2)>0$. It is straightforward to check that the singular measure that is being integrated in \eqref{beforestrict} is supported on the ellipsoid $\mathcal E_\vphi(\xi,c)$. Since $c>0$, this ellipsoid does not contain the origin, and by Lemma \ref{contractive}  the strict inequality $|\det T'(y)|<1$ holds at every point $y\in \mathcal E_\vphi(\xi,c)$. This can be strengthened to $|\det T'(y)|\leq c_0$ for some fixed $c_0<1$ (which depends on $\phi,\xi,\tau$ but not on $y$), since the set $\mathcal E_\vphi(\xi,c)$ is compact and the function $y\mapsto\det T'(y)$ is continuous. The result now follows from replacing the $\delta$-function appearing in the integral \eqref{beforestrict} by an appropriate $\varepsilon$-neighborhood of the ellipsoid $\mathcal E_\vphi(\xi,c)$, and then analyzing 
the cases of equality in H\"older's inequality. To conclude the proof of the theorem, let $\varepsilon\to 0^+$.
\end{proof}

\begin{remark}
The previous discussion can be partially generalized to the case of $n$-fold convolutions for $n\geq 3$.
Defining the functions 
$$g_n(t)=\sum_{j=1}^{n-1}\psi(\xi/n-ty_j)+\psi(\xi/n+t\sum_{j=1}^{n-1}y_j)-n\psi(\xi/n),$$
$$h_n(t)=\sum_{j=1}^{n-1}\vphi(\xi/n-ty_j)+\vphi(\xi/n+t\sum_{j=1}^{n-1}y_j)-n\vphi(\xi/n),$$
we have the following generalization of Lemma \ref{convexity}, whose straightforward proof (omitted) can be done by induction on $n$.

\begin{lemma}\label{n-convexity}
Let $n\geq 2$. Let $\psi,\varphi:\R^d\to\R$ be differentiable, convex functions, such that their difference $\psi-\varphi$ is also  convex.
 Given $\xi,y_1,\dotsc,y_{n-1}\in\R^d$, define the functions $g_n, h_n$  as above. Then:
 \begin{enumerate}
 \item[(a)]  $g_n(t)\geq h_n(t)\geq 0$, for every $t\in\R$.
  \item[(b)] The functions $g_n$ and $h_n$ are convex.
  \item[(c)] $g_n'(0)=h_n'(0)=0$.
  \item[(d)] If $\psi$ is strictly convex and $(y_1,\dots,y_{n-1})\neq (0,\ldots,0)$, then $g_n$ attains its unique global minimum at $t=0$.
  \item[(e)] If $\psi$ is strictly convex and $(y_1,\dots,y_{n-1})\neq (0,\ldots,0)$, then there exists a unique nonnegative $\la=\la(y_1,\dotsc,y_{n-1},\xi)$ such that
  \[h_n(1)=g_n(\la),\]
  moreover $\la\in[0,1]$.
 \item[(f)] If $h_n(1)>0$, then $\la>0$. If $h_n(1)<g_n(1)$, then $\la<1$.
 \end{enumerate}
 \end{lemma}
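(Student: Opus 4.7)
The plan is to prove Lemma \ref{n-convexity} by adapting the arguments from Lemma \ref{convexity} item by item, leveraging the key combinatorial observation that the $n$ vectors $\xi/n - ty_1,\ldots,\xi/n - ty_{n-1}, \xi/n + t\sum_{j=1}^{n-1} y_j$ always average to $\xi/n$, independently of $t$ and of the $y_j$'s. This is what drives every part of the lemma: any Jensen-type estimate against a convex function applied to this $n$-tuple collapses to $n$ times the value at $\xi/n$.

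For (a), I would bound $h_n(t) \geq 0$ by invoking midpoint/Jensen convexity of $\vphi$ on this $n$-tuple, and then derive $g_n \geq h_n$ by applying the identical estimate to the convex function $\psi - \vphi$. Part (b) is immediate: each summand in the definition of $g_n$ or $h_n$ is the composition of a convex function with an affine function of $t$, so the finite sum is convex. For (c), differentiating in $t$ and evaluating at $t=0$ collapses every occurrence of $\nabla \psi$ to $\nabla \psi(\xi/n)$ (and likewise for $\vphi$); what remains is the inner product of this common gradient with $-\sum_{j=1}^{n-1} y_j + \sum_{j=1}^{n-1} y_j = 0$, whence $g_n'(0) = h_n'(0) = 0$.

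For (d), if $(y_1,\ldots,y_{n-1}) \neq (0,\ldots,0)$, then at least one $y_j$ is nonzero, and strict convexity of $\psi$ makes the corresponding summand of $g_n$ a strictly convex function of $t$; the full sum is then strictly convex, and together with $g_n'(0)=0$ this forces $t=0$ to be the unique global minimum. Parts (e) and (f) follow verbatim as in the $n=2$ case: strict convexity of $g_n$ gives strict monotonicity on $[0,\infty)$, so continuity together with $g_n(0) = 0 \leq h_n(1) \leq g_n(1)$ and the Intermediate Value Theorem produce a unique $\lambda \in [0,1]$ with $g_n(\lambda) = h_n(1)$, while the strict inequalities in (f) become immediate consequences of the same monotonicity.

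The one piece of bookkeeping that deserves attention — rather than a genuine obstacle — is the asymmetric appearance of $\sum_{j=1}^{n-1} y_j$ as the single ``$+$'' term versus the $n-1$ individual ``$-$'' terms: one must verify that the corresponding coefficients still cancel in the derivative computation for (c) and that the Jensen weights are still uniform $1/n$ for (a). As noted above, the sum of all $n$ arguments is exactly $\xi$ regardless of the $y_j$'s, so both properties persist. Alternatively, one can write a formal induction on $n$ by splitting off the last summand and applying the inductive hypothesis to a suitably regrouped expression, but the direct argument above is shorter and more transparent.
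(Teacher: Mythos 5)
Your proof is correct. The key observation you isolate — that the $n$ vectors $\xi/n - ty_1,\dots,\xi/n - ty_{n-1}, \xi/n + t\sum_j y_j$ average to $\xi/n$ for all $t$, so Jensen's inequality with uniform weights $1/n$ applies cleanly to $\vphi$, to $\psi$, and to $\psi-\vphi$ — is exactly what drives (a), and your handling of (b)--(f) mirrors the $n=2$ case without any hidden gaps. The paper omits the proof and only remarks that it can be carried out by induction on $n$; you instead give a direct argument treating all $n$ summands at once, which is shorter and avoids the regrouping bookkeeping that an inductive step would require. Both routes are entirely natural adaptations of Lemma \ref{convexity}; the direct one you favor is, if anything, more transparent. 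One small addition you could make explicit in (d): you should note that the term corresponding to a nonzero $y_j$ is strictly convex in $t$ because a strictly convex function composed with a nonconstant affine map of a scalar variable is strictly convex, and adding the remaining (merely convex) terms preserves strict convexity — but this is exactly the implicit reasoning and it holds.
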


\noindent An $n$-linear version of Theorem \ref{comparison-convolution} would follow from satisfactory substitutes for Lemmata \ref{transformation} and \ref{contractive}. The latter is more intricate if $n\geq 3$, and the authors have not investigated the extent to which the argument would need to be changed.
\end{remark}

\section{Optimal constants and nonexistence of extremizers}\label{sec:NoExt}

This chapter is devoted to the proof of Theorem \ref{main}. 
In what follows,
 the function $\phi:\R^2\to\R$ is assumed to be nonnegative, 
twice continuously differentiable and strictly convex, 
 $\sigma$ denotes projection measure on the surface $\Sigma_\phi\subset\R^3$, and $\psi=|\cdot|^2+\phi$.
 We start by stating two lemmata which explore the connection between pointwise values of the 
 convolution measure $\sigma\ast\sigma$, and concentration at a point.

\begin{lemma}
 \label{upper-bound-concentration}
 Let $y_0\in\R^2$ be given, and let $\{f_n\}\subset L^2(\R^2)$ be a sequence concentrating at $y_0$. Then 
  \begin{equation}
  \label{value_boundary}
  \limsup_{n\to\infty}\frac{\norma{f_n\sigma\ast f_n\sigma}_{L^2(\R^{3})}^2}{\norma{f_n}_{L^2(\R^2)}^4}\leq (\sigma\ast\sigma)(2y_0,2\psi(y_0)).
 \end{equation}
\end{lemma}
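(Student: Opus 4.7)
The plan is to combine a pointwise Cauchy--Schwarz bound for $f_n\sigma\ast f_n\sigma$ with the continuity of $\sigma\ast\sigma$ at the boundary point $(2y_0,2\psi(y_0))$ established in Proposition \ref{ConvolFormulaProp}(d), together with the global bound $\norma{\sigma\ast\sigma}_{L^\infty}<\infty$ supplied by Theorem \ref{comparison-convolution}.

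First I would view $(f_n\sigma\ast f_n\sigma)(\xi,\tau)$ as the integral of $y\mapsto f_n(y)f_n(\xi-y)$ against the singular measure $d\mu_{\xi,\tau}(y):=\ddirac{\tau-\psi(y)-\psi(\xi-y)}\d y$ on $\R^2$, which carries total mass $(\sigma\ast\sigma)(\xi,\tau)$. The Cauchy--Schwarz inequality in $L^2(d\mu_{\xi,\tau})$ then immediately yields the pointwise bound
\[
\ab{(f_n\sigma\ast f_n\sigma)(\xi,\tau)}^2
\leq (\sigma\ast\sigma)(\xi,\tau)\cdot(\ab{f_n}^2\sigma\ast\ab{f_n}^2\sigma)(\xi,\tau),
\]
and integration in $(\xi,\tau)$ gives
\[
\norma{f_n\sigma\ast f_n\sigma}_{L^2}^2
\leq \int_{\R^3}(\sigma\ast\sigma)(\xi,\tau)\,(\ab{f_n}^2\sigma\ast\ab{f_n}^2\sigma)(\xi,\tau)\d\xi\d\tau,
\]
while $\int_{\R^3}(\ab{f_n}^2\sigma\ast\ab{f_n}^2\sigma)\d\xi\d\tau=\norma{f_n}_{L^2}^4$.

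Next I would verify that the nonnegative measure $(\ab{f_n}^2\sigma\ast\ab{f_n}^2\sigma)\d\xi\d\tau$ concentrates at $(2y_0,2\psi(y_0))$, in the sense that for each $\rho>0$ the mass it places outside the $\rho$-ball about that point is $o(\norma{f_n}_{L^2}^4)$. Unwinding the convolution, this mass equals
\[
\iint_{A_\rho}\ab{f_n(y)}^2\ab{f_n(y')}^2\d y\d y',
\qquad
A_\rho:=\bigl\{(y,y'):\ab{(y+y',\psi(y)+\psi(y'))-(2y_0,2\psi(y_0))}\geq\rho\bigr\}.
\]
Continuity of $\psi$ at $y_0$ furnishes $\delta>0$ such that $\ab{y-y_0},\ab{y'-y_0}<\delta$ forces $(y,y')\notin A_\rho$, so $A_\rho\subset\{\ab{y-y_0}\geq\delta\}\cup\{\ab{y'-y_0}\geq\delta\}$, and the double integral is bounded by $2\norma{f_n}_{L^2}^2\int_{\{\ab{y-y_0}\geq\delta\}}\ab{f_n(y)}^2\d y=o(\norma{f_n}_{L^2}^4)$ by the concentration hypothesis on $\{f_n\}$.

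Finally, given $\eps>0$, continuity of $\sigma\ast\sigma$ at the boundary point $(2y_0,2\psi(y_0))$ supplies a neighborhood $U$ on which $(\sigma\ast\sigma)\leq(\sigma\ast\sigma)(2y_0,2\psi(y_0))+\eps$; splitting the integral in the Cauchy--Schwarz bound over $U$ and $U^c$, the piece over $U$ is at most $\bigl((\sigma\ast\sigma)(2y_0,2\psi(y_0))+\eps\bigr)\norma{f_n}_{L^2}^4$, while the piece over $U^c$ is controlled by $\norma{\sigma\ast\sigma}_{L^\infty}\cdot o(\norma{f_n}_{L^2}^4)$. Dividing by $\norma{f_n}_{L^2}^4$, taking $\limsup_{n\to\infty}$, and then letting $\eps\to 0^+$ yields \eqref{value_boundary}. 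The only step of real substance is the concentration argument for the bilinear measure; the remainder is bookkeeping, and the pointwise Cauchy--Schwarz estimate is the unifying idea.
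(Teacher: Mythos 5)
Your proof is correct, and it follows the same two organizing ideas as the paper---the pointwise Cauchy--Schwarz bound $\ab{f_n\sigma\ast f_n\sigma}^2\leq(\ab{f_n}^2\sigma\ast\ab{f_n}^2\sigma)(\sigma\ast\sigma)$ and the continuity of $\sigma\ast\sigma$ up to the boundary from Proposition~\ref{ConvolFormulaProp}(d)---but organizes the final estimate a bit differently. The paper's intended argument (the sketch refers to the proof of Lemma~\ref{lemma-concentration-point}) decomposes $f_n$ into a cap $f_n\one_{E_r'}$ plus its complement, estimates the cross-terms and the tail-tail term using concentration, and bounds the cap-cap term by $\sup_{E_r+E_r}(\sigma\ast\sigma)$ before sending $r\to 0^+$. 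You instead keep $f_n$ intact, show directly that the positive measure $(\ab{f_n}^2\sigma\ast\ab{f_n}^2\sigma)\d\xi\d\tau$ concentrates at $(2y_0,2\psi(y_0))$ by pulling the complement of a $\rho$-ball back through the map $(y,y')\mapsto(y+y',\psi(y)+\psi(y'))$, and then split the $(\xi,\tau)$-integral over a neighborhood $U$ of $(2y_0,2\psi(y_0))$ and its complement. The net effect is the same inequality, but your route avoids the cross-term bookkeeping entirely and makes the role of the two inputs (concentration gives the measure-theoretic localization, continuity gives the value of $\sigma\ast\sigma$ at the limit point) more transparent. One small remark: the uniform bound $\norma{\sigma\ast\sigma}_{L^\infty}<\infty$ that you use to control the integral over $U^\complement$ is cleanest to cite from formula~\eqref{ConvolutionFormula}/\eqref{bdryvaluesDet} and positive semi-definiteness of $H(\phi)$ (as the paper does in the proof of Theorem~\ref{main}), rather than from Theorem~\ref{comparison-convolution}; but either works, since the comparison principle does yield $\norma{\sigma\ast\sigma}_{L^\infty}\leq\norma{\sigma_0\ast\sigma_0}_{L^\infty}=\tfrac{\pi}{2}$.
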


\begin{lemma}
 \label{lemma-concentration-point}
 Let $y_0\in\R^2$ be given, and let $f_n(y)=e^{-n(\psi(y)-\psi(y_0)-\langle\nabla\psi(y_0),y-y_0\rangle)}$. Then the sequence 
$\{f_n/\norma{f_n}_{L^2}\}$ concentrates at $y_0$, and
 \begin{equation}
  \label{concentration_value_boundary}
  \lim_{n\to\infty}\frac{\norma{f_n\sigma\ast f_n\sigma}_{L^2(\R^3)}^2}{\norma{f_n}_{L^2(\R^2)}^4}=(\sigma\ast\sigma)(2y_0,2\psi(y_0)).
 \end{equation}
\end{lemma}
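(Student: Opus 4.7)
The proof naturally splits into three parts: concentration at $y_0$, the upper bound on the ratio, and the matching lower bound.

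For concentration, let $\Phi(y):=\psi(y)-\psi(y_0)-\langle\nabla\psi(y_0),y-y_0\rangle$ be the Bregman divergence of $\psi$ at $y_0$. Expanding $\psi=|\cdot|^2+\phi$, a direct computation gives $\Phi(y)=|y-y_0|^2+[\phi(y)-\phi(y_0)-\langle\nabla\phi(y_0),y-y_0\rangle]\geq|y-y_0|^2$, the bracketed term being nonnegative by convexity of $\phi$. Hence $f_n(y)^2\leq e^{-2n|y-y_0|^2}$. Laplace's method yields $\|f_n\|_{L^2(\R^2)}^2\sim\pi\bigl(n\sqrt{\det H(\psi)(y_0)}\bigr)^{-1}$ as $n\to\infty$. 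For each $\rho>0$, strict convexity gives $\inf_{|y-y_0|\geq\rho}\Phi(y)\geq c_\rho>0$, and the splitting $e^{-2n\Phi}=e^{-n\Phi}e^{-n\Phi}\leq e^{-nc_\rho}e^{-n\Phi}$ on $\{|y-y_0|\geq\rho\}$ yields $\int_{|y-y_0|\geq\rho}f_n^2\,\d y\leq e^{-nc_\rho}\int e^{-n\Phi}\,\d y$, which is exponentially smaller than $\|f_n\|_{L^2}^2$. Concentration at $y_0$ follows.

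The upper bound is then immediate from Lemma \ref{upper-bound-concentration}:
$$\limsup_{n\to\infty}\frac{\|f_n\sigma\ast f_n\sigma\|_{L^2}^2}{\|f_n\|_{L^2}^4}\leq(\sigma\ast\sigma)(2y_0,2\psi(y_0)).$$
For the matching lower bound I would rescale $y=y_0+n^{-1/2}z$, $y'=y_0+n^{-1/2}z'$, $\xi=2y_0+n^{-1/2}\zeta$ in the integral representation
\begin{align*}
\|f_n\sigma\ast f_n\sigma\|_{L^2}^2=\int_{\R^6}&f_n(y)f_n(\xi-y)f_n(y')f_n(\xi-y')\\
&\times\delta\bigl(\psi(y)+\psi(\xi-y)-\psi(y')-\psi(\xi-y')\bigr)\,\d y\,\d y'\,\d\xi.
\end{align*}
Setting $g_n(z):=f_n(y_0+n^{-1/2}z)$ and $H:=H(\psi)(y_0)$, the pointwise convergence $g_n\to g:=\exp(-\tfrac12\langle\cdot,H\cdot\rangle)$ and the global bound $g_n(z)\leq e^{-|z|^2}$ supply an integrable envelope. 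A third-order Taylor expansion shows that the constant and linear pieces of the $\delta$-argument cancel, leaving $n^{-1}[Q_0(z,z',\zeta)+o(1)]$, where $Q_0$ is the analogous quadratic expression built from the form $\tfrac12\langle\cdot,H\cdot\rangle$. Tracking the Jacobian $n^{-3}$ and the one-dimensional $\delta$-rescaling factor $n$, dominated convergence yields $n^2\|f_n\sigma\ast f_n\sigma\|_{L^2}^2\to\|g\mu\ast g\mu\|_{L^2}^2$ and $n\|f_n\|_{L^2}^2\to\|g\|_{L^2}^2$, where $\mu$ is projection measure on the osculating paraboloid $\{(z,\tfrac12\langle z,Hz\rangle):z\in\R^2\}$. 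The linear change of variables $u=Az$ with $2A^2=H$ reduces this ratio to the Gaussian-on-standard-paraboloid ratio computed by Foschi \cite{F}, whose value works out to $\pi/\sqrt{\det H}$. By Proposition \ref{ConvolFormulaProp}(d) this equals $(\sigma\ast\sigma)(2y_0,2\psi(y_0))$, completing the argument.

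The principal obstacle is passing to the limit inside the singular $\delta$-integral: one must control the $o(1)$ error in the Taylor expansion uniformly over the effective support of the integrand, namely the region where the Gaussian envelope is not negligible. The global bound $g_n\leq e^{-|z|^2}$ provides the decay in $(z,z',\zeta)$, while the $C^2$ regularity of $\phi$ guarantees the pointwise smallness of the remainder; a standard argument truncating the integration to a large compact set then transferring estimates back makes the convergence rigorous.
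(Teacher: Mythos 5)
Your proof is correct in spirit but takes a genuinely different route from the paper, and the difference is worth understanding.

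The paper's lower bound hinges on a one-line algebraic observation that you do not use: because $\gamma(y)=\psi(y)-\psi(y_0)-\langle\nabla\psi(y_0),y-y_0\rangle$ is the restriction to $\Sigma_\phi$ of the \emph{affine} function $(\xi,\tau)\mapsto\tau-\psi(y_0)-\langle\nabla\psi(y_0),\xi-y_0\rangle$, the product $f_n\sigma\ast f_n\sigma$ factors exactly as $e^{-n\ell(\xi,\tau)}(\sigma\ast\sigma)(\xi,\tau)$ for an explicit affine $\ell$, whence the pointwise \emph{identity}
\[
(f_n\sigma\ast f_n\sigma)^2=(f_n^2\sigma\ast f_n^2\sigma)\,(\sigma\ast\sigma).
\]
That is, for this particular family the Cauchy--Schwarz estimate underlying Lemma \ref{upper-bound-concentration} is saturated. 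The paper then integrates this identity, splits into a cap $E_r$ about $(y_0,\psi(y_0))$ and its complement, kills the cross and complement terms using concentration, and sandwiches the main term between $\inf_{E_r+E_r}(\sigma\ast\sigma)$ and $\sup_{E_r+E_r}(\sigma\ast\sigma)$; continuity of $\sigma\ast\sigma$ up to the boundary (Proposition \ref{ConvolFormulaProp}(d)) and $r\to0^+$ finish both directions at once. In particular the paper never rescales, never Taylor-expands the $\delta$-argument, and never appeals to Foschi's sharp constant.

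Your lower bound is instead a blow-up argument: rescale by $n^{-1/2}$ about $y_0$, pass to the osculating paraboloid, identify the limiting ratio with the Gaussian-on-paraboloid functional of Foschi, and match $\pi/\sqrt{\det H(\psi)(y_0)}$ against \eqref{bdryvaluesDet}. The endpoint arithmetic checks out (with $2A^2=H$ one gets $(\det A)^{-1}\cdot\tfrac{\pi}{2}=\pi/\sqrt{\det H}$, so your invocation of Foschi is used correctly). This is conceptually attractive — it exposes the osculating-paraboloid mechanism directly — but it is appreciably heavier to make rigorous, precisely at the step you flag: passing $n\to\infty$ inside $\delta\bigl(n^{-1}[Q_0+o(1)]\bigr)$. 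Dominated convergence does not apply to the Gaussian envelope alone; one must solve the constraint for one coordinate and control, uniformly in $n$ and over the effective support, the Jacobian $1/|\partial(\cdot)|$ of the implicitly defined variable. That factor degenerates where the constraint hypersurface is tangent to a coordinate plane, and while this can be handled (switch the solved-for variable by region, or use the coarea formula with a transversality estimate), it is not the "standard truncation argument" your last paragraph suggests; some genuine work remains there. One small inaccuracy in the same vein: since $\phi$ is only assumed $C^2$, the Taylor remainder is $o(|h|^2)$ rather than the $O(|h|^3)$ that a "third-order Taylor expansion" would give; fortunately $o(|h|^2)$ is all you need. Finally, note that the paper does not actually invoke Lemma \ref{upper-bound-concentration} for the $\limsup$ direction — the cap decomposition gives both bounds simultaneously — whereas your split into an invocation of that lemma plus a separate blow-up lower bound is a clean but structurally different organization.

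In short: the approach is sound and the final constant matches the paper's, but the paper's exact factorization is the more elementary and more robustly rigorous path, and you should either adopt it or genuinely close the $\delta$-limit gap rather than deferring it.
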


\begin{proof}[Proof of Lemma \ref{lemma-concentration-point}]
We first prove that the given sequence concentrates at $y_0$. With that purpose in mind, fix $\rho>0$. The function 
$$\gamma(y):=\psi(y)-\psi(y_0)-\langle\nabla\psi(y_0),y-y_0\rangle$$
 satisfies $\gamma\geq 0$, $\gamma(y_0)=0$, 
$\nabla\gamma(y_0)=0$ and $H(\gamma)(y_0)=2I+H(\phi)(y_0)$. It follows that, for any  sufficiently small $\eps>0$, there exists $r=r_\eps>0$ such that the inequality
\[\gamma(y)\leq (1+\eps)\Big(\ab{y-y_0}^2+\frac{1}{2}\langle 
y-y_0,H(\phi)(y_0)\cdot (y-y_0)\rangle\Big)\]
holds, for every $y\in\R^2$ satisfying 
$\ab{y-y_0}\leq r$. The $L^2$ norm of the function $f_n$ can be bounded from below as follows:
\begin{align*}
\norma{f_n}_{L^2}^2&
=\int_{\R^2}e^{-2n\gamma(y)}\d y
\geq \int_{\{\ab{y-y_0}\leq r\}}e^{-2n(1+\eps)\big(\ab{y-y_0}^2+\frac{1}{2}\langle y-y_0,H(\phi)(y_0)\cdot (y-y_0)\rangle\big)}\d y\\
&=\int_{\{\ab{y}\leq r\}}e^{-2n(1+\eps)\langle y,A\cdot y\rangle}\d y
\geq\frac 1{(\det A)^{\frac12}}\int_{\{\ab{y}\leq \alpha r\}}e^{-2n(1+\eps)\ab{y}^2}\d y\\
&=\frac{2\pi}{(\det A)^{\frac12}}\frac{1-e^{-2n(1+\eps)\alpha^2r^2}}{4n(1+\eps)},
\end{align*}
where the (positive-definite) matrix $A$ is given by $A=I+\frac{1}{2}H(\phi)(y_0)$, and $\alpha>0$ denotes the square root of the smallest eigenvalue of $A$. Noting that 
$$\gamma(y)=\ab{y-y_0}^2+\phi(y)-\phi(y_0)-\langle\nabla\phi(y_0),y-y_0\rangle\geq \ab{y-y_0}^2,$$ 
we obtain 
\begin{align*}
 \int_{\{\ab{y-y_0}\geq \rho\}}\ab{f_n(y)}^2\d y
 \leq\int_{\{\ab{y-y_0}\geq \rho\}} e^{-2n\ab{y-y_0}^2}\d y
 =2\pi\frac{e^{-2n\rho^2}}{4n}.
\end{align*}
Therefore
\begin{align*}
\norma{f_n}_{L^2}^{-2} {\int_{\{\ab{y-y_0}\geq \rho\}}\ab{f_n(y)}^2\d y}\leq 
(1+\eps)(\det A)^{\frac12}\frac{e^{-2n\rho^2}}{1-e^{-2n(1+\eps)\alpha^2r^2}}\to 0,
\end{align*}
as $n\to\infty$, as had to be shown.
We now turn to the proof of identity \eqref{concentration_value_boundary}. Start by noting that the function 
$\gamma$ equals the restriction of the linear affine function 
$$(\xi,\tau)\mapsto\tau-\psi(y_0)-\langle\nabla\psi(y_0),\xi-y_0\rangle$$ 
to the surface $\Sigma_\phi\subset \R^3$.
It follows that
\[(f_n\sigma\ast f_n\sigma)(\xi,\tau)=e^{-n(\tau-\langle\nabla\psi(y_0), 
\xi\rangle)}e^{2n(\psi(y_0)-\langle\nabla\psi(y_0), y_0\rangle)}(\sigma\ast\sigma)(\xi,\tau),\]
which in turn implies the pointwise identity
\begin{equation}\label{convolutionfactors}
(f_n \sigma\ast f_n\sigma)^2=(f_n^2\sigma\ast f_n^2\sigma)(\sigma\ast\sigma).
\end{equation}
Given $r>0$, let 
$$E_r:=\{(y,\psi(y))\in\R^{2+1}|\,y\in B_r(y_0)\}\subset\Sigma_\phi$$
 denote the cap of radius $r$ and center $(y_0,\psi(y_0))$ on the surface $\Sigma_\phi$. From identity \eqref{convolutionfactors}, it follows that
\begin{align*}
 \norma{f_n\sigma\ast f_n\sigma}_{L^2(\R^3)}^2
 &=\int_{\R^3}\bigl(f_n^2\one_{E_r}\sigma\ast f_n^2\one_{E_r}\sigma\bigr)(\xi,\tau) \bigl(\sigma\ast\sigma\bigr)(\xi,\tau)\d\xi \d\tau\\
 &\quad+\int_{\R^3}\bigl(f_n^2\one_{E_r^\complement}\sigma\ast f_n^2\one_{E_r^\complement}\sigma\bigr)(\xi,\tau)\bigl(\sigma\ast\sigma\bigr)(\xi,\tau)\d\xi \d\tau\\
 &\quad\quad+2\int_{\R^3}\bigl(f_n^2\one_{E_r}\sigma\ast f_n^2\one_{E_r^\complement}\sigma\bigr)(\xi,\tau)\bigl(\sigma\ast\sigma\bigr)(\xi,\tau)\d\xi \d\tau,
\end{align*}
where $E_r^\complement$ stands for the complement of the set $E_r$ in $\Sigma_\phi$.
Dividing by $\norma{f_n}_{L^2}^4$, we can bound the last summand by
\begin{multline*}
\norma{f_n}_{L^2}^{-4}\int_{\R^3}\bigl(f_n^2\one_{E_r}\sigma\ast f_n^2\one_{E_r^\complement}\sigma\bigr)(\xi,\tau)\bigl(\sigma\ast\sigma\bigr)(\xi,\tau)\d\xi \d\tau\\
\leq\sup_{(\xi,\tau)\in 
\R^3}(\sigma\ast\sigma)(\xi,\tau)\frac{\norma{f_n\one_{E_r'}}_{L^2}^2}{\norma{f_n}_{L^2}^{2}}\frac{\norma{f_n\one_{E_r'^\complement}}_{L^2}
^2 } {\norma{f_n}_{L^2}^{2}},
\end{multline*}
where
$E_r':=B_r(y_0)\subset 
\R^2$, and $E_r'^\complement$ stands for the complement of the set $E_r'$ in $\R^2$.
The right-hand side of this inequality tends to zero, as $n\to\infty$, because the sequence $\{f_n/\|f_n\|_{L^2}\}$ concentrates at the point $y_0$. The second summand can be treated in an analogous way.
The first summand, after appropriate normalization, is bounded from above by
\begin{multline*}
 \norma{f_n}_{L^2}^{-4}\int_{\R^3}
 \bigl(f_n^2\one_{E_r}\sigma\ast f_n^2\one_{E_r}\sigma\bigr)(\xi,\tau)
 \bigl(\sigma\ast\sigma\bigr)(\xi,\tau)\d\xi \d\tau\\
 \leq 
\frac{\norma{f_n\one_{E_r'}}_{L^2}^4}{\norma{f_n}_{L^2}^{4}}
\sup_{(\xi,\tau)\in E_r+E_r}(\sigma\ast\sigma)(\xi,\tau) ,
\end{multline*}
and from below by
\begin{multline*}
\norma{f_n}_{L^2}^{-4}\int_{\R^3}
\bigl(f_n^2\one_{E_r}\sigma\ast f_n^2\one_{E_r}\sigma\bigr)(\xi,\tau)
\bigl(\sigma\ast\sigma\bigr)(\xi,\tau)\d\xi \d\tau\\
\geq \frac{\norma{f_n\one_{E_r'}}_{L^2}^4}{\norma{f_n}_{L^2}^{4}}
\inf_{(\xi,\tau)\in E_r+E_r}(\sigma\ast\sigma)(\xi,\tau).
\end{multline*}
Since $\norma{f_n\one_{E_r'}}_{L^2}/\norma{f_n}_{L^2}\to 1$, as $n\to\infty$, we obtain
\[\limsup_{n\to\infty}\frac{\norma{f_n\sigma\ast f_n\sigma}_{L^2(\R^3)}^2}{\norma{f_n}_{L^2(\R^2)}^4}\leq \sup_{(\xi,\tau)\in 
E_r+E_r}(\sigma\ast\sigma)(\xi,\tau),\]
and
\[\liminf_{n\to\infty}\frac{\norma{f_n\sigma\ast f_n\sigma}_{L^2(\R^3)}^2}{\norma{f_n}_{L^2(\R^2)}^4}\geq \inf_{(\xi,\tau)\in 
E_r+E_r}(\sigma\ast\sigma)(\xi,\tau).\]
Identity \eqref{concentration_value_boundary}  follows because the convolution $\sigma\ast\sigma$ defines a continuous function up to the boundary of its support, and $r>0$ was 
arbitrary. Taking $r\to 0^+$ finishes the proof.
\end{proof}

\begin{proof}[Sketch of proof of Lemma \ref{upper-bound-concentration}] 
Integrate the pointwise bound
$$\ab{(f\sigma\ast f\sigma)(\xi,\tau)}^2\leq 
\bigl(\ab{f}^2\sigma\ast\ab{f}^2\sigma\bigr)(\xi,\tau) \bigl(\sigma\ast\sigma\bigr)(\xi,\tau),$$
which was observed in \cite{F,OS,Q} to hold almost everywhere, and proceed as in the proof of the corresponding inequality in Lemma \ref{lemma-concentration-point}.
\end{proof}

\begin{proof}[Proof of Theorem \ref{main}]
As in the proof of Lemma \ref{upper-bound-concentration}, the Cauchy--Schwarz inequality implies 
\begin{equation}\label{firstCS}
\norma{f\sigma\ast f\sigma}^2_{L^2(\R^{3})}\leq \norma{\sigma\ast\sigma}_{L^\infty(\R^3)}\norma{f}_{L^2(\R^2)}^4.
\end{equation}
 It follows that (a possibly non-sharp version of) inequality \eqref{2-convolution-2} holds, as long as the $L^\infty$ norm of the convolution  $\sigma\ast\sigma$ is finite. This, in turn, can be seen using  
identity \eqref{ConvolutionFormula}, since the Hessian of $\psi$ satisfies $H(\psi)=2I+H(\phi)$, and the matrix $H(\phi)(x)$ is positive semidefinite, for every $x\in\R^2$.  
Estimate \eqref{firstCS} also shows that the optimal constant in inequality \eqref{2-convolution-2} satisfies 
$$\mathcal R^4_\phi\leq \norma{\sigma\ast\sigma}_{L^\infty}.$$
Now, let $\sigma_0$ denote the projection measure on the paraboloid $\Sigma_0$.
 From Theorem \ref{comparison-convolution} and Remark \ref{Foschi}, we know that $\norma{\sigma\ast\sigma}_{L^\infty}\leq\norma{\sigma_0\ast\sigma_0}_{L^\infty}=\frac{\pi}2$.  That these two quantities are actually the same follows from the fact that the convolution $\sigma\ast\sigma$ attains the value $\pi/2$  at the boundary point $(2y_0,2\psi(y_0))$ in case (i), or at infinity in case (ii).
 Identity \eqref{ValueS} will then follow from the inequality
 \begin{equation}\label{Sgeq}
 \mathcal R^4_\phi\geq \frac \pi 2,
 \end{equation}
 which we establish using the sequences given by \eqref{extseq}. We consider the two cases separately. In case (i), since $H(\phi)(y_0)=0$, it follows from identity \eqref{bdryvaluesDet} that \mbox{$(\sigma\ast\sigma)(2y_0,2\psi(y_0))=\frac{\pi}2$}, and therefore the sequence $\{f_n/\|f_n\|_{L^2}\}$, where
 $$f_n(y)=e^{-n(\psi(y)-\psi(y_0)-\langle\nabla\psi(y_0),y-y_0\rangle)},$$
 is extremizing for inequality \eqref{2-convolution-2} in light of Lemma \ref{lemma-concentration-point}. In case (ii), we have that 
$(\sigma\ast\sigma)(2y_n,2\psi(y_n))\to\frac{\pi}2$, as $n\to\infty$. Choose a sequence $\{a_n\}\subset\N$ in such a way that, for every $n\in\N$, the function given by 
$$f_n(y)=e^{-a_n(\psi(y)-\psi(y_n)-\langle\nabla\psi(y_n),y-y_n\rangle)}$$
satisfies
 \[\left\vert{\frac{\norma{f_{n}\sigma\ast 
f_{n}\sigma}_{L^2}^2}{\norma{f_{n}}_{L^2}^4}-(\sigma\ast\sigma)(2y_n,2\psi(y_n))}\right\vert\leq\frac{1}{n},\]
and
\begin{equation}\label{conctrinfty}
\int_{\{|y-y_n|\geq \frac1n\}} |f_{n}(y)|^2 \d y \leq \frac{1}{n} \|f_{n}\|_{L^2}^2.
\end{equation}
That this is possible follows again from Lemma \ref{lemma-concentration-point}. Since 
$$\frac{\norma{f_{n}\sigma\ast 
f_{n}\sigma}_{L^2}^2}{\norma{f_{n}}_{L^2}^4}\to\frac\pi 2, \textrm{ as } n\to\infty,$$
the sequence $\{f_n/\|f_n\|_{L^2}\}$ is again extremizing for inequality  \eqref{2-convolution-2}.  
 This establishes \eqref{Sgeq} in both cases (i) and (ii), and therefore identity \eqref{ValueS} is proved.
 Incidentally, note that condition \eqref{conctrinfty} ensures that  $\{f_n/\|f_n\|_{L^2}\}$
concentrates along the sequence $\{y_n\}$. Since $|y_n|\to\infty$, as $n\to\infty$, it concentrates at infinity.
 
\noindent We finish by showing that extremizers for inequality \eqref{2-convolution-2} do not exist. Aiming at a contradiction, let $f$ be an extremizer. An application of Cauchy--Schwarz and H\"older's inequalities yields 
\begin{align*}
{\mathcal R_\phi^4} \norma{f}_{L^2}^4
&=\norma{f\sigma\ast f\sigma}_{L^2}^2\\
&\leq \int_{\R^3} |(f^2\sigma\ast f^2\sigma)(\xi,\tau)|(\sigma\ast \sigma)(\xi,\tau)\d\xi \d\tau\\
&\leq \|\sigma\ast\sigma\|_{L^\infty}\int_{\R^3} |(f^2\sigma\ast f^2\sigma)(\xi,\tau)|\d\xi \d\tau\\
&= \|\sigma\ast\sigma\|_{L^\infty} \|f\|_{L^2}^4.
 \end{align*}
 Since $\mathcal R_\phi^4=\|\sigma\ast\sigma\|_{L^\infty}=\frac{\pi}2$ and $f\neq 0$, all inequalities in this chain of inequalities must be equalities. In 
particular, the convolution $\sigma\ast\sigma$ must be constant equal to $\|\sigma\ast\sigma\|_{L^\infty}$ almost everywhere inside the support of 
 $f^2\sigma\ast f^2\sigma$, which is a set of positive Lebesgue measure since $f\neq 0$. This contradicts the strict inequality 
 \begin{equation*}
(\sigma\ast\sigma)(\xi, \tau)<\norma{\sigma\ast\sigma}_{L^\infty},\textrm{ for almost every }(\xi, \tau)\in\textrm{supp} (\sigma\ast\sigma),
\end{equation*}
 which in turn is an immediate consequence of the second part of Theorem \ref{comparison-convolution}. This contradiction shows that extremizers do not exist. The proof of the theorem is now complete.
\end{proof}

\section{On extremizing sequences}
\label{sec:BehaviorExtSeq}

From the previous chapter, we know that extremizers for inequality \eqref{2-convolution-2}  do not exist. As mentioned in the Introduction, this failure of compactness can be understood via the concentration-compactness principle, which is the subject of the present chapter.
Heuristically, an extremizing sequence  for inequality \eqref{2-convolution-2}  should concentrate around the points where the function $\sigma\ast\sigma$ 
achieves its essential supremum.
Lemma \ref{upper-bound-concentration} and formula \eqref{bdryvaluesDet} imply that, if an extremizing sequence concentrates at a point 
$y_0$, then necessarily $H(\phi)(y_0)=0$. Lemma \ref{lemma-concentration-point} provides the construction of  an explicit extremizing sequence 
concentrating at any point $y_0\in\R^2$, provided $H(\phi)(y_0)=0$. Therefore, concentration occurs at a point if and only if the Hessian vanishes at that point. 
Further information concerning extremizing sequences concentrating at spatial infinity will be obtained below.

\subsection{Weak interaction between distant caps}\label{sec:widc}

Reasoning in a similar way to the proof of \eqref{ConvolutionFormula} from Proposition \ref{ConvolFormulaProp}, we find that the identity
\begin{equation}\label{ConvFandG}
(f\sigma\ast g\sigma)(\xi,\tau)
=
\int_{\mathbb{S}^1}\frac{f(\xi/2+\alpha(\xi,\tau, \omega)\omega)g(\xi/2-\alpha(\xi,\tau, \omega)\omega)}{
\int_{-1}^1
\langle \omega, 
H(\psi)(\xi/ 2+t\alpha(\xi,\tau, \omega)\omega)
\cdot \omega\rangle
\d t}
\d\mu_\omega
\end{equation}
holds, in particular, in the case when $f,g$ are indicator functions of balls or their complements.
Formula \eqref{ConvFandG} allows for a quantification of the general principle that ``distant caps interact weakly''. This is a geometric feature that translates into useful bilinear estimates, and has been observed in a variety of related contexts; see, for instance, \cite{CS, OS2}. The precise  statement is as follows.

\begin{lemma}
 \label{WeakInteraction}
 Let $r,\rho>0$ satisfy $\rho>3r$. Then, for any $y_0\in\R^2$,
 \begin{equation}\label{weakinterac}
 \norma{\one_{B_r(y_0)}\sigma\ast \one_{B^\complement_\rho(y_0)}\sigma}_{L^\infty(\R^3)}\leq \frac 12\arcsin\Bigl(\frac{2r}{\rho-r}\Bigr).
 \end{equation}
 As a result, the following statements hold:
 \begin{itemize}
  \item[(a)] \label{w-1} For any  $r>0$ and $y_0\in\R^2$, 
 \begin{equation}
  \label{weak_interaction_balls}
  \lim_{\rho\to\infty}\norma{\one_{B_r(y_0)}\sigma\ast \one_{B^\complement_\rho(y_0)}\sigma}_{L^\infty(\R^3)}= 0.
 \end{equation}
 \item[(b)]\label{w-2} For any $\rho>0$ and $y_0\in\R^2$, 
 \[\lim_{r\to0^+}\norma{\one_{B_r(y_0)}\sigma\ast \one_{B^\complement_\rho(y_0)}\sigma}_{L^\infty(\R^3)}= 0.\]
 \item[(c)] \label{w-3} 
  For any $r>0$, 
  \begin{equation}
  \label{weak_interaction_balls_v2}
  \lim_{\rho\to\infty}\sup_{y\in\R^2}\norma{\one_{B_r(y)}\sigma\ast \one_{B^\complement_\rho(y)}\sigma}_{L^\infty(\R^3)}= 0.
 \end{equation}
\end{itemize}
  Moreover,
    \begin{itemize}
  \item[(d)] If $B,B'\subseteq\R^2$ are disjoint balls, then
 \[\norma{\one_{B}\sigma\ast \one_{B'}\sigma}_{L^\infty(\R^3)}\leq \frac{\pi}{4}.\]
 \end{itemize}
\end{lemma}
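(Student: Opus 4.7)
The plan is to apply the representation formula \eqref{ConvFandG} with $f=\one_{B_r(y_0)}$ and $g=\one_{B^\complement_\rho(y_0)}$, and to bound both the integrand pointwise and the $\mu$-measure of its support on $\mathbb{S}^1$. Since $H(\psi)=2I+H(\phi)$ with $H(\phi)$ positive semidefinite, $\langle\omega,H(\psi)(\cdot)\omega\rangle\geq 2$ for every unit vector $\omega$, so the denominator in \eqref{ConvFandG} is at least $4$. Thus the integrand is bounded by $\tfrac14$ pointwise and vanishes outside the set
\[
A(\xi,\tau):=\bigl\{\omega\in\mathbb{S}^1:\,|a+\alpha(\xi,\tau,\omega)\omega|<r\text{ and }|a-\alpha(\xi,\tau,\omega)\omega|\geq \rho\bigr\},
\]
where $a:=\xi/2-y_0$.

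The next step will be to estimate $\mu(A(\xi,\tau))$. For any $\omega\in A(\xi,\tau)$, the triangle inequalities $|a-\alpha\omega|\leq|a|+\alpha$ and $|a+\alpha\omega|\geq||a|-\alpha|$, applied to the defining conditions, will force $|a|>(\rho-r)/2$; in particular $|a|>r$ under the hypothesis $\rho>3r$, so the origin lies strictly outside the ball $B_r(-a)$. The key observation is then that membership of $\alpha(\omega)\omega$ in $B_r(-a)$ only requires the ray from the origin in direction $\omega$ to meet $B_r(-a)$, a condition that depends on $\omega$ alone and not on the radial profile $\alpha(\omega)$. An elementary tangent-line computation will show that such rays form an arc of $\mathbb{S}^1$ of length $2\arcsin(r/|a|)\leq 2\arcsin(2r/(\rho-r))$, and combining this with the pointwise bound $\tfrac14$ will yield \eqref{weakinterac}.

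Parts (a) and (b) will be immediate consequences of \eqref{weakinterac}, since $\arcsin(2r/(\rho-r))\to 0$ as $\rho\to\infty$ with $r$ fixed, or as $r\to 0^+$ with $\rho$ fixed; and since the right-hand side of \eqref{weakinterac} is independent of $y_0$, (c) will follow from (a) without further work.

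For (d), I will symmetrize using the commutativity of convolution. Writing $D(\omega)$ for the denominator in \eqref{ConvFandG}, the identity $\one_B\sigma\ast\one_{B'}\sigma=\one_{B'}\sigma\ast\one_B\sigma$ combined with two applications of \eqref{ConvFandG} will give
\[
2(\one_B\sigma\ast\one_{B'}\sigma)(\xi,\tau)=\int_{\mathbb{S}^1}\frac{\one_B(\xi/2+\alpha\omega)\one_{B'}(\xi/2-\alpha\omega)+\one_{B'}(\xi/2+\alpha\omega)\one_B(\xi/2-\alpha\omega)}{D(\omega)}\,\d\mu_\omega.
\]
Disjointness of $B$ and $B'$ will ensure that for each $\omega$ at most one of the two products in the numerator is nonzero, so the integrand is at most $\tfrac14$; integrating over $\mathbb{S}^1$ of total mass $2\pi$ and dividing by $2$ will produce the bound $\tfrac\pi 4$ claimed in (d). The main obstacle is the arc-length estimate in the second paragraph: the crux is the geometric reduction to \emph{rays from the origin hitting $B_r(-a)$}, which makes the $\omega$-dependence of $\alpha$ irrelevant. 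Everything else is triangle-inequality bookkeeping together with the coercivity $H(\psi)\geq 2I$.
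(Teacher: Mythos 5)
Your proof is correct and follows essentially the same route as the paper's: bound the denominator in \eqref{ConvFandG} below by $4$, use the triangle inequality to force $|\xi/2-y_0|>(\rho-r)/2>r$, reduce to the arc of directions whose ray from the base point hits the ball of radius $r$ (length $2\arcsin(2r/(\rho-r))$), and deduce (a)--(c) immediately; the only stylistic divergence is in (d), where you symmetrize via commutativity of convolution instead of invoking, as the paper does, the evenness $\alpha(\xi,\tau,-\omega)=\alpha(\xi,\tau,\omega)$, but the two devices encode the same half-circle observation and give the same bound $\pi/4$.
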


\begin{proof}
We establish identity \eqref{weakinterac} for $y_0=0$ only, the case of general $y_0\in\R^2$ being similar. Let $\rho>r>0$. If $f=\one_{B_r}$ 
and $g=\one_{B^\complement_\rho}$, then the integrand in 
\eqref{ConvFandG} is nonzero only if the point $(\xi,\tau)$ satisfies
\[ \xi/2+\alpha(\xi,\tau, \omega)\omega\in B_r,\,\text{ and }\, \xi/2-\alpha(\xi,\tau, \omega)\omega\notin B_\rho.\]
By the triangle inequality, this can only happen if
\begin{equation}
 \label{LowerBoundXi}
 \ab{\xi}\geq \ab{\xi/2-\alpha(\xi,\tau, \omega)\omega}-\ab{\xi/2+\alpha(\xi,\tau, \omega)\omega}\geq \rho-r.
\end{equation}
In this case, if $\rho>3r$, then  $\ab{\xi/2}>r$, and therefore the ray $\xi/2+t\omega$, $t>0$, intersects the ball $B_r$ only if 
$\omega$ belongs to an arc of $\mathbb S^1$ 
of measure 
exactly $2\arcsin(\frac{2r}{\ab{\xi}})$.
Denoting  arc length measure on the unit circle by $\mu$ as usual, we conclude that 
\begin{multline*}
\mu(\{\omega\in \mathbb S^1: \xi/2+\alpha(\xi,\tau, \omega)\omega\in B_r,\,\xi/2-\alpha(\xi,\tau, \omega)\omega\notin B_\rho\})\\
\leq 2\arcsin\Big(\frac{2r}{\ab{\xi}}\Big)
\leq 2\arcsin\Big(\frac{2r}{\rho-r}\Big).
\end{multline*}
 It follows that, for every $(\xi,\tau)\in\R^3$,
\[(\one_{B_r}\sigma\ast \one_{B^\complement_\rho}\sigma)(\xi,\tau)\leq \frac{1}{2}\arcsin\Big(\frac{2r}{\rho-r}\Big),\]
where we bounded the denominator in \eqref{ConvFandG} from below by $4$. Parts (a) and (b) follow at once, and a similar reasoning for $y_0\neq 0$ establishes (c).
For part (d), note that the definition \eqref{implicitlambda} of the function $\la$ implies  $\la(-w,\xi)=\la(w,\xi)$ for every $w,\xi$, and 
therefore the function $\alpha$ satisfies $\alpha(\xi,\tau,-\omega)=\alpha(\xi,\tau,\omega)$, for every \mbox{$\omega\in\mathbb S^1$}. It then follows that, if $\xi/2+\alpha(\xi,\tau, 
\omega)\omega\in B$ and $\xi/2-\alpha(\xi,\tau, \omega)\omega\in B'$, then \mbox{$\xi/2+\alpha(\xi,\tau,-\omega)(-\omega)\notin B$} and $\xi/2-\alpha(\xi,\tau,-\omega)(-\omega)\notin B'$. As a consequence, the subset of $\mathbb S^1$ where the integrand  
in \eqref{ConvFandG} is nonzero has measure bounded from above by $\pi$, and the result follows as before.
\end{proof}

\subsection{Concentration-compactness} 
The three lemmata in this section hold under the general hypotheses of Theorem \ref{main}, which for brevity will not be included in the corresponding statements. 

\begin{lemma}
 \label{improvement-implies-concentration}
 Under the hypotheses of Theorem \ref{main}, suppose that there exist a subset $X\subset(\R^2)^2$ and $\delta>0$  such that, for every $(y,z)\in X$,
 \begin{equation}
  \label{improved-estimate}
  \frac{(\sigma\ast\sigma)(y+z,\psi(y)+\psi(z))}{\|\sigma\ast \sigma\|_{L^\infty(\R^3)}}\leq 1-\delta.
 \end{equation}
 Let $\{f_n\}\subset L^2(\R^2)$ be any extremizing sequence for inequality \eqref{2-convolution-2}. Then
 \[\int_X \ab{f_n(y)}^2\ab{f_n(z)}^2\d y\d z\to0,\text{ as }n\to\infty.\]
 In particular, if $X$ contains a subset of the form $A\times B$, for some $A,B\subset\R^2$, then
 \[\int_A \ab{f_n(y)}^2\d y\int_B \ab{f_n(z)}^2\d z\to0\text{, as }n\to\infty.\]
\end{lemma}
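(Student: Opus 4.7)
The plan is to start from the pointwise bound
\[
|(f_n\sigma\ast f_n\sigma)(\xi,\tau)|^2
\leq
(|f_n|^2\sigma\ast |f_n|^2\sigma)(\xi,\tau)\,(\sigma\ast\sigma)(\xi,\tau),
\]
which was recalled in the sketch of Lemma \ref{upper-bound-concentration}, and to integrate it against Lebesgue measure on $\R^3$. After unfolding the convolution on the right-hand side as in \eqref{DeltaCalc}, this gives the bilinear identity
\[
\|f_n\sigma\ast f_n\sigma\|_{L^2(\R^3)}^2
\leq
\int_{\R^2\times\R^2} |f_n(y)|^2\,|f_n(z)|^2\,(\sigma\ast\sigma)(y+z,\psi(y)+\psi(z))\,\d y\,\d z.
\]

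Without loss of generality assume $\|f_n\|_{L^2}=1$. Splitting the domain of integration into $X$ and its complement, bounding the integrand on $X$ by $(1-\delta)\|\sigma\ast\sigma\|_{L^\infty}$ via hypothesis \eqref{improved-estimate}, and on $X^\complement$ by the trivial upper bound $\|\sigma\ast\sigma\|_{L^\infty}$, yields
\[
\|f_n\sigma\ast f_n\sigma\|_{L^2(\R^3)}^2
\leq
\|\sigma\ast\sigma\|_{L^\infty}
-\delta\,\|\sigma\ast\sigma\|_{L^\infty}
\int_X |f_n(y)|^2\,|f_n(z)|^2\,\d y\,\d z,
\]
where I used that the unrestricted double integral equals $\|f_n\|_{L^2}^4=1$. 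Since $\{f_n\}$ is extremizing and $\mathcal R_\phi^4=\|\sigma\ast\sigma\|_{L^\infty}$ by Theorem \ref{main}, the left-hand side converges to $\|\sigma\ast\sigma\|_{L^\infty}$. Rearranging the displayed inequality and dividing by $\delta\|\sigma\ast\sigma\|_{L^\infty}>0$ produces
\[
\int_X |f_n(y)|^2\,|f_n(z)|^2\,\d y\,\d z
\leq
\frac{\|\sigma\ast\sigma\|_{L^\infty}-\|f_n\sigma\ast f_n\sigma\|_{L^2(\R^3)}^2}{\delta\,\|\sigma\ast\sigma\|_{L^\infty}}
\longrightarrow 0,
\]
which is the first conclusion. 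For the product version, if $A\times B\subset X$, then Tonelli gives
\[
\int_A |f_n(y)|^2\,\d y\,\int_B |f_n(z)|^2\,\d z
=\int_{A\times B} |f_n(y)|^2\,|f_n(z)|^2\,\d y\,\d z
\leq\int_X |f_n(y)|^2\,|f_n(z)|^2\,\d y\,\d z,
\]
and the right-hand side tends to $0$ by what was just proved.

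No step is genuinely hard: the pointwise Cauchy--Schwarz inequality and its integrated version have already been used in the proof of Theorem \ref{main}, and the only nontrivial input beyond that is the identification $\mathcal R_\phi^4=\|\sigma\ast\sigma\|_{L^\infty}$ which the same theorem supplies. The argument amounts to observing that the ``slack'' $\|\sigma\ast\sigma\|_{L^\infty}-\|f_n\sigma\ast f_n\sigma\|_{L^2}^2$ controls, up to the factor $\delta\|\sigma\ast\sigma\|_{L^\infty}$, the mass of $|f_n|^2\otimes|f_n|^2$ on any region where $\sigma\ast\sigma$ is uniformly bounded away from its essential supremum.
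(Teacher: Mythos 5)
Your proof is correct and follows essentially the same route as the paper: integrate the pointwise Cauchy--Schwarz bound, unfold the convolution to a double integral over $(\R^2)^2$, split the domain into $X$ and $X^\complement$, and let the extremizing property force the slack to vanish. The only cosmetic difference is that you normalize $\|f_n\|_{L^2}=1$ up front, whereas the paper tracks $\|f_n\|_{L^2}^4\to1$ explicitly; both are fine since $\|f_n\|_{L^2}\to1$ for any extremizing sequence.
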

\begin{proof}
Let $\{f_n\}\subset L^2(\R^2)$ be an extremizing sequence for inequality \eqref{2-convolution-2}. The first step is to verify that 
\begin{equation}\label{HypProp}
\liminf_{n\to\infty}\int_{(\R^2)^2} |f_n(y)|^2 |f_n(z)|^2
\frac{(\sigma\ast\sigma)(y+z,\psi(y)+\psi(z))}{\|\sigma\ast \sigma\|_{L^\infty}} \d y\d z=1.
\end{equation}
With this goal in mind, estimate
\begin{align*}
\int_{\R^3}\ab{(f_n\sigma\ast f_n\sigma)(\xi,\tau)}^2\d\xi \d\tau 
&\leq \int_{\R^3}\bigl(\ab{f_n}^2\sigma\ast\ab{f_n}^2\sigma\bigr)(\xi,\tau) 
\bigl(\sigma\ast\sigma\bigr)(\xi,\tau)\d\xi \d\tau \\
&=\int_{(\R^2)^2} |f_n(y)|^2 |f_n(z)|^2
(\sigma\ast\sigma)(y+z,\psi(y)+\psi(z)) \d y\d z\\
&\leq \|\sigma\ast \sigma\|_{L^\infty}\norma{f_n}_{L^2}^4.
\end{align*}
The first and the last terms in this chain of inequalities converge to $\|\sigma\ast \sigma\|_{L^\infty}$, as 
$n\to\infty$,  and therefore so does the third term, and  \eqref{HypProp} follows.
We next observe 
\[\liminf_{n\to\infty}\int_{(\R^2)^2} |f_n(y)|^2 |f_n(z)|^2 \d y\d z=\lim_{n\to\infty}\norma{f_n}_{L^2}^4=1.\]
Writing $X^\complement$ for the complement of the set $X$ in $(\R^2)^2$, we have an inequality
\begin{align*}
 \int_{(\R^2)^2} |f_n(y)|^2 |f_n(z)&|^2
\frac{(\sigma\ast\sigma)(y+z,\psi(y)+\psi(z))}{\|\sigma\ast \sigma\|_{L^\infty}} \d y\d z&\\
&\leq (1-\delta)\int_{X} |f_n(y)|^2 |f_n(z)|^2 \d y\d z+\int_{X^\complement} |f_n(y)|^2 |f_n(z)|^2 \d y\d z.
\end{align*}
Since $\norma{f_n}_{L^2}\to 1$ as $n\to\infty$, we conclude from \eqref{HypProp} that 
\begin{align*}
 1&\leq \liminf_{n\to\infty}\Bigl(\Bigl(\int_{\R^2}\ab{f_n(y)}^2\d y\Bigr)^2-\delta\int_{X}\ab{f_n(y)}^2\ab{f_n(z)}^2\d y\d z\Bigr)\\
 &=1-\delta\limsup_{n\to\infty}\int_{X}\ab{f_n(y)}^2\ab{f_n(z)}^2 \d y\d z.
\end{align*}
It follows that
\begin{equation*}
 \limsup_{n\to\infty}\int_{X}\ab{f_n(y)}^2\ab{f_n(z)}^2 \d y\d z=0,
\end{equation*}
which establishes the first statement. The second statement follows at once, and the proof is complete.
\end{proof}

\noindent The preceding lemma implies the following modest amount of control over extremizing sequences that split their mass in a nontrivial way.

\begin{lemma}\label{NoSplitting}
 Under the hypotheses of Theorem \ref{main}, let $\{f_n\}\subset L^2(\R^2)$ be any extremizing sequence for inequality \eqref{2-convolution-2}. 
Let $0<r_1<r_2<r_3<\infty$ be arbitrary. Then
\[\int_{B_{r_1}}\ab{f_n(y)}^2 \d y\int_{B_{r_3}\setminus B_{r_2}}\ab{f_n(z)}^2 \d z\to 0, \textrm{ as } n\to\infty.\]
\end{lemma}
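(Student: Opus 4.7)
The plan is to apply Lemma \ref{improvement-implies-concentration} directly, with $A = B_{r_1}$, $B = B_{r_3}\setminus B_{r_2}$, and $X = A\times B$. What needs to be checked is the hypothesis of that lemma: there exists $\delta>0$ such that
\[
\frac{(\sigma\ast\sigma)(y+z,\psi(y)+\psi(z))}{\|\sigma\ast\sigma\|_{L^\infty}} \leq 1-\delta
\]
uniformly for $(y,z)\in X$. Recall from the proof of Theorem \ref{main} that $\|\sigma\ast\sigma\|_{L^\infty}=\pi/2$.

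The first step is to observe that every $(y,z)\in X$ satisfies $|z|\geq r_2 > r_1 \geq |y|$, so in particular $y\neq z$. Strict convexity of $\psi=|\cdot|^2+\phi$ then yields $\psi(y)+\psi(z)>2\psi((y+z)/2)$, which means the point $(y+z,\psi(y)+\psi(z))$ lies in the \emph{interior} of the support of $\sigma\ast\sigma$, as described by Proposition \ref{ConvolFormulaProp}(b).

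The second step is to combine the comparison principle with continuity. The proof of Theorem \ref{comparison-convolution} actually establishes the strict inequality $(\sigma\ast\sigma)(\xi,\tau)<(\sigma_0\ast\sigma_0)(\xi,\tau-2\phi(\xi/2))=\pi/2$ at \emph{every} point $(\xi,\tau)$ in the interior of the support of $\sigma\ast\sigma$, by exploiting the uniform bound $|\det T'(y)|\leq c_0<1$ on the compact ellipsoid $\mathcal E_\vphi(\xi,c)$. Thus pointwise on $X$ we have $(\sigma\ast\sigma)(y+z,\psi(y)+\psi(z))<\pi/2$.

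To upgrade this pointwise strict bound to a uniform one, pass to the compact closure $\bar X := \bar B_{r_1}\times\{r_2\leq |z|\leq r_3\}$, which still satisfies $|z|>|y|$ thanks to the strict inequality $r_1<r_2$, so the previous two steps apply verbatim to $\bar X$. The image of $\bar X$ under the continuous map $(y,z)\mapsto(y+z,\psi(y)+\psi(z))$ is a compact subset of the interior of the support of $\sigma\ast\sigma$. Since $\sigma\ast\sigma$ is continuous there (Proposition \ref{ConvolFormulaProp}(d)) and strictly bounded above by $\pi/2$ on this compact image, its maximum on the image is some value $\pi/2-\eta$ with $\eta>0$. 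Setting $\delta=2\eta/\pi>0$, the hypothesis of Lemma \ref{improvement-implies-concentration} is verified, and its conclusion yields the claimed convergence. The main obstacle I expect is precisely this promotion from pointwise strict inequality to a uniform gap; it is essentially the only place the strict separation $r_1<r_2<r_3$ is used, and it rests crucially on continuity of $\sigma\ast\sigma$ in the interior of its support.
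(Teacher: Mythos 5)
Your proposal is correct and follows essentially the same approach as the paper: set $X = B_{r_1}\times(B_{r_3}\setminus B_{r_2})$, use the strict separation $r_1<r_2$ to guarantee the image of $\bar X$ under $(y,z)\mapsto(y+z,\psi(y)+\psi(z))$ lies in the (open) interior of $\supp(\sigma\ast\sigma)$, invoke the strict pointwise bound $(\sigma\ast\sigma)<\pi/2$ in the interior from the proof of Theorem \ref{comparison-convolution}, and upgrade to a uniform gap by continuity on the compact image, then finish via Lemma \ref{improvement-implies-concentration}. You have merely spelled out the compactness and interior-location steps that the paper treats in a single sentence.
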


\begin{proof}
 Let $X=B_{r_1}\times (B_{r_3}\setminus B_{r_2})$. Appealing to the continuity of the convolution $\sigma\ast\sigma$ on its support, to the fact that the essential 
supremum is only achieved on the boundary of the support (as observed in the course of the proof of Theorem \ref{comparison-convolution}), together with the compactness of the set $\overline X$ and the fact that $r_1<r_2$, we can ensure the existence of  $\delta=\delta_{r_1,r_2,r_3}>0$ such that
\[ \frac{(\sigma\ast\sigma)(y+z,\psi(y)+\psi(z))}{\|\sigma\ast \sigma\|_{L^\infty}}\leq 1-\delta,\]
for every $(y,z)\in X$. The conclusion now follows from Lemma \ref{improvement-implies-concentration}.
\end{proof}

\noindent Lemma \ref{NoSplitting} can be upgraded in a way that reveals that an extremizing sequence can only split its mass in a nontrivial way if neither of the corresponding supports remains in a bounded region. We formulate one version of this principle which will be useful for our purposes.

\begin{lemma}\label{NoSplitting_part2}
 Under the hypotheses of Theorem \ref{main}, let $\{f_n\}\subset L^2(\R^2)$ be any extremizing sequence for inequality \eqref{2-convolution-2}. Let $0<r_1<r_2<\infty$ be arbitrary. Then
\begin{equation}
 \label{ProdToZero_2}
 \int_{B_{r_1}}\ab{f_n(y)}^2 \d y\int_{\R^2\setminus B_{r_2}}\ab{f_n(z)}^2 \d z\to 0, \textrm{ as }n\to\infty.
\end{equation}
\end{lemma}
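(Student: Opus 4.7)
The plan is to argue by contradiction, combining Lemma \ref{NoSplitting} to control an intermediate annulus with the weak-interaction bound of Lemma \ref{WeakInteraction}(a) to control the tail. Suppose along a subsequence $a_n c_n\to\eta>0$, where $a_n:=\int_{B_{r_1}}|f_n|^2$ and $c_n:=\int_{\R^2\setminus B_{r_2}}|f_n|^2$. Passing to a further subsequence, I may assume $a_n\to a^{*}>0$ and $c_n\to c^{*}>0$; since extremizing forces $\|f_n\|_{L^2}\to 1$, the mass $\alpha_n:=\int_{B_{r_2}}|f_n|^2$ satisfies $\alpha_n\to\alpha^{*}=1-c^{*}\in(0,1)$. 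Because $\alpha_n\geq a_n$ stays bounded away from zero, Lemma \ref{NoSplitting} gives $\int_{B_{r_3}\setminus B_{r_2}}|f_n|^2\to 0$ for every $r_3>r_2$, so no mass accumulates in any bounded annulus outside $B_{r_2}$.

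Fix $\eps>0$. Using the coercivity $\psi(y)\geq |y|^2$ and Lemma \ref{WeakInteraction}(a), I choose $r_3>r_2$ large enough that both
\[\inf_{|y|\geq r_3}\psi(y)>2\max_{|y|\leq r_2}\psi(y)\quad\text{and}\quad \|\one_{B_{r_2}}\sigma\ast\one_{B^\complement_{r_3}}\sigma\|_{L^\infty(\R^3)}<\eps.\]
Decompose $f_n=g_n+m_n+h_n$ with $g_n=f_n\one_{B_{r_2}}$, $h_n=f_n\one_{B^\complement_{r_3}}$, and $m_n$ supported on the intermediate annulus. Then $\|m_n\|_{L^2}\to 0$, $\|g_n\|_{L^2}^2\to\alpha^{*}$, and $\|h_n\|_{L^2}^2\to c^{*}$. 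The first displayed inequality separates the $\tau$-supports of $g_n\sigma\ast g_n\sigma$ (contained in $\{\tau\leq 2\max_{B_{r_2}}\psi\}$) from those of $g_n\sigma\ast h_n\sigma$ and $h_n\sigma\ast h_n\sigma$ (contained in $\{\tau\geq\inf_{B^\complement_{r_3}}\psi\}$), which yields an orthogonality identity in $L^2$.

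The second displayed inequality is used to bound the cross-term via a pointwise Cauchy--Schwarz of the kind already employed in the proof of Lemma \ref{upper-bound-concentration}:
\[|(g_n\sigma\ast h_n\sigma)(\xi,\tau)|^2\leq (|g_n|^2\sigma\ast|h_n|^2\sigma)(\xi,\tau)\cdot(\one_{B_{r_2}}\sigma\ast\one_{B^\complement_{r_3}}\sigma)(\xi,\tau),\]
which after integration yields $\|g_n\sigma\ast h_n\sigma\|_{L^2}^2\leq\eps\,\alpha_n\gamma_n$, where $\gamma_n:=\|h_n\|_{L^2}^2$. Bounding every term involving $m_n$ by $o(1)$ through the bilinear extension inequality $\|u\sigma\ast v\sigma\|_{L^2}\leq(\pi/2)^{1/2}\|u\|_{L^2}\|v\|_{L^2}$ (a polarized consequence of Theorem \ref{main}), applying the sharp bound to $g_n\sigma\ast g_n\sigma$ and $h_n\sigma\ast h_n\sigma$ individually, and using the orthogonality from the previous step, I arrive at
\[\tfrac{\pi}{2}=\lim_{n\to\infty}\|f_n\sigma\ast f_n\sigma\|_{L^2}^2\leq\tfrac{\pi}{2}\bigl(\alpha^{*2}+c^{*2}\bigr)+O(\sqrt{\eps}).\]
Letting $\eps\to 0^+$ forces $\alpha^{*2}+c^{*2}\geq 1$, which contradicts $\alpha^{*}+c^{*}=1$ with $\alpha^{*},c^{*}\in(0,1)$.

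The main obstacle is bounding the cross-term $\|g_n\sigma\ast h_n\sigma\|_{L^2}$. Since $h_n$ may concentrate at spatial infinity under hypothesis (ii), the pointwise values of $\sigma\ast\sigma$ along the diagonal $\{(y+z,\psi(y)+\psi(z)):y\in B_{r_2},\ z\in B^\complement_{r_3}\}$ can be arbitrarily close to $\pi/2$, so Lemma \ref{improvement-implies-concentration} cannot be applied directly on the set $B_{r_1}\times B^\complement_{r_3}$. The substitute, furnished by Lemma \ref{WeakInteraction}(a), is a decay estimate for the convolution of indicator functions of the two separated regions; this is precisely the quantity that arises through Cauchy--Schwarz and can be made small by taking $r_3$ large.
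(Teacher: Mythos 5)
Your plan is correct and shares the paper's overall skeleton: decompose $f_n$ into near, middle, and far pieces, discard the intermediate annulus via Lemma \ref{NoSplitting}, control the near--far interaction through Lemma \ref{WeakInteraction}(a), apply the sharp inequality \eqref{2-convolution-2} to the near and far pieces separately, and close by sending a parameter to its limit ($r_3\to\infty$ in the paper, $\eps\to 0^+$ in your contradiction framing). The one genuine technical difference lies in how the inner product between $g_n\sigma\ast g_n\sigma$ and $h_n\sigma\ast h_n\sigma$ is neutralized: the paper uses the identity
$|\langle F_n\sigma\ast F_n\sigma,\,G_n\sigma\ast G_n\sigma\rangle_{L^2}|\leq\|F_n\sigma\ast G_n\sigma\|_{L^2}^2$,
obtained from Plancherel and the triangle inequality, whereas you exploit coercivity of $\psi$ to pick $r_3$ large enough that the $\tau$-supports of $g_n\sigma\ast g_n\sigma$ and of $g_n\sigma\ast h_n\sigma,\,h_n\sigma\ast h_n\sigma$ are disjoint, so that inner product vanishes outright. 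Your variant is geometrically transparent and avoids the Plancherel trick, at the small cost of the extra gap condition on $r_3$; note that the cross term $\langle g_n\sigma\ast h_n\sigma,\,h_n\sigma\ast h_n\sigma\rangle$ does \emph{not} vanish by this argument, but --- as you indicate --- it is already $O(\sqrt\eps)$ via Cauchy--Schwarz together with the smallness of $\|g_n\sigma\ast h_n\sigma\|_{L^2}$, so the final inequality $\tfrac{\pi}{2}\leq\tfrac{\pi}{2}(\alpha^{*2}+c^{*2})+O(\sqrt\eps)$ goes through and the contradiction with $\alpha^*+c^*=1$, $\alpha^*,c^*\in(0,1)$ is valid.
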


\begin{remark}\label{rtorho}
If conclusion \eqref{ProdToZero_2} holds for one pair $(r_1,r_2)$ satisfying $0<r_1<r_2<\infty$, then it holds for any pair $(\rho_1,\rho_2)$ satisfying $0<\rho_1<\rho_2<\infty$ and $r_1\geq\rho_1$. To see this, start by 
noticing that the case $r_1\geq\rho_1$ and $r_2\leq\rho_2$ is clear. On the other hand, if $r_1\geq\rho_1$ and $r_2>\rho_2$, then
\begin{align*}
 \int_{B_{\rho_1}}&\ab{f_n(y)}^2 \d y\int_{\R^2\setminus B_{\rho_2}}\ab{f_n(z)}^2 \d z\\
&\leq \int_{B_{r_1}}\ab{f_n(y)}^2 \d y\int_{\R^2\setminus 
B_{r_2}}\ab{f_n(z)}^2 \d z
\quad+\int_{B_{\rho_1}}\ab{f_n(y)}^2 \d y\int_{B_{r_2}\setminus 
B_{\rho_2}}\ab{f_n(z)}^2 \d z,\\
\end{align*}
which tends to zero, as $n\to\infty$, by \eqref{ProdToZero_2} and Lemma \ref{NoSplitting}. 
Moreover, in view of the uniform bound with respect to $y\in\R^2$ from part (c) of Lemma \ref{WeakInteraction},
the proof 
given below can be adapted to the case of balls centered at any point $y\in\R^2$, not necessarily the origin. 
\end{remark}

\begin{proof}[Proof of Lemma \ref{NoSplitting_part2}]
Let $r_1>0$ be given.
 If 
 $$\int_{B_{r_1}}\ab{f_n(y)}^2 \d y\to0, \textrm{ as } n\to\infty,$$ 
 then the conclusion follows at once since $\norma{f_n}_{L^2}\leq 1$. Therefore no generality is lost in assuming, 
possibly after passing to a subsequence, that
\begin{equation}\label{infremainspositive}
\delta:=\inf_{n\in\N}\int_{B_{r_1}}\ab{f_n(y)}^2 \d y>0.
\end{equation}
It suffices to show that
\begin{equation}
 \label{GoesToZero}
 \int_{\R^2\setminus B_{r_2}}\ab{f_n(z)}^2 \d z\to 0,\text{ as }n\to\infty.
\end{equation}

\noindent Take $r_3>r_2$. From Lemma \ref{NoSplitting} and inequality \eqref{infremainspositive}, we know that
\begin{equation}
 \label{InterGoToZero}
 \int_{B_{r_3}\setminus B_{r_2}}\ab{f_n(z)}^2 \d z\to 0,\text{ as }n\to\infty.
\end{equation}
\noindent Decompose 
$$f_n=f_n\one_{B_{r_2}}+f_{n}\one_{\R^2\setminus B_{r_3}}+f_n\one_{B_{r_3}\setminus B_{r_2}}=:F_n+G_n+H_n,$$ 
and note that
\[f_n\sigma\ast f_n\sigma=F_n\sigma\ast F_n\sigma+G_n\sigma\ast G_n\sigma+2 F_n\sigma\ast G_n\sigma+R_n,\]
where, in view of inequality \eqref{2-convolution-2} and estimate \eqref{InterGoToZero}, the remainder term $R_n$ satisfies
$$\norma{R_n}_{L^2(\R^3)}\leq C\norma{H_n}_{L^2(\R^2)}\to 0, \textrm{ as } n\to\infty.$$
The key step is to bound the quantity $\norma{F_n\sigma\ast G_n\sigma}_{L^2}^2$. We have the pointwise inequality 
\[\ab{(F_n\sigma\ast 
G_n\sigma)(\xi,\tau)}^2
\leq \bigl(\ab{F_n}^2\sigma\ast \ab{G_n}^2\sigma\bigr)(\xi,\tau) \bigl(\one_{B_{r_2}}\sigma\ast\one_{\R^2\setminus 
B_{r_3}}\sigma\bigr)(\xi,\tau),\]
which follows from an application of the Cauchy--Schwarz inequality as before. As a consequence, 
\begin{align*}
 \norma{F_n\sigma\ast G_n\sigma}_{L^2(\R^3)}^2
\leq\rho^2(r_2,r_3)\norma{F_n}_{L^2(\R^2)}^2\norma{G_n}_{L^2(\R^2)}^2,
\end{align*}
where the function $\rho$ is given by 
$$\rho(r_2,r_3):=\norma{\one_{B_{r_2}}\sigma\ast\one_{\R^2\setminus 
B_{r_3}}\sigma}_{L^\infty(\R^3)}^{\frac12}.$$
For large values of $r_3$, the sets $B_{r_2}$ and $\R^2\setminus B_{r_3}$ interact weakly as discussed in \S \ref{sec:widc}. Part (a) 
of Lemma \ref{WeakInteraction} implies that, for each fixed $r_2>0$, 
$$\rho(r_2,r_3)\to 0, \textrm{ as } r_3\to\infty.$$
 
\noindent Two applications of Plancherel's Theorem, together with the triangle inequality, imply the following bound for the inner product:
\[\ab{\langle F_n\sigma\ast F_n\sigma,G_n\sigma\ast G_n\sigma\rangle_{L^2}}\leq \norma{F_n\sigma\ast G_n\sigma}_{L^2}^2.\]

\noindent It follows that there exists an absolute constant $C<\infty$, which can be explicitly computed but whose exact numerical value is unimportant for our purposes, for which
\begin{align*}
 \norma{f_n\sigma\ast f_n\sigma}_{L^2}^2&\leq \norma{F_n\sigma\ast F_n\sigma}_{L^2}^2+\norma{G_n\sigma\ast 
G_n\sigma}_{L^2}^2+C\rho(r_2,r_3)+o_n(1)\\
 &\leq \frac{\pi}{2}(\norma{F_n}_{L^2}^4+\norma{G_n}_{L^2}^4)+C\rho(r_2,r_3)+o_n(1)\\
 &=\frac{\pi}{2}\norma{f_n}_{L^2}^4-\pi\norma{F_n}_{L^2}^2\norma{G_n}_{L^2}^2+C\rho(r_2,r_3)+o_n(1),
\end{align*}
Here, we used the sharp inequality \eqref{2-convolution-2}, and orthogonality considerations. 
The function $o_n(1)$ may depend on $r_3$, but satisfies $o_n(1)\to 0$, as $n\to\infty$, for each fixed $r_3$, and is allowed to change 
from line to line. 
Taking $n\to\infty$ in the previous chain of inequalities, we conclude
\[\limsup_{n\to\infty}\norma{F_n}_{L^2}^2\norma{G_n}_{L^2}^2\leq C\rho(r_2,r_3).\]
Consequently,
\[\limsup_{n\to\infty}\norma{G_n}_{L^2}^2\leq \frac{C}{\delta}\rho(r_2,r_3),\]
where $\delta$ was defined in \eqref{infremainspositive}, and therefore,
\[\limsup_{n\to\infty}\bigl(\norma{G_n}_{L^2}^2+\norma{H_n}_{L^2}^2\bigr)\leq \frac{C}{\delta}\rho(r_2,r_3),\]
which is equivalent to
\[\limsup_{n\to\infty}\int_{\R^2\setminus B_{r_2}}\ab{f_n(z)}^2 \d z\leq \frac{C}{\delta}\rho(r_2,r_3).\]
Since the left-hand side of this inequality is independent of $r_3$, and the right-hand side tends to $0$ as $r_3\to 0$, conclusion \eqref{GoesToZero} must hold. The proof of the lemma is now complete.
\end{proof}

\noindent We have  collected all the ingredients necessary to the proof of Theorem \ref{BehaviorExtSeq}.

\begin{proof}[Proof of Theorem \ref{BehaviorExtSeq}]
 Let $\{f_n\}\subset L^2(\R^2)$ be any extremizing sequence for inequality \eqref{2-convolution-2}. Take any subsequence, and slightly abuse notation by again calling it $\{f_n\}$. If this subsequence $\{f_n\}$ does not concentrate at 
infinity, then there exists a further sub-subsequence, still 
denoted by $\{f_n\}$, and a number $r_0<\infty$, such that
\[\inf_{n\in\N}\int_{B_{r_0}}\ab{f_n(y)}^2 \d y>0.\]
From Lemma \ref{NoSplitting_part2}, we conclude
\[\int_{\R^2\setminus B_{2r_0}}\ab{f_n(y)}^2 \d y\to 0,\text{ as }n\to\infty.\]
It follows that
\begin{equation}\label{intheball}
\int_{B_{2r_0}}\ab{f_n(y)}^2 \d y\to 1,\text{ as }n\to\infty.
\end{equation}
As a consequence of Lemma \ref{improvement-implies-concentration}, 
$\norma{\one_{B_{2r_0}}\sigma\ast\one_{B_{2r_0}}\sigma}_{L^\infty}=\norma{\sigma\ast\sigma}_{L^\infty}$, and the supremum is achieved inside the ball $\bar B_{2r_0}$.
In particular, case (i) holds, and  the set 
$$E:=\{y\in \R^2 | H(\phi)(y)=0\}$$
is nonempty.
For $\eps\in (0,1)$, let $N_\eps(E)$ denote the open $\eps$-neighborhood of $E$, and consider the set
$$Y:=\{(y,z)\in B_{3r_0}\times B_{3r_0}| y\in N_\eps(E), z\in N_\eps(E), |y-z|<\eps\}.$$
Let $X:=(\bar B_{3r_0}\times \bar B_{3r_0})\setminus Y$.
We claim that there 
exists $\delta=\delta(\eps)>0$, such that 
\begin{equation}\label{upper-bound-strong}
 \frac{(\sigma\ast\sigma)(y+z,\psi(y)+\psi(z))}{\|\sigma\ast \sigma\|_{L^\infty}}\leq 1-\delta,
\end{equation}
for every $(y,z)\in X$. This follows from the compactness of the set $X$, together with the fact that, if $(y,z)\in X$, then the point $(y+z,\psi(y)+\psi(z))$ is away from the portion of the boundary of the support where the convolution $\sigma\ast\sigma$ attains its essential supremum  in a quantifiable way that depends only on $\eps$.
Since inequality \eqref{upper-bound-strong} holds for every $(y,z)\in X$,  Lemma \ref{improvement-implies-concentration} implies
\[\int_{X}\ab{f_n(y)}^2\ab{f_n(z)}^2 \d y\d z\to 0,\text{ as }n\to\infty.\]
In light of \eqref{intheball}, it then follows that
\begin{equation}\label{goesto1onY}
\int_{Y}\ab{f_n(y)}^2\ab{f_n(z)}^2 \d y\d z\to 1,\text{ as }n\to\infty.
\end{equation}
The remaining of the proof coincides with the second part of the proof of \cite[Proposition 6.3]{Q}, with minor modifications only. We include it for the convenience of 
the reader. 
We seek to locate a sequence $\{y_n\}\subset\R^2$ along which concentration occurs. Fubini's Theorem, and the fact that $\norma{f_n}_{L^2}\leq 1$, together imply
\begin{align*}
 \int_{Y}\ab{f_n(y)}^2\ab{f_n(z)}^2 \d y\d z&=
    \int_{B_{3r_0}\cap N_\eps(E)}\ab{f_n(y)}^2\Bigl(\int_{B_{3r_0}\cap N_\eps(E)\cap B_\eps(y)}\ab{f_n(z)}^2\d z\Bigr)\d y\\
    &\leq\norma{f_n\one_{B_{3r_0}\cap N_\eps(E)}}_{L^2}^2\sup_{y\in B_{3r_0}\cap N_\eps(E)}\int_{B_{3r_0}\cap 
N_\eps(E)\cap B_\eps(y)}\ab{f_n(z)}^2\d z\leq 1.
\end{align*}
From \eqref{goesto1onY}, it then follows that
\[\lim_{n\to\infty}\sup_{y\in B_{3r_0}\cap N_\eps(E)}\int_{B_{3r_0}\cap N_\eps(E)\cap B_\eps(y)}\ab{f_n(z)}^2\d z=1.\]
This implies the existence of a function $N:(0,1)\to\N$, such that
\[\sup_{y\in B_{3r_0}\cap N_\eps(E)}\int_{\{\ab{z-y}\leq\eps\}}\ab{f_n(z)}^2\d z\geq 1-\frac{\eps}{2}, \textrm{ for every } n\geq N(\eps).\]
 Hence, there exists a sequence $\{y_n^\eps\}_{n\geq N(\eps)}\subset \bar{B}_{3r_0}\cap E$, such that
\[\int_{\{\ab{z-y_n^\eps}\leq2\eps\}}\ab{f_n(z)}^2\d z\geq 1-\eps, \textrm{ for every }  n\geq N(\eps). \]
Here, we exchanged the neighborhood $N_\eps(E)$ for the set $E$, at the expense of an extra $\eps$ in the domain of integration.
We proceed to construct the sequence $\{y_n\}$ via a diagonal process. Take $\eps_k=\frac1{k+2}$. We obtain a strictly increasing sequence 
$$N_k:=\max\{N(\eps_j)|\,1\leq j\leq k\}+k,$$ 
and sequences $\{y_n^k\}_{n\geq N_k}$, satisfying
\[\int_{\{\ab{z-y_n^k}\leq\frac{2}{k}\}}\ab{f_n(z)}^2\d z\geq 1-\frac{1}{k},\]
for every $k\geq 1$ and  $n\geq N_k$. 
For each $n\geq N_1$, let $\ell_n:=\sup\{k\in\N|\,N_k\leq n\}$. This is a finite number since the sequence $\{N_k\}$ is strictly increasing. Further note that $n\geq N_{\ell_n}$. Define
\[y_n:=\begin{cases}
       y_n^{\ell_n},&\text{ if }n\geq N_1,\\
       y_0,&\text{ if }n<N_1,
      \end{cases}
\]
where $y_0\in E$ is arbitrary, but fixed. It is then clear that
\[\int_{\{\ab{z-y_n}\leq \frac{2}{\ell_n}\}}\ab{f_n(z)}^2\d z\geq 1-\frac{1}{\ell_n},\]
for every $n\geq N_1$, which implies that $\{f_n\}$ concentrates along the sequence $\{y_n\}$ since $\ell_n\to\infty$, as $n\to\infty$.
The statement about subsequences of $\{f_n\}$ follows by compactness of  the set $E\cap \bar B_{3r_0}$, since every subsequence of $\{y_n\}$ has a further 
sub-subsequence that converges to a point in $E\cap \bar B_{3r_0}$.
\end{proof}

\subsection{Some consequences}
The methods of the proof of Theorem \ref{BehaviorExtSeq} specialize to at least two distinct situations of interest. The first one is a direct consequence of the statement of Theorem \ref{BehaviorExtSeq}. 

\begin{cor}\label{GoingtoInfinity}
 Let $\phi:\R^2\to\R$ be a nonnegative, twice continuously differentiable, strictly convex function, such that
 \begin{itemize}
 \item [(i)] $H(\phi)(y)\neq0$, for every $y\in\R^2$, and
 \item [(ii)] There exists a sequence $\{y_n\}\subset \R^2$ with $\ab{y_n}\to\infty$,  such that  $H(\phi)(y_n)\to0$, as $n\to\infty$.
\end{itemize}
Then any extremizing sequence for inequality \eqref{2-convolution-2} concentrates at infinity.
\end{cor}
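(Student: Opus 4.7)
The plan is to invoke Theorem~\ref{BehaviorExtSeq} directly and then promote the subsequential conclusion to a statement about the full extremizing sequence. The two hypotheses of the corollary are designed precisely so that the general framework of Theorem~\ref{BehaviorExtSeq} applies: hypothesis (ii) is exactly condition (ii) in the statement of Theorem~\ref{main}, so that $\phi$ falls within the class of admissible functions, and hypothesis (i) forces the first alternative in Theorem~\ref{BehaviorExtSeq} to be vacuous.

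Concretely, let $\{f_n\}\subset L^2(\R^2)$ be an arbitrary extremizing sequence for inequality \eqref{2-convolution-2}. By Theorem~\ref{BehaviorExtSeq}, every subsequence of $\{f_n\}$ contains a further subsequence that either concentrates at some point $y_0\in\R^2$ with $H(\phi)(y_0)=0$, or concentrates at infinity. Under hypothesis (i) of the corollary, the set $\{y\in\R^2:H(\phi)(y)=0\}$ is empty, so the first alternative cannot occur. Therefore every subsequence of $\{f_n\}$ admits a further subsequence concentrating at infinity.

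The final step is a standard subsequence principle: if every subsequence of $\{f_n\}$ has a further subsequence satisfying a property $P$, then $\{f_n\}$ itself satisfies $P$. Applied to $P=\text{``concentrates at infinity''}$, suppose for contradiction that $\{f_n\}$ does not concentrate at infinity. Then there exist $\eps_0,\rho_0>0$ and indices $n_1<n_2<\cdots$ such that
\[
\int_{\{|y|\leq \rho_0\}}|f_{n_k}(y)|^2 \d y > \eps_0 \norma{f_{n_k}}_{L^2(\R^2)}^2
\]
for all $k$. The subsequence $\{f_{n_k}\}$ is itself extremizing, so by the previous paragraph it has a further subsequence that concentrates at infinity, which contradicts the displayed lower bound. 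Hence $\{f_n\}$ concentrates at infinity, completing the proof.

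There is essentially no obstacle here: the corollary is a clean specialization of Theorem~\ref{BehaviorExtSeq}, and the only routine technicality is the subsequence-to-sequence upgrade, which relies solely on the definition of concentration at infinity given in Definition~\ref{DefConcentration}.
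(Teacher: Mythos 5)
Your proof is correct and matches the paper's intent: the paper itself simply calls Corollary~\ref{GoingtoInfinity} ``a direct consequence of the statement of Theorem~\ref{BehaviorExtSeq},'' and your argument spells out exactly what that means — hypothesis (ii) places $\phi$ in the admissible class, hypothesis (i) empties the pointwise alternative, and the routine subsequence-to-sequence upgrade (which you state and verify carefully) promotes the subsequential conclusion of Theorem~\ref{BehaviorExtSeq} to the full sequence. Nothing is missing.
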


\noindent An example of a function that satisfies the hypotheses of the preceding corollary is $\phi(y_1,y_2)=e^{y_1}+e^{y_2}$, $(y_1,y_2)\in\R^2$. The next result shows that extremizing sequences will not concentrate at spatial
infinity if a suitable nondegeneracy condition is placed on the function $\phi$.

\begin{cor}
 \label{StayingBounded}
  Let $\phi:\R^2\to\R$ be a nonnegative, twice continuously differentiable, strictly convex function, such that the set $E:=\{y\in\R^2|\,H(\phi)(y)=0\}$ is 
nonempty. Suppose that there exist $r_0\geq0$ and a function $\Theta:\R^2\to[0,\infty)$ satisfying 
$\inf_{\{\ab{y}>r\}}\Theta(y)>0$, for every $r>r_0$, and such that the matrix
\begin{equation}\label{coercivity_v3}
H(\phi)(y)-\Theta(y)I
\end{equation}
is positive semidefinite, for every $y\in\R^2$. Then every extremizing sequence $\{f_n\}\subset L^2(\R^2)$ for inequality \eqref{2-convolution-2} concentrates along a sequence of points in $E$. Moreover, given any subsequence of $\{f_n\}$, there exist a point $y_0\in E\cap \bar B_{r_0}$ and a sub-subsequence which concentrates at $y_0$.
\end{cor}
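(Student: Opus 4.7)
The plan is to reduce to Theorem \ref{BehaviorExtSeq} and rule out concentration at infinity using the nondegeneracy hypothesis. First, the assumption on $\Theta$ forces $E\subseteq\bar B_{r_0}$: if $|y|>r_0$, pick $r\in(r_0,|y|)$ to find $\Theta(y)\geq\inf_{|w|>r}\Theta(w)>0$, so $H(\phi)(y)\succeq\Theta(y)I\neq 0$. Hence $E$ is compact. The same bound precludes condition (ii) of Theorem \ref{main}, since any sequence with $|y_n|\to\infty$ satisfies $H(\phi)(y_n)\succeq\Theta^*(r)I$ for $|y_n|>r>r_0$, where $\Theta^*(r):=\inf_{|w|>r}\Theta(w)>0$, so $H(\phi)(y_n)\not\to 0$. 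Since $E$ is nonempty, condition (i) of Theorem \ref{main} holds, and Theorem \ref{BehaviorExtSeq} applies: every subsequence of $\{f_n\}$ has a further subsequence concentrating either at some point of $E$ or at infinity.

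The crux of the proof is to rule out concentration at infinity. Arguing by contradiction, suppose a subsequence (still denoted $\{f_n\}$) concentrates at infinity. I would aim to derive the bound
\[
\limsup_{n\to\infty}\frac{\|f_n\sigma\ast f_n\sigma\|_{L^2(\R^3)}^2}{\|f_n\|_{L^2(\R^2)}^4}<\frac{\pi}{2},
\]
contradicting extremality. Set $X=\{(y,z)\in(\R^2)^2:|y|>R,|z|>R\}$ for $R>r_0$ large, and seek to establish the pointwise improvement
\[
\frac{(\sigma\ast\sigma)(y+z,\psi(y)+\psi(z))}{\|\sigma\ast\sigma\|_{L^\infty}}\leq 1-\delta\quad\text{for every }(y,z)\in X,
\]
for some $\delta=\delta(R)>0$. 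Granting this, the proof proceeds by an adaptation of Lemma \ref{improvement-implies-concentration}: concentration at infinity yields $\int_{X^\complement}|f_n(y)|^2|f_n(z)|^2\,dy\,dz\to 0$ (since $X^\complement\subseteq (B_R\times\R^2)\cup(\R^2\times B_R)$ and $\|f_n\mathbbm{1}_{B_R}\|_{L^2}\to 0$), so the improvement on $X$ gives the required strict bound and hence the contradiction.

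The pointwise claim should follow from formula \eqref{ConvolutionFormula} and the matrix bound $H(\psi)=2I+H(\phi)\succeq(2+\Theta)I$, which yields
\[
(\sigma\ast\sigma)(\xi,\tau)\leq \int_{\mathbb{S}^1}\!\left(4+\int_{-1}^1\Theta\bigl(\xi/2+t\alpha(\xi,\tau,\omega)\omega\bigr)\,dt\right)^{\!-1}\!d\mu_\omega.
\]
For $(\xi,\tau)=(y+z,\psi(y)+\psi(z))$ with $|y|,|z|>R$, the $\psi$-ellipsoid $\mathcal E_\psi(\xi,\tau-2\psi(\xi/2))$ contains the antipodal pair $\pm(y-z)/2$ whose endpoints $y,z$ are far from the origin. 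A geometric analysis should show that a positive-measure set of chords $\{\xi/2+t\alpha\omega:t\in[-1,1]\}$ spends a non-negligible fraction of $t$-time outside $B_{r_0+1}$, where $\Theta\geq\Theta^*(r_0+1)>0$; this produces a uniform positive lower bound on the inner integral and hence yields $\delta>0$. The main obstacle here is handling configurations where $y$ and $z$ are nearly antipodal, so that the chord from $y$ to $z$ (and related chords) may pass close to the origin; one must use the contribution near the chord endpoints, where $\Theta$ is large, to compensate.

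Once concentration at infinity is excluded, every subsequence of $\{f_n\}$ has a sub-subsequence concentrating at some $y_0\in E\cap\bar B_{r_0}$, and compactness of $E\cap\bar B_{r_0}$ delivers the second assertion. The first assertion, namely concentration of the full sequence $\{f_n\}$ along some sequence in $E$, follows from the diagonal extraction at the end of the proof of Theorem \ref{BehaviorExtSeq}.
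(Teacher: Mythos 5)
Your reduction is correct and matches the paper: the hypothesis on $\Theta$ forces $E\subset\bar B_{r_0}$ and precludes condition (ii) of Theorem \ref{main}, so Theorem \ref{BehaviorExtSeq} applies, and it remains only to rule out concentration at infinity. The deduction of the two assertions of the corollary once this is done is also in order (though your appeal to the ``diagonal extraction'' for the first assertion is really the routine observation that the second assertion implies the first by contradiction).

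The genuine gap is exactly where you flag it, and it is not a minor technicality. Your set $X=\{(y,z):|y|>R,\,|z|>R\}$ contains near-antipodal pairs $(y,z)\approx(y,-y)$ with $|y|$ arbitrarily large. For such pairs $\xi=y+z$ is arbitrarily small, so the chord midpoint $\xi/2$ lies deep inside $\bar B_{r_0}$, where $\Theta$ may vanish. Formula \eqref{ConvolutionFormula} then shows that for those $\omega\in\mathbb S^1$ whose associated chord half-length $\alpha(\xi,\tau,\omega)$ is small, the inner integral $\int_{-1}^1\langle\omega,H(\psi)\omega\rangle\,dt$ can be arbitrarily close to $4$. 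Producing a uniform $\delta>0$ would therefore require a lower bound on the $\mu$-measure of the ``good'' directions $\omega$, i.e., a control on the eccentricity of the $\psi$-ellipsoids $\mathcal E_\psi(\xi,c)$ for small $|\xi|$ and large $c$, and the hypotheses on $\phi$ supply no such control. Your proposed remedy (``use the contribution near the chord endpoints'') names the issue without resolving it; it is precisely the missing estimate.

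The paper avoids this difficulty by taking $X'=\bigl((\R^2)^2\setminus(B_{3r_0}\times B_{3r_0})\bigr)\cap\{\langle y,z\rangle\geq 0\}$. On $X'$ one has $|y+z|^2\geq\max(|y|^2,|z|^2)\geq 9r_0^2$, so $|\xi/2|\geq\tfrac32 r_0>r_0$: the chord midpoint stays uniformly outside the region where $\Theta$ may vanish, and a short case analysis in $\alpha$ (small, moderate, large) produces a uniform lower bound on $\int_{-1}^1\Theta\,dt$ and hence the required $\delta>0$. The constraint $\langle y,z\rangle\geq 0$ does cost an extra combinatorial step: one covers $\R^2$ by finitely many sectors $S_1,\dots,S_k$ of opening $<\pi/2$, notes that $A_j\times A_j\subset X'$ for $A_j:=(B_{3r_0})^{\complement}\cap S_j$, and applies the ``in particular'' conclusion of Lemma \ref{improvement-implies-concentration} to each $A_j\times A_j$ to get $\int_{A_j}|f_n|^2\to 0$; summing over $j$ gives $\int_{(B_{3r_0})^{\complement}}|f_n|^2\to 0$, contradicting concentration at infinity. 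Your approach, if the improvement on $X$ held, would skip that step since $X\supseteq(B_R)^{\complement}\times(B_R)^{\complement}$ directly; but establishing the improvement on $X$ is the hard part and remains open in your write-up.
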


\noindent
Condition \eqref{coercivity_v3} implies the existence of a constant $\delta>0$, for which  
$$(\sigma\ast\sigma)(y+z,\psi(y)+\psi(z))\leq(1-\delta) \norma{\sigma\ast\sigma}_{L^\infty},$$ for every $(y,z)\in 
(\R^2)^2\setminus(B_{3r_0}\times B_{3r_0})$ such that $\langle y,z\rangle\geq0$, and Lemma \ref{improvement-implies-concentration} can then be invoked to 
preclude concentration at infinity.  Further note that \eqref{coercivity_v3} is fulfilled by the functions $\phi=|\cdot|^p$, for each $p>2$. In this 
case, we can take 
$r_0=0$, and so concentration can only occur at the origin.
\vspace{.1cm}

\noindent In the case of extremizing sequences concentrating at infinity, we can further refine the analysis as follows. Let $\{f_n\}\subset L^2(\R^2)$ be a sequence such that 
$\norma{f_n}_{L^2}\to 1$, as $n\to\infty$. We say that the sequence $\{f_n\}$ satisfies the \textit{splitting condition} if the following holds. There exists 
$\alpha\in(0,1)$ such that, 
for every $\eps>0$, there exist $r>0$, $n_0\geq 1$, and sequences $\{y_n\}\subset \R^2$, $\{r_n\}\subset \R$, with $r_n\to\infty$, as $n\to\infty$, 
such that the functions $g_{n,1}:=f_{n}\one_{B_r(y_n)}$ and $g_{n,2}:=f_{n}\one_{\R^2\setminus B_{r_n}(y_n)}$ satisfy 
\begin{equation}\label{LionsSplitting}
 \norma{f_{n}-(g_{n,1}+g_{n,2})}_{L^2}^2\leq \eps,\, \ab{\norma{g_{n,1}}_{L^2}^2-\alpha}\leq
\eps,\,\ab{\norma{g_{n,2}}_{L^2}^2-(1-\alpha)}\leq\eps,
\end{equation}
for every $n\geq n_0$. The following result holds.
\begin{prop}
 \label{NoLSplitting}
 Under the hypotheses of Theorem \ref{main}, let $\{f_n\}\subset L^2(\R^2)$ be any extremizing sequence for inequality \eqref{2-convolution-2}. Then 
$\{f_n\}$ does not satisfy the splitting condition.
\end{prop}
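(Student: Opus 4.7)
The plan is to argue by contradiction, following the template already established in the proof of Lemma \ref{NoSplitting_part2}. Assume that $\{f_n\}$ is extremizing and satisfies the splitting condition with some $\alpha\in(0,1)$. Given $\eps>0$, invoke \eqref{LionsSplitting} to obtain $r>0$, $n_0\in\N$, and sequences $\{y_n\}\subset\R^2$, $\{r_n\}\to\infty$, such that $f_n=g_{n,1}+g_{n,2}+h_n$, where $g_{n,1}=f_n\one_{B_r(y_n)}$, $g_{n,2}=f_n\one_{\R^2\setminus B_{r_n}(y_n)}$, $\|h_n\|_{L^2}^2\leq\eps$, $\bigl\lvert\|g_{n,1}\|_{L^2}^2-\alpha\bigr\rvert\leq\eps$, and $\bigl\lvert\|g_{n,2}\|_{L^2}^2-(1-\alpha)\bigr\rvert\leq\eps$ for every $n\geq n_0$.

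Expand the convolution as
\[
f_n\sigma\ast f_n\sigma = g_{n,1}\sigma\ast g_{n,1}\sigma + g_{n,2}\sigma\ast g_{n,2}\sigma + 2\,g_{n,1}\sigma\ast g_{n,2}\sigma + R_n,
\]
where the remainder $R_n$ collects all cross terms involving $h_n$. The non-sharp bilinear inequality \eqref{nonsharp-2-convolution} (consequence of Theorem \ref{main}) yields $\|R_n\|_{L^2}\lesssim\|h_n\|_{L^2}\|f_n\|_{L^2}\lesssim\sqrt{\eps}$. For the cross term, the pointwise Cauchy--Schwarz bound already used in the proof of Lemma \ref{lemma-concentration-point} gives
\[
\abs{(g_{n,1}\sigma\ast g_{n,2}\sigma)(\xi,\tau)}^2 \leq \bigl(\ab{g_{n,1}}^2\sigma\ast\ab{g_{n,2}}^2\sigma\bigr)(\xi,\tau)\,\bigl(\one_{B_r(y_n)}\sigma\ast\one_{\R^2\setminus B_{r_n}(y_n)}\sigma\bigr)(\xi,\tau),
\]
so that, integrating over $\R^3$,
\[
\|g_{n,1}\sigma\ast g_{n,2}\sigma\|_{L^2}^2 \leq \Norma{\one_{B_r(y_n)}\sigma\ast\one_{\R^2\setminus B_{r_n}(y_n)}\sigma}_{L^\infty}\,\|g_{n,1}\|_{L^2}^2\,\|g_{n,2}\|_{L^2}^2.
\]
The key ingredient is part (c) of Lemma \ref{WeakInteraction}: since $r_n-r\to\infty$, the displayed $L^\infty$ norm tends to zero \emph{uniformly in the centers} $y_n$. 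Two applications of Plancherel's theorem together with the Cauchy--Schwarz inequality then yield
\[
\abs{\langle g_{n,1}\sigma\ast g_{n,1}\sigma,\,g_{n,2}\sigma\ast g_{n,2}\sigma\rangle_{L^2}} \leq \|g_{n,1}\sigma\ast g_{n,2}\sigma\|_{L^2}^2 \to 0,\quad\text{as }n\to\infty.
\]

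Combining these estimates with the sharp inequality \eqref{2-convolution-2} applied to each $g_{n,j}$ yields
\[
\|f_n\sigma\ast f_n\sigma\|_{L^2}^2 \leq \frac{\pi}{2}\bigl(\|g_{n,1}\|_{L^2}^4+\|g_{n,2}\|_{L^2}^4\bigr)+o_n(1)+O(\sqrt{\eps}).
\]
Letting $n\to\infty$ and using the extremizing property $\|f_n\sigma\ast f_n\sigma\|_{L^2}^2\to\pi/2$ together with the mass bounds \eqref{LionsSplitting}, we arrive at
\[
\frac{\pi}{2} \leq \frac{\pi}{2}\bigl(\alpha^2+(1-\alpha)^2\bigr)+O(\sqrt{\eps}).
\]
Since $\alpha\in(0,1)$, we have $\alpha^2+(1-\alpha)^2=1-2\alpha(1-\alpha)<1$, and therefore the inequality fails for all sufficiently small $\eps$. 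This contradiction shows that the splitting condition cannot hold, and completes the proof.

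The main obstacle is the uniformity of the weak-interaction bound with respect to the (possibly unbounded) centers $y_n$; this is precisely the content of Lemma \ref{WeakInteraction}(c), whose symmetry $\alpha(\xi,\tau,-\omega)=\alpha(\xi,\tau,\omega)$ makes the analysis translation-robust. All other ingredients are direct adaptations of tools already developed in Chapter \ref{sec:BehaviorExtSeq}.
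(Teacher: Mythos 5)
Your proof is correct and follows essentially the same approach as the paper's: you argue by contradiction from the splitting hypothesis, use the uniform-in-center weak-interaction estimate from Lemma \ref{WeakInteraction}(c) to kill the cross term $g_{n,1}\sigma\ast g_{n,2}\sigma$, and combine the sharp inequality applied to each piece with the extremizing property to reach $\pi/2\leq\tfrac{\pi}{2}(\alpha^2+(1-\alpha)^2)+O(\sqrt\eps)$, which fails for small $\eps$ since $\alpha\in(0,1)$. The only cosmetic difference is that you carry out the final arithmetic slightly more explicitly (phrasing the contradiction via $\alpha^2+(1-\alpha)^2<1$), whereas the paper invokes the argument template from Lemma \ref{NoSplitting_part2} to obtain $\limsup_n\|g_{n,1}\|^2_{L^2}\|g_{n,2}\|^2_{L^2}\leq C\eps^{1/2}$ and hence $(1-\alpha-\eps)(\alpha-\eps)\leq C\eps^{1/2}$; the two are algebraically equivalent.
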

\begin{proof}[Sketch of proof]
 Aiming at a contradiction, suppose that the extremizing sequence $\{f_n\}$ satisfies the splitting condition for a given $\alpha\in(0,1)$. 
Let $\eps>0$, and suppose that there exist
$r>0$, $\{y_n\}\subset\R^2$, and $\{r_n\}\subset \R$, for which condition \eqref{LionsSplitting} holds.
Decompose $f_{n}=g_{n,1}+g_{n,2}+h_n$, where $\norma{h_n}_{L^2}^2\leq \eps$.  Part (c) of Lemma \ref{WeakInteraction} implies the uniform estimate
\begin{align*}
 \norma{g_{n,1}\sigma\ast g_{n,2}\sigma}_{L^2}^2
&\leq\rho(r,r_n)^2\norma{g_{n,1}}_{L^2}^2\norma{g_{n,2}}_{L^2}^2,
\end{align*}
where the function
$$\rho(r,r_n):=\sup_{y\in\R^2}\norma{\one_{B_r(y)}\sigma\ast\one_{\R^2\setminus 
B_{r_n}(y)}\sigma}_{L^\infty}^{\frac12}$$
 satisfies $\rho(r,r_n)\to0$, as $n\to\infty$. By an argument similar to the one following \eqref{InterGoToZero} in the proof of Lemma \ref{NoSplitting_part2},  we obtain  
\[\limsup_{n\to\infty}\norma{g_{n,1}}_{L^2}^2\norma{g_{n,2}}_{L^2}^2\leq C\limsup_{n\to\infty}\rho(r,r_n)+C\eps^{\frac 12},	\]
for a universal constant $C<\infty$. We conclude that
\[(1-\alpha-{\eps})(\alpha-{\eps})\leq 
C\eps^{\frac 12},\]
which yields the desired contradiction if $\eps$ is chosen 
small enough, \mbox{depending on $\alpha$.}
\end{proof}

\noindent We finish this chapter by reformulating some of our conclusions in the language of the original concentration-compactness principle of Lions, according to which 
three scenarios may occur: (I) {\it compactness}, (II) {\it vanishing}, or (III) {\it dichotomy}. See \cite[Lemma I.1]{Li} for the precise 
definitions. Up to extraction of subsequences, an extremizing sequence for inequality \eqref{2-convolution-2} which satisfies condition (I) with 
respect to a bounded sequence will concentrate at a point. An extremizing sequence  which satisfies condition (II), or condition (I) with respect to 
an unbounded sequence, will concentrate at infinity. Condition (III) is only possible if neither of the  supports of the split sequence remains in a 
bounded region, in which case the extremizing sequence again concentrates at infinity. Furthermore, if condition (III) 
occurs, then condition (II) must also occur. In this case,  no fixed positive fraction of the $L^2$ mass of an extremizing 
sequence $\{f_n\}$ can remain on any ball of fixed radius, in the limit as $n\to\infty$. To see this, note that the 
proof of \cite[Lemma I.1]{Li}
implies that condition (III)
could otherwise be upgraded to the splitting condition considered above, which in light of Proposition \ref{NoLSplitting} does not hold for any extremizing sequence of \mbox{inequality  
\eqref{2-convolution-2}}.

\section{Sharp Strichartz inequalities}\label{sec:Strichartz}

In this chapter, we consider a number of sharp instances of the Strichartz inequalities \eqref{Strichartz}. 
All cases will follow a common pattern which we now illustrate by focusing on a particular example.
With this purpose in mind, let $\mu=1$ and consider a function $\phi$ as in the statement of Theorem \ref{main}. In this case,  inequality \eqref{Strichartz} can be restated as
\begin{equation}\label{IllustrateStrichartz}
\norma{\mathcal{F}(f(1+\ab{\cdot}^2)^{\frac14}\sigma_\phi)}_{L^4(\R^3)}\lesssim\norma{f}_{L^2(\R^2)}, 
\end{equation}
where the projection measure $\sigma=\sigma_\phi$ is defined in \eqref{defprojmeas}, and the subscript emphasizes that we are no longer taking $\phi=|\cdot|^4$ as in \eqref{quartic0_Intro}.
Inequality \eqref{IllustrateStrichartz} can be rewritten in sharp convolution form as
\begin{equation*}
\norma{f\sqrt{w}\sigma_\phi\ast f\sqrt{w}\sigma_\phi}_{L^2(\R^3)}
\leq 
\mathcal{S}_\phi^2\norma{f}_{L^2(\R^2)}^2,
\end{equation*}
with weight $w=(1+\ab{\cdot}^2)^{\frac12}$ and optimal constant $\mathcal{S}_\phi$. 
The usual Cauchy--Schwarz argument implies
\begin{equation}\label{weightedEstimate}
\norma{f\sqrt{w}\sigma_\phi\ast f\sqrt{w}\sigma_\phi}^2_{L^2(\R^3)}
\leq
\norma{w\sigma_\phi\ast w\sigma_\phi}_{L^\infty(\R^3)}\norma{f}_{L^2(\R^2)}^4,
\end{equation}
whence the upper bound
\begin{equation}
\label{upper-bounds-bc}
\mathcal S_\phi^4\leq \norma{w\sigma_\phi\ast w\sigma_\phi}_{L^\infty(\R^3)}.
\end{equation}
On the other hand, recall formulae \eqref{bdryvaluesDet} and \eqref{ConvFandG},
the boundary values of the  convolution measure $w\sigma_\phi\ast w\sigma_\phi$ are given by
\begin{equation}
\label{bdryValuesWeight}
\bigl(w\sigma_\phi\ast w\sigma_\phi\bigr)(\xi,2\psi(\xi/2))=\frac{\pi 
	w^2(\xi/2)}{\sqrt{\det(H(\psi)(\xi/2))}},
\end{equation}
where we set $\psi=|\cdot|^2+\phi$ as usual.
A slight modification of Lemma \ref{lemma-concentration-point} then yields the lower bound
\begin{equation}
\label{lower-bounds-bc}
\mathcal S_\phi^4\geq \sup_{\xi\in\R^2}\frac{\pi w^2(\xi)}{\sqrt{\det(H(\psi)(\xi))}}.
\end{equation}
Inequalities \eqref{upper-bounds-bc} and \eqref{lower-bounds-bc} provide upper and lower bounds for the value of the optimal constant $\mathcal S_\phi$.
If these bounds happen to coincide, then this determines the value of $\mathcal S_\phi$.
In this case, if the supremum in \eqref{upper-bounds-bc} is achieved {only} at the boundary of the support of the convolution measure, then extremizers are seen not to exist as before. 
In other cases, the following result will be useful in revealing some instances in which  inequality \eqref{lower-bounds-bc} may be strict. 

\begin{lemma}
	\label{lowerBoundExp}
	Given a strictly convex function $\Psi:\R^2\to\R$, consider the measure $\nu(y,t)=\ddirac{t-\Psi(y)}\d y\d t$. 
	Let $E$ denote the support of the convolution measure $\nu\ast\nu$.
	Given $s>0$ and a nonnegative function $w$ on $\R^2$, let $f_s(y)=e^{-s\Psi(y)}\sqrt{w(y)}$. 
	Then the following inequality holds, for every $f_s\in L^2(\R^2)$ for which $f_s\sqrt{w}\nu\ast f_s\sqrt{w}\nu\in L^2(\R^3)$:
	\begin{equation}
	\label{lowerBoundFunctional}
	\frac{\norma{f_s\sqrt{w}\nu\ast 
			f_s\sqrt{w}\nu}_{L^2(\R^3)}^2}{\norma{f_s}_{L^2(\R^2)}^4}
	\geq\frac{\norma{f_s}_{L^2(\R^2)}^4}{\int_E e^{-2s\tau} \d\xi \d\tau}.
	\end{equation}
	 In particular,
	\[ 	\sup_{0\neq f\in L^2(\R^2)}\frac{\norma{f\sqrt{w}\nu\ast 
			f\sqrt{w}\nu}_{L^2(\R^3)}^2}{\norma{f}_{L^2(\R^2)}^4}
	\geq\sup_{s>0}\frac{\norma{f_s}_{L^2(\R^2)}^4}{\int_E e^{-2s\tau} \d\xi \d\tau}.\]
\end{lemma}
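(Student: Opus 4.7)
The plan hinges on an algebraic identity available for the specific choice $f_s=e^{-s\Psi}\sqrt{w}$. Substituting into the definition of the convolution and using that the delta function enforces $\Psi(y)+\Psi(\xi-y)=\tau$ on its support, the cross term $e^{-s\Psi(y)}e^{-s\Psi(\xi-y)}$ collapses to $e^{-s\tau}$ and factors out of the $y$-integral, yielding the pointwise identity
\begin{equation*}
(f_s\sqrt{w}\nu\ast f_s\sqrt{w}\nu)(\xi,\tau)=e^{-s\tau}(w\nu\ast w\nu)(\xi,\tau),
\end{equation*}
where both sides are viewed as densities with respect to Lebesgue measure on $\R^3$, supported in $E$.

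Setting $g:=f_s\sqrt{w}\nu\ast f_s\sqrt{w}\nu$ and $h:=w\nu\ast w\nu$, so that $g=e^{-s\tau}h$ on $E$, the Cauchy--Schwarz inequality applied to the pair of functions $(e^{-s\tau},g)$ on $E$ gives
\begin{equation*}
\Big(\int_E e^{-s\tau}g(\xi,\tau)\,\d\xi\,\d\tau\Big)^{2}\leq \Big(\int_E e^{-2s\tau}\,\d\xi\,\d\tau\Big)\Big(\int_E g(\xi,\tau)^2\,\d\xi\,\d\tau\Big).
\end{equation*}
By the factorization, the left-hand side equals $\big(\int_E e^{-2s\tau}h(\xi,\tau)\,\d\xi\,\d\tau\big)^2$. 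Fubini's theorem together with the defining property of the convolution of measures on $\R^3$ evaluates this integral explicitly:
\begin{equation*}
\int_E e^{-2s\tau}\,h(\xi,\tau)\,\d\xi\,\d\tau=\int_{\R^2}\int_{\R^2} e^{-2s(\Psi(y)+\Psi(z))}w(y)w(z)\,\d y\,\d z=\norma{f_s}_{L^2(\R^2)}^4.
\end{equation*}
Rearranging the resulting chain of inequalities is precisely \eqref{lowerBoundFunctional}, and the final supremum-over-$s$ assertion follows immediately by taking the supremum on both sides.

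No serious obstacle arises: the identity linking $g$ to $e^{-s\tau}h$ is a direct consequence of the Gaussian-type structure of $f_s$, and the Cauchy--Schwarz inequality is applied with the natural weight $e^{-s\tau}$, so that $\int_E e^{-2s\tau}\,\d\xi\,\d\tau$ appears automatically on the right-hand side of \eqref{lowerBoundFunctional}. The only mildly delicate point is recognizing that the correct Cauchy--Schwarz pairing is between $e^{-s\tau}$ and $g$ itself (not between $g$ and a power of $h$, nor between a constant and $g$), which is precisely what makes the resulting cross integral evaluate cleanly to $\norma{f_s}_{L^2(\R^2)}^4$ via Fubini.
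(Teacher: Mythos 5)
Your proof is correct and is essentially the same argument as in the paper: you apply Cauchy--Schwarz with the weight $e^{-s\tau}$ paired against $g:=f_s\sqrt{w}\nu\ast f_s\sqrt{w}\nu$, and evaluate the cross term $\int_E e^{-s\tau}g=\int_E e^{-2s\tau}(w\nu\ast w\nu)=\norma{f_s}_{L^2}^4$. The only cosmetic difference is that the paper names the integrand $e^{-2s\tau}(w\nu\ast w\nu)$ as $f_s^2\nu\ast f_s^2\nu$ and invokes $\int_{\R^3}(f_s^2\nu\ast f_s^2\nu)=\norma{f_s}_{L^2}^4$, whereas you compute the same double integral directly from the definition of the convolution measure.
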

\begin{proof}
	For simplicity set $s=1$, the general case being similar. Note that the function
	$f(y)=e^{-\Psi(y)}\sqrt{w(y)}$ coincides with 
	$e^{-t}\sqrt{w(y)}$ 
	on the support of the measure $\nu$. 
	Therefore, the following identities hold:
	\begin{gather*}
	(f\sqrt{w}\nu\ast f\sqrt{w}\nu)(\xi,\tau)=e^{-\tau}(w\nu\ast w\nu)(\xi,\tau),\\
	(f^2\nu\ast f^2\nu)(\xi,\tau)=e^{-\tau}(f\sqrt{w}\nu\ast f\sqrt{w}\nu)(\xi,\tau).
	\end{gather*}
Together with
	\[ \int_{\R^3}(f^2\nu\ast f^2\nu)(\xi,\tau)\d\xi \d\tau
	=\norma{f}_{L^2}^4,\]
	the preceding identities and the Cauchy--Schwarz inequality then imply
	\begin{align*}
	\norma{f}_{L^2}^4
	&=\int_{\R^3} e^{-\tau}(f\sqrt{w}\nu\ast f\sqrt{w}\nu)(\xi,\tau) \d\xi \d\tau\\
	&\leq\Bigl(\int_{E} e^{-2\tau} \d\xi \d\tau\Bigr)^{\frac12}\norma{f\sqrt{w}\nu\ast
		f\sqrt{w}\nu}_{L^2},
	\end{align*}
	from which \eqref{lowerBoundFunctional} easily follows. 
	This completes the proof of the lemma.
\end{proof}

\subsection{Quartic perturbations}\label{sec:QuarticPert} 
We consider a slight generalization of inequality \eqref{quartic0_Intro}, given for $a\geq 0$  by
\[ \norma{\mathcal{F}(f(1+a\ab{\cdot}^2)^{\frac14}\sigma)}_{L^4(\R^3)}
\lesssim\norma{f}_{L^2(\R^2)}, \]
where the measure $\sigma$ is again given by $\sigma(y,t)=\ddirac{t-\ab{y}^2-\ab{y}^4} \d y\d t$.
This inequality can be equivalently rewritten in sharp form as
\begin{equation}
\label{quartic-ss-a}
\norma{f\sqrt{w_a}\sigma\ast f\sqrt{w_a}\sigma}_{L^2(\R^3)}
\leq 
\mathcal{S}_{a}^2\norma{f}_{L^2(\R^2)}^2,
\end{equation}
with weight $w_a=(1+a\ab{\cdot}^2)^{\frac12}$ and optimal constant $\mathcal{S}_{a}$. 
With the notation just introduced, we have the following result, which specialized to $a=1$ yields \mbox{Theorem \ref{bestConstantQuarticIntro}.}

\begin{teo}
	\label{bestConstantQuartic}
	If $0\leq a\leq 2$, then the value of the optimal constant for inequality \eqref{quartic-ss-a} is given by
 $\mathcal{S}^4_{a}=\frac{\pi}{2}.$ 
	Moreover, extremizers for inequality \eqref{quartic-ss-a} do not exist, and extremizing sequences concentrate
	at the origin. If $a>2$, then the following estimates hold:
	\begin{equation}
	\label{boundsAgeq2}
	\max\Bigl\{\frac{\pi}{2},
	\frac{a\sqrt{2\pi}}{8}\Gamma\Bigl(\frac{3}{4}\Bigr)^2\Bigr\}\leq
	\mathcal{S}^4_{a}\leq 
	\frac{a\pi}{4}. 
	\end{equation}
\end{teo}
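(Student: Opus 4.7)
The plan is to apply the Cauchy--Schwarz/Foschi paradigm already used in the preceding chapters, with an explicit pointwise computation that singles out the threshold $a=2$. Writing $\psi=|\cdot|^2+|\cdot|^4$ as usual, a straightforward weighted analog of Lemma \ref{lemma-concentration-point} produces, for every $a\ge 0$, an extremizing sequence concentrating at the origin (where $H(\phi)(0)=0$), yielding the lower bound
\[\mathcal S_a^4\ge \frac{\pi w_a^2(0)}{\sqrt{\det H(\psi)(0)}}=\frac{\pi}{2}.\]

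For the upper bound I would start from the standard Cauchy--Schwarz estimate $\mathcal S_a^4\le \|w_a\sigma\ast w_a\sigma\|_{L^\infty(\R^3)}$ and unpack the convolution via the polar formula underlying \eqref{ConvFandG}: with $A=\xi/2+\alpha\omega$ and $B=\xi/2-\alpha\omega$,
\[(w_a\sigma\ast w_a\sigma)(\xi,\tau)=\int_{\mathbb{S}^1}\frac{w_a(A)w_a(B)}{D(\xi,\alpha,\omega)}\,\d\mu_\omega,\qquad D=\int_{-1}^1\langle\omega,H(\psi)(\xi/2+t\alpha\omega)\cdot\omega\rangle\,\d t.\]
Since $H(\psi)(x)=(2+4|x|^2)I+8\,x\cdot x^T$, the denominator evaluates explicitly to $D=4+2|\xi|^2+8\alpha^2+4\langle\xi,\omega\rangle^2$, and the AM--GM inequality applied to $(1+a|A|^2)(1+a|B|^2)$ combined with the identity $|A|^2+|B|^2=|\xi|^2/2+2\alpha^2$ yields $w_a(A)w_a(B)\le 1+\tfrac{a}{4}|\xi|^2+a\alpha^2$. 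A short algebraic manipulation then reduces the pointwise inequality $w_a(A)w_a(B)/D\le 1/4$ to
\[(a-2)\bigl(|\xi|^2+4\alpha^2\bigr)\le 4\langle\xi,\omega\rangle^2,\]
which is automatic for $a\le 2$ since the left-hand side is then nonpositive. Integrating over $\mathbb{S}^1$ gives $\|w_a\sigma\ast w_a\sigma\|_{L^\infty}\le \pi/2$, closing the identity $\mathcal S_a^4=\pi/2$ in this regime.

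Tracking the chain of inequalities also shows that equality forces AM--GM to be tight ($|A|=|B|$) and $\langle\xi,\omega\rangle=0$: for $a<2$ this reduces to the single point $\xi=0$, $\alpha=0$, while for $a=2$ it picks out the ray $\{\xi=0\}$, in either case a null set in $\R^3$. The concluding Cauchy--Schwarz--H\"older argument from the proof of Theorem \ref{main} then rules out extremizers. To show that extremizing sequences concentrate at the origin I would adapt the concentration-compactness machinery of Chapter \ref{sec:BehaviorExtSeq}: since the weighted boundary value $\pi w_a^2(\xi/2)/\sqrt{\det H(\psi)(\xi/2)}$ is strictly less than $\pi/2$ for every $\xi\ne 0$ when $a\le 2$, the origin is the only viable concentration point and concentration at infinity is precluded by a condition analogous to \eqref{coercivity_v3}.

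For the regime $a>2$, an analogous manipulation yields $w_a(A)w_a(B)/D\le a/8$ as soon as $a(1+\langle\xi,\omega\rangle^2)\ge 2$, giving $\mathcal S_a^4\le \pi a/4$. For the lower bounds in \eqref{boundsAgeq2}, the value $\pi/2$ is still available from the origin-concentrating sequence, while the bound $\tfrac{a\sqrt{2\pi}}{8}\Gamma(3/4)^2$ follows from Lemma \ref{lowerBoundExp} applied with $\Psi=\psi$, $w=w_a$ and $f_s(y)=e^{-s\psi(y)}\sqrt{w_a(y)}$, in the regime $s\to 0^+$: here both $\|f_s\|_{L^2}^2$ and $\int_E e^{-2s\tau}\d\xi\d\tau$ are dominated by $|y|\to\infty$, where $\psi(y)\sim |y|^4$ and $w_a(y)\sim \sqrt{a}\,|y|$, and a direct Gamma-function computation produces precisely $\tfrac{a\sqrt{2\pi}}{8}\Gamma(3/4)^2$ as the limiting ratio. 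The main obstacle in the whole scheme is the sharp pointwise inequality $w_a(A)w_a(B)/D\le 1/4$: its borderline structure — the coefficients of $|\xi|^2$ and $\alpha^2$ in $D$ exactly absorbing those of the AM--GM majorant precisely when $a\le 2$ — is what pinpoints $a=2$ as the transition value.
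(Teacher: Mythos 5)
Your argument is correct and follows the paper's overall architecture (Cauchy--Schwarz for the upper bound, boundary-value computation for the lower bound, Lemma \ref{lowerBoundExp} for the improved lower bound when $a>2$), but the key pointwise step is handled by a genuinely different device. The paper computes the numerator $w_a(A)w_a(B)$ \emph{exactly} and compares its square to the square of the denominator, which leads to the slightly opaque inequality \eqref{basicineq}. You instead discard the cross term at the outset via AM--GM, reducing the pointwise bound $w_a(A)w_a(B)/D\leq 1/4$ to the linear inequality $(a-2)(|\xi|^2+4\alpha^2)\leq 4\langle\xi,\omega\rangle^2$, which is manifestly true for $a\leq 2$ and manifestly singles out $a=2$ as the threshold. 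This costs nothing for $a\leq 2$ (AM--GM is exact precisely on the equality set $\{\xi=0\}$ or $\{\xi=0,\alpha=0\}$, so the measure-zero characterization and the nonexistence argument survive intact), and for $a>2$ it again recovers the upper bound $\pi a/4$ since $a(1+\langle\xi,\omega\rangle^2)\geq a>2$ is automatic. Your lower bound for $a>2$ via Lemma \ref{lowerBoundExp} and the $s\to 0^+$ Gamma-function limit reproduces $\frac{a\sqrt{2\pi}}{8}\Gamma(3/4)^2$ exactly as in the paper. The AM--GM route is arguably cleaner because the borderline $a=2$ drops out transparently rather than being buried in a quadratic inequality; the trade-off is that it gives a slightly weaker pointwise bound than the paper's exact formula \eqref{numerator}, though the two coincide where it matters (on the equality set). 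The only loose end is the concentration-at-the-origin claim: you invoke strict decrease of the boundary value for $\xi\neq 0$ plus a coercivity condition analogous to \eqref{coercivity_v3}, which is plausible but would need the full concentration-compactness machinery (Lemmas \ref{improvement-implies-concentration}--\ref{NoSplitting_part2} in the weighted setting) to be airtight; however the paper itself defers this to an ``analogous'' argument, so the omission is on par with the source.
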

\begin{proof}
For every $a\geq 0$, the trivial estimate
\[ \norma{f\sigma\ast f\sigma}_{L^2}\leq \norma{\ab{f}\sqrt{w_a}\sigma\ast \ab{f}\sqrt{w_a}\sigma}_{L^2} \]
and Theorem \ref{main} together imply  that $\mathcal{S}^4_{a}\geq \frac{\pi}2$.
This lower bound coincides with the value of the right-hand side of \eqref{lower-bounds-bc} in the special case when $\phi=|\cdot|^4$.
 It follows that
\begin{equation}
\label{lowerupperBoundR}
\frac{\pi}{2}\leq \mathcal{S}^4_{a}\leq \norma{w_a\sigma\ast 
w_a\sigma}_{L^\infty},
\end{equation}
for every $ a\geq 0$.
We are thus reduced to  studying the convolution measure $w_a\sigma\ast w_a\sigma$. 
Formulae \eqref{preHessian} and \eqref{ConvFandG} imply 
\begin{equation}\label{weightedQuartic}
(w_a\sigma\ast w_a\sigma)(\xi,\tau)
=
\int_{\mathbb{S}^1}
\frac{w_a(\xi/2+\alpha\omega)w_a(\xi/2-\alpha\omega)}{
\Big\langle \omega, \frac{\nabla \psi(\xi/2+\alpha\omega)-\nabla \psi(\xi/2-\alpha\omega)}{\alpha}\Big\rangle}
\d\mu_\omega,
\end{equation}
where $\psi=|\cdot|^2+|\cdot|^4$, and the function $\alpha=\alpha(\xi,\tau,\omega)$ is given by \eqref{alphaDef}. 
A straightforward computation shows that the numerator of the integrand in \eqref{weightedQuartic} equals
\begin{equation}\label{numerator}
w_a(\xi/2+\alpha\omega)w_a(\xi/2-\alpha\omega)
=\bigl((1+a(\ab{\xi/2}^2+\alpha^2))^2-a^2\alpha^2\langle\xi,\omega\rangle^2\bigr)^{\frac12}, 
\end{equation}
while the denominator equals
\begin{equation}\label{denominator}
\Big\langle \omega, \frac{\nabla \psi(\xi/2+\alpha\omega)-\nabla 
\psi(\xi/2-\alpha\omega)}{\alpha}\Big\rangle 
=4\bigl(1+2(\ab{\xi/2}^2+\alpha^2)+\langle\xi,\omega\rangle^2\bigr).
\end{equation}
We split the analysis in two cases.

{\bf Case 1:}
$0\leq a\leq 2$. 
To compare \eqref{numerator} and \eqref{denominator}, note that the inequality
\begin{equation}\label{basicineq}
\Bigl(1+a\bigl(\ab{\xi/2}^2+\alpha^2\bigr)\Bigr)^2-a^2\alpha^2\langle\xi,\omega\rangle^2\leq 
\Bigl(1+2\bigl(\ab{\xi/2}^2+\alpha^2\bigr)+\langle\xi,\omega\rangle^2\Bigr)^2
\end{equation}
 holds for every $a\in[0,2],\,\xi\in\R^2,\,\omega\in \mathbb S^1$ and $\alpha\geq 0$.
Moreover, necessary and sufficient conditions for
equality in \eqref{basicineq} to hold for every $\omega\in\mathbb S^1$ are $\xi=0$ when $a=2$, and $\xi=0$ and $\alpha=0$
when $a<2$.
It follows that $\frac14$ is an upper bound for the integrand in \eqref{weightedQuartic}.
Therefore, for every $(\xi,\tau)\in\R^{2+1}$, 
\begin{equation}
\label{upperBoundConv-a}
(w_a\sigma\ast w_a\sigma)(\xi,\tau)\leq \frac{\pi}{2}.
\end{equation}
Moreover, this inequality turns into an equality if and only if $(\xi,\tau)=(0,0)$ when $a<2$, 
and if and only if  $\xi=0$ when $a=2$. To justify this, note that
\[ (w_a\sigma\ast w_a\sigma)(0,\tau)
=\int_{\R^2}\ddirac{\tau-2(\ab{y}^2+\ab{y}^4)}w_a^2(y)\d y
=\frac{\pi}{2}\Bigl(\frac{a}{2}+\frac{1-\frac{a}{2}}{\sqrt{2\tau+1}}\Bigr)\one_{\{\tau\geq 
0\}}(\tau), \]
which specializes to
\[ (w_2\sigma\ast w_2\sigma)(0,\tau)= \frac{\pi}{2}\one_{\{\tau\geq 0\}}(\tau).\]
As a consequence of estimates \eqref{lowerupperBoundR} and \eqref{upperBoundConv-a}, we conclude that $\mathcal{S}^4_{a}= \frac{\pi}{2}$, for every $0\leq a\leq 2$.
Nonexistence of extremizers is a consequence of inequality \eqref{upperBoundConv-a} being strict at almost every point, as in the proof of Theorem \ref{main}. 
Concentration at the origin can likewise be established in an analogous manner.
We point out that the normalized sequence 
$\{f_n/\norma{f_n}_{L^2}\}$, where $f_n(y)=\exp(-n(\ab{y}^2+\ab{y}^4))$, is extremizing  for inequality \eqref{quartic-ss-a}. 

{\bf Case 2:}  $a>2$. Start by noting that the inequality 
\[{\Bigl(1+a\bigl(\ab{{\xi}/{2}}^2+\alpha^2\bigr)\Bigr)^2-a^2\alpha^2\langle\xi,\omega\rangle^2}
\leq 
\frac{a^2}{4} {\Bigl(1+2\bigl(\ab{{\xi}/{2}}^2+\alpha^2\bigr)+\langle\xi,\omega\rangle^2\Bigr)^2}\]
holds for every $a>2,\,\xi\in\R^2,\,\omega\in \mathbb S^1$ and $\alpha\geq 0$. It follows that
\[ (w_a\sigma\ast w_a\sigma)(\xi,\tau)\leq \frac{a\pi}{4}, \]
yielding the upper bound $\mathcal S^4_{a}\leq \frac{a\pi}4$. On the other hand, along the boundary of the support of $w_a\sigma\ast w_a\sigma$,
we have that
\begin{multline*}
(w_a\sigma\ast w_a\sigma)(\xi,2\psi(\xi/2))
=\frac{\pi w^2_a(\xi/2)}{\sqrt{\det(H(\psi)(\xi/2))}}
=\frac{\pi(1+a\ab{\xi/2}^2)}{2\sqrt{(1+2\ab{\xi/2}^2)(1+6\ab{\xi/2}^2)}},
\end{multline*}
and therefore
\[ \mathcal S_{a}^4\geq\sup_{r\geq0} 
\frac{\pi(1+ar)}{2\sqrt{(1+2r)(1+6r)}}\geq \max\Bigl\{\frac{\pi}{2},
\frac{a\pi}{4\sqrt{3}}\Bigr\}. \]
This yields the preliminary bounds
\begin{equation}
\label{prelimBoundRa}
\max\Bigl\{\frac{\pi}{2},
\frac{a\pi}{4\sqrt{3}}\Bigr\}\leq
\mathcal{S}^4_{a}\leq 
\frac{a\pi}{4}. 
\end{equation}
 The lower bound can be sharpened by invoking Lemma \ref{lowerBoundExp}. 
With this purpose in mind, let $f_s(y)=e^{-s\psi(y)}\sqrt{w_a(y)}$.
Its $L^2$ norm is given by
\begin{align*}
\norma{f_s}_{L^2}^2=\int_{\R^2}e^{-2s(\ab{y}^2+\ab{y}^4)}(1+a\ab{y}^2)^{\frac12}\d y
=\pi\int_0^\infty e^{-2s(r+r^2)}(1+ar)^{\frac12}\d r.
\end{align*}
On the other hand, letting $E$ denote the support of the measure $\sigma\ast\sigma$,
\begin{align*}
\int_E e^{-2s\tau}\d\xi \d\tau
=\int_{\R^2}\Big(\int_{2(\ab{\frac{\xi}2}^2+\ab{\frac{\xi}2}^4)}^{\infty} e^{-2s\tau}\d\tau\Big) \d\xi
=\frac{2\pi}{s}\int_0^\infty e^{-4s(r+r^2)}\d r.
\end{align*}
It follows that
\[ \mathcal S_{a}^4\geq \sup_{s>0}\frac{\pi s}{2}\frac{\bigl(\int_0^\infty 
e^{-2s(r+r^2)}(1+ar)^{\frac12}\d r\bigr)^2}{\int_0^\infty e^{-4s(r+r^2)}\d r}. \]
The limit as $s\to 0^+$
of the expression inside this supremum is easily calculated via a change of variables $u=\sqrt{s}r$, yielding
\[ \mathcal S_{a}^4\geq \frac{a\pi}{2}\frac{\bigl(\int_0^\infty 
	e^{-2u^2}u^{\frac 12}\d u\bigr)^2}{\int_0^\infty 
	e^{-4u^2}\d u}=\frac{a\sqrt{2\pi}}{8}\Gamma\Bigl(\frac{3}{4}\Bigr)^2.  \]
\noindent Since $\frac{\sqrt{2\pi}}{8}\Gamma\bigl(\frac{3}{4}\bigr)^2>\frac{\pi}{4\sqrt{3}}$, this indeed sharpens the lower bound in 
\eqref{prelimBoundRa}, and the proof is complete.
\end{proof}

\begin{remark}\label{remark64}
We can consider more general perturbations $\Psi=\ab{\cdot}^2+\ab{\cdot}^4+\phi$, 
with $\phi$ as in the statement of Theorem \ref{main}, satisfying $H(\phi)(0)=0$. 
These correspond to perturbations of the cases considered in Theorem \ref{bestConstantQuartic}. 
Letting $\sigma_\Psi(y,t)=\ddirac{t-\Psi(y)}\d y\d t$,
a similar analysis reveals that, for every $a\in[0,2]$, the sharp inequality
\begin{equation}
\label{quartic_ss_a2}
\norma{f\sqrt{w_a}\sigma_\Psi\ast f\sqrt{w_a}\sigma_\Psi}^2_{L^2(\R^3)}
\leq 
\frac{\pi}2\norma{f}_{L^2(\R^2)}^4
\end{equation}
holds,  extremizers do not exist, and  extremizing sequences concentrate at the origin. 
\end{remark}

\subsection{Convolutions of pure powers} \label{sec:ConvPP}
In this section, we study the convolution of the projection measure 
\begin{equation}\label{defnup}
\nu_p(y,t)=\ddirac{t-\ab{y}^p} \d y \d t,
\end{equation}
where $p\geq 2$ and $(y,t)\in\R^{2+1}$. A scaling 
argument shows that there exists a unique possible Strichartz estimate  in $L^4(\R^3)$, namely
\begin{equation}\label{onepossiblepureStrichartz}
\norma{\mathcal F(f\ab{\cdot}^{\frac{p-2}4}\nu_p)}_{L^4(\R^3)}\lesssim\norma{f}_{L^2(\R^2)}.
\end{equation}
\noindent As before, the analysis of the sharp form of inequality \eqref{onepossiblepureStrichartz} leads to the study of the  convolution measure $w\nu_p\ast w\nu_p$, 
with weight $w=\ab{\cdot}^{\frac{p-2}2}$. We record its main properties in the following result, which should be compared to Proposition \ref{ConvolFormulaProp}.

\begin{prop}
	\label{convPowers}
	Given $p\geq 2$, let $w=\ab{\cdot}^{\frac{p-2}2}$. 
	Let $\nu_p$ be the measure defined by \eqref{defnup}.
	Then the following assertions hold for the convolution measure $w\nu_p\ast w\nu_p$:
	\begin{itemize}
		\item[(a)] It is absolutely continuous with respect to Lebesgue measure on $\R^3$.
		\item[(b)] Its support, denoted $E_p$, is given by
		\[ E_p=\{(\xi,\tau)\in\R^{2+1}:\tau\geq {2^{1-p}}{\ab{\xi}^p}\}. \]
		\item[(c)] Its Radon--Nikodym derivative, also denoted by $w\nu_p\ast w\nu_p$, defines a bounded 
		continuous function in the interior of the set $E_p$.
		\item[(d)] It is radial in $\xi$, and homogeneous of
		degree zero in the sense that
		$$(w\nu_p\ast w\nu_p)(\la\xi,\la^p\tau)=(w\nu_p\ast w\nu_p)(\xi,\tau), 
		\text{ for every } \la>0.$$
		\item[(e)] It extends continuously to the boundary of $E_p$, except at the point $(\xi,\tau)=(0,0)$, with values given by
		\[(w\nu_p\ast w\nu_p)(\xi,{{2^{1-p}}\ab{\xi}^p})=\frac{\pi}{p\sqrt{p-1}}, \text{ if } \xi\neq 
		0.\]
		\item[(f)] If $p>2$, then the maximum value of $w\nu_p\ast w\nu_p$ is only attained along the vertical axis	$\{(0,\tau): \tau> 0\}$, where it equals $\frac{\pi}p$.
	\end{itemize}
\end{prop}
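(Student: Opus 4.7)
The plan is to proceed in parallel with the proof of Proposition \ref{ConvolFormulaProp}, replacing the convex profile $|\cdot|^2+\phi$ by the pure power $\psi=|\cdot|^p$ and carrying the locally integrable weight factor $(|y||\xi-y|)^{(p-2)/2}$ through. For (a), the change of variables $(y,z)\mapsto((y+z)/2,(y-z)/2)$ from \cite[Lemma 3.1(b)]{BM}, combined with strict convexity of $s_1\mapsto|(t+s)/2|^p+|(t-s)/2|^p$ (valid for $p>1$, with $s_2,t$ fixed), shows that the pairing $\langle w\nu_p\ast w\nu_p,\mathbbm{1}_E\rangle$ vanishes on any Lebesgue null set $E$. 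For (b), midpoint convexity of $|\cdot|^p$ yields $|y|^p+|\xi-y|^p\geq 2^{1-p}|\xi|^p$ on the support, while the reverse inclusion follows from the intermediate value theorem applied to the continuous coercive map $y\mapsto|y|^p+|\xi-y|^p$.

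For (c), I would adapt the transformation argument from the proof of Proposition \ref{ConvolFormulaProp}(c) with $\psi=|\cdot|^p$ to derive the integral formula
\[(w\nu_p\ast w\nu_p)(\xi,\tau)=\int_{\mathbb{S}^1}\frac{w(\xi/2+\alpha\omega)\,w(\xi/2-\alpha\omega)}{\int_{-1}^1\langle\omega,H(|\cdot|^p)(\xi/2+t\alpha\omega)\cdot\omega\rangle\, dt}\,d\mu_\omega\]
on the interior of $E_p$, where $\alpha=\alpha(\xi,\tau,\omega)>0$ is the analogue of \eqref{alphaDef}. Continuity and boundedness then follow from continuity of $\alpha$ and strict positivity of $\langle\omega,H(|\cdot|^p)(y)\cdot\omega\rangle=p(p-2)|y|^{p-4}\langle y,\omega\rangle^2+p|y|^{p-2}$ for $y\neq 0$; the vanishing of $w$ at the origin offsets the degeneracy of the Hessian in the exceptional configurations where the integration segment touches $0$. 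Part (d) follows from the change of variables $y\mapsto\lambda y$ in the defining integral (using $|\lambda y|^p=\lambda^p|y|^p$, $w(\lambda y)=\lambda^{(p-2)/2}w(y)$, and the scaling of the Dirac delta) together with rotational invariance of Lebesgue measure. Part (e) follows from the boundary value formula analogous to \eqref{bdryvaluesDet},
\[(w\nu_p\ast w\nu_p)(\xi,2^{1-p}|\xi|^p)=\frac{\pi\, w(\xi/2)^2}{\sqrt{\det H(|\cdot|^p)(\xi/2)}},\qquad\xi\neq 0,\]
combined with the diagonalization of $H(|\cdot|^p)(\xi/2)$, whose eigenvalues $p(p-1)(|\xi|/2)^{p-2}$ and $p(|\xi|/2)^{p-2}$ give $\sqrt{\det H(|\cdot|^p)(\xi/2)}=p\sqrt{p-1}(|\xi|/2)^{p-2}$, hence the stated value $\pi/(p\sqrt{p-1})$.

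Part (f) is where the main work lies. The value on the vertical axis is an immediate polar-coordinate computation: $(w\nu_p\ast w\nu_p)(0,\tau)=2\pi\int_0^\infty r^{p-1}\delta(\tau-2r^p)\,dr=\pi/p$ for every $\tau>0$. To establish the strict upper bound off the axis when $p>2$, I would rewrite the integrand of the formula in (c) using $\nabla(|\cdot|^p)(y)=p|y|^{p-2}y$ and $y_\pm:=\xi/2\pm\alpha\omega$, so the claim that it is bounded by $1/(2p)$ becomes the pointwise algebraic inequality
\[|u-v|^2(|u||v|)^{(p-2)/2}\leq\langle u-v,\,|u|^{p-2}u-|v|^{p-2}v\rangle,\qquad u,v\in\R^2,\]
with equality precisely when $|u|=|v|$. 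To verify it, set $r=|u|$, $s=|v|$, $t=\langle u,v\rangle$; after expansion one finds $r^p+s^p-(r^2+s^2)(rs)^{(p-2)/2}=(r^{(p-2)/2}-s^{(p-2)/2})(r^{(p+2)/2}-s^{(p+2)/2})$ and $(r^{p-2}+s^{p-2})-2(rs)^{(p-2)/2}=(r^{(p-2)/2}-s^{(p-2)/2})^2$, reducing the claim to
\[(r^{(p-2)/2}-s^{(p-2)/2})\bigl(r^{(p+2)/2}-s^{(p+2)/2}\bigr)\geq t\,(r^{(p-2)/2}-s^{(p-2)/2})^2,\]
which follows from the Cauchy--Schwarz bound $t\leq rs$ together with the elementary factorization $r^{(p+2)/2}-s^{(p+2)/2}-r^{p/2}s+rs^{p/2}=(r^{p/2}+s^{p/2})(r-s)$. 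Applied with $u=y_+$ and $v=y_-$, the equality condition $|y_+|=|y_-|$ translates into $\langle\xi,\omega\rangle=0$; for $\xi\neq 0$ this holds only on a measure-zero subset of $\mathbb{S}^1$. Strict inequality in the integrand thus prevails $d\mu_\omega$-almost everywhere, and integration over $\mathbb{S}^1$ yields $(w\nu_p\ast w\nu_p)(\xi,\tau)<\pi/p$, as claimed.
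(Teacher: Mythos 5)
Your proposal is essentially correct and parallels the paper's argument, but with a coordinate-free formulation of the key step that is worth comparing. The paper derives the formula directly in explicit polar coordinates centered on the axis $\xi=2e_1$, arriving at formula \eqref{formulaPowerP} with the denominator expressed via $\vphi'_\te(r)$, whereas you work with the analogue of \eqref{ConvFandG}. The two are equivalent; your denominator written via the Fundamental Theorem of Calculus is the same as the paper's $\varphi_\theta'(r)$ after unwinding. For (f), your pointwise inequality $|u-v|^2(|u||v|)^{(p-2)/2}\leq\langle u-v,|u|^{p-2}u-|v|^{p-2}v\rangle$ is exactly the paper's inequality \eqref{numdenomineq} (restated in the $(\xi,2e_1)$ frame with $u=e_1+r\omega$, $v=e_1-r\omega$), and your factorization $r^p+s^p-(r^2+s^2)(rs)^{(p-2)/2}=(r^{(p-2)/2}-s^{(p-2)/2})(r^{(p+2)/2}-s^{(p+2)/2})$, together with $r^{(p+2)/2}-s^{(p+2)/2}-r^{p/2}s+rs^{p/2}=(r^{p/2}+s^{p/2})(r-s)$, gives a cleaner and fully explicit verification than the paper's two-step argument (drop the nonnegative cross-term, then apply AM--GM). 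The equality analysis ($|u|=|v|$, hence $\langle\xi,\omega\rangle=0$) also matches what is needed.

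Two remarks. First, when you say you will ``adapt the transformation argument from the proof of Proposition \ref{ConvolFormulaProp}(c),'' note that the pair $\psi=|\cdot|^p$, $\vphi=|\cdot|^2$ does \emph{not} satisfy the hypothesis of Lemma \ref{convexity} (the difference $|\cdot|^p-|\cdot|^2$ is not convex near the origin for $p>2$), so the bound $\la\in[0,1]$ of that lemma is unavailable. This does not affect the derivation of your integral formula, because that derivation only needs strict convexity of $|\cdot|^p$ so that for each $\omega$ the map $r\mapsto|\xi/2+r\omega|^p+|\xi/2-r\omega|^p$ is strictly increasing; one can indeed compute $\int_0^\infty\delta(\tau-G(r))f(r)\,dr=f(\alpha)/G'(\alpha)$ directly, and this is in effect what the paper does with the explicit change of variables $s=\vphi_\te(r)$. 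But it is worth flagging that a purely verbatim ``adaptation'' of Lemmas \ref{convexity}--\ref{contractive} would not go through. Second, your justification of continuity in (c) is a bit hand-wavy — ``the vanishing of $w$ at the origin offsets the degeneracy of the Hessian'' — and does not address the extra care required on the vertical axis $\{\xi=0\}$, where the paper passes through the homogeneity of (d) and a limiting computation as $\la=\tau/|\xi|^p\to\infty$. These are minor, but the published proof handles them more carefully via the Implicit Function Theorem applied to the explicit polar formula plus a separate limit for $\xi\to 0$.
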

\begin{proof}
Properties (a) and (b) follow as in the proof of Proposition \ref{ConvolFormulaProp}. 
Property (d) is also straightforward to check. 
We then start by showing that $w\nu_p\ast w\nu_p$ defines a continuous function inside its support. 
Reasoning as in \eqref{DeltaCalc}, we have that
$$(w\nu_p\ast w\nu_p)(\xi,\tau)
=\int_{\R^2}\ddirac{\tau-\ab{\tfrac{\xi}{2}+y}^p-\ab{\tfrac{\xi}{2}-y}^p}
\ab{\tfrac{\xi}{2}+y}^{\frac{p-2}2}\ab{\tfrac{\xi}{2}-y}^{\frac{p-2}2} \d y.$$
Write $\tau=\la\ab{\xi}^p$ with $\xi=2e_1$, 
where $e_1$ denotes the first canonical vector. Changing to polar coordinates with polar axis parallel to $e_1$, we obtain
$$(w\nu_p\ast w\nu_p)(\xi,\la\ab{\xi}^p)=
\int_0^{2\pi}\int_0^\infty\ddirac{2^p\la-2-\vphi_{\te}(r)}
((r^2+1)^2-4r^2\cos^2\te)^{\frac{p-2}4}r \d r\d\te,$$
where the function 
\begin{equation}
\label{implicitVphi}
\vphi_{\te}(r):=(r^2+1+2r\cos\te)^{\frac p2}+(r^2+1-2r\cos\te)^{\frac p2}-2
\end{equation}
is convex in the 
variable $r$ for each fixed $\te$, with unique global minimum at $r=0$ as a result of Lemma 
\ref{convexity}. A change of variables $s=\vphi_{\te}(r)$ yields
\begin{align}
(w\nu_p\ast w\nu_p)(\xi,\la\ab{\xi}^p)&=
\int_0^{2\pi}\int_0^\infty\ddirac{2^p\la-2-s}
\frac{((r^2+1)^2-4r^2\cos^2\te)^{\frac{p-2}4}r}{\vphi_\te'(r)}\d s\d\te\nonumber\\
\label{formulaPowerP}
&=\one_{\{\la\geq{2^{1-p}}\}}(\la)\int_0^{2\pi}((r^2+1)^2-4r^2\cos^2\te)^{\frac{p-2}4}
	\Bigl(\frac{r}{\vphi_\te'(r)}\Bigr)\, \d\te,
\end{align}
where $r=\vphi_{\te}^{-1}(2^p\la-2)$, and $\vphi_\te'$ denotes the  
derivative of the function $\vphi_\te$ with respect to $r$. A calculation shows that 
\begin{align}
\vphi_\te'(r)
&=pr\bigl((r^2+1+2r\cos\te)^{\frac{p-2}2}+(r^2+1-2r\cos\te)^{\frac{p-2}2}\bigr)\notag\\
&\quad+p\cos\te\bigl((r^2+1+2r\cos\te)^{\frac{p-2}2}-(r^2+1-2r\cos\te)^{\frac{p-2}2}\bigr),\label{phiprime}
\end{align}
which only vanishes at $r=0$.
Using part (d), we see that $(w\nu_p\ast w\nu_p)(\xi,\tau)=(w\nu_p\ast w\nu_p)(e_1, \ab{\xi}^{-p}{\tau})$, for every $\xi\neq 0$. Therefore continuity of the convolution measure at a point $(\xi,\tau)$  in the interior of the set $E_p$, for $\xi\neq 0$, follows from that of $(w\nu_p\ast w\nu_p)(e_1,\la)$, for $\la>2^{1-p}$. The latter is seen to hold via the Implicit Function Theorem, given that $r$ is a differentiable function of $\la$ and $\te$. As for continuity along the positive $\tau$-axis, note that, given $\tau>0$ and a sequence $(\xi_n,\tau_n)\to(0,\tau)$, as $n\to\infty$, with $\xi_n\neq 0$, for every $n$, we have
 \begin{equation}\label{verticalaxisvalue}
  (w\nu_p\ast w\nu_p)(\xi_n,\tau_n)
  =(w\nu_p\ast w\nu_p)(e_1,\tfrac{\tau_n}{\ab{\xi_n}^p})\to \int_0^{2\pi}\frac{\d\te}{2p}
  =\frac{\pi}{p},\text{ as }n\to\infty. 
  \end{equation}
 Here we used that $\la_n:=\frac{\tau_n}{\ab{\xi_n}^p}\to\infty$, and that $r=r(\la_n,\te)\to\infty$ for each fixed $\te$, as $n\to\infty$.
  Boundedness is a 
consequence of the inequality 
\begin{equation}\label{numdenomineq}
2pr(r^2+1+2r\cos\te)^{\frac{p-2}4}(r^2+1-2r\cos\te)^{\frac{p-2}4}\leq \vphi_\te'(r),
\end{equation}
which holds for every $r\geq 0$, $\te\in[0,2\pi]$ and $p\geq 2$. To verify \eqref{numdenomineq}, recall expression \eqref{phiprime} for $\vphi_\te'$, and note that, as long as $p\geq 2$,
\[ p\cos\te\bigl((r^2+1+2r\cos\te)^{\frac{p-2}2}-(r^2+1-2r\cos\te)^{\frac{p-2}2}\bigr)\geq 0, \]
for every $r\geq 0$ and $\te\in[0,2\pi]$.
This concludes the verification of  (c).
We can continuously extend the value of 
the function $(w\nu_p\ast w\nu_p)(\xi,\la\ab{\xi}^p)$ to $\la=2^{1-p}$ by noting that $r\to 0^+$ as $\la\to(2^{1-p})^+$. This yields the following value for the extension:
\[(w\nu_p\ast w\nu_p)(\xi,{2^{1-p}}\ab{\xi}^p)
=\int_0^{2\pi}\frac{\d\te}{\vphi_\te''(0)}
=\int_0^{2\pi}\frac{\d\te}{2p(1+(p-2)\cos^2\te)}
=\frac{\pi}{p\sqrt{p-1}}.\]
Note that this coincides with the value predicted by the analogous of formula \eqref{bdryValuesWeight}.
Property (e) is now proved.
Finally, if $p>2$, then  a discussion of the cases of equality in  \eqref{numdenomineq} reveals that the strict inequality
\begin{equation}\label{StrictOutsideVerticalAxis}
(w\nu_p\ast w\nu_p)(\xi,\tau)<
\frac{\pi}{p}\one_{\{\tau\geq{2^{1-p}}\ab{\xi}^p\}}(\xi,\tau)
\end{equation}
holds for every $(\xi,\tau)$ with $\xi\neq 0$.
Moreover, the value along the $\tau$-axis was already calculated in \eqref{verticalaxisvalue}, is alternatively given by
\[ (w\nu_p\ast w\nu_p)(0,\tau)
=\int_{\R^2}\ddirac{\tau-2\ab{y}^p}\ab{y}^{p-2}\d y=\frac{\pi}{p}\one_{\{\tau> 0\}}(\tau), \]
and therefore equals the maximum value. 
This concludes the verification of (f) and the proof of the proposition.
\end{proof}

\begin{remark}\label{boundsQp}
	The boundedness of $w\nu_p\ast w\nu_p$ given by part (c) of Proposition \ref{convPowers} implies the validity of  the  Strichartz estimate \eqref{onepossiblepureStrichartz}.
	Moreover, parts (e) and (f) imply that the optimal constant $\mathcal Q_p$ for the corresponding sharp inequality in convolution form,
	 	\begin{equation} \label{onepossiblepureStrichartz4}
	 \norma{f\sqrt{w}\nu_p\ast f\sqrt{w}\nu_p}_{L^2(\R^3)}\leq \mathcal Q_p^2\norma{f}_{L^2(\R^2)}^2, 
	 	\end{equation}
	satisfies
	\begin{equation}\label{lowerupperboundforQ}
	 \frac{\pi}{p\sqrt{p-1}}\leq \mathcal Q^4_p\leq \frac{\pi}{p}.
	 \end{equation}
	 	Contrary to the case of the quartic perturbation studied in \S
	\ref{sec:QuarticPert}, this does not determine 
	$\mathcal Q_p$ since the 
	upper and lower bounds do not coincide for $p>2$.
\end{remark}

\noindent  In order to sharpen the lower bound in \eqref{lowerupperboundforQ}, we will use the following straightforward consequence of Lemma \ref{lowerBoundExp}.
\begin{cor}
	\label{lowerBoundPurePowers}
	Given $p\geq 2$, let $\nu_p(y,t)=\ddirac{t-\ab{y}^p} \d y\d t$, and $w=\ab{\cdot}^{\frac{p-2}2}$. 	
	Then the following estimate holds for the function $f(y)=\exp(-\ab{y}^p)\ab{y}^{\frac{p-2}4}$:
	\begin{equation}
	\label{boundGammaPP}
	\frac{\norma{f\sqrt{w}\nu_p\ast 
			f\sqrt{w}\nu_p}_{L^2(\R^3)}^2}{\norma{f}_{L^2(\R^2)}^4}
	\geq\frac{\pi}{p2^{1-\frac{2}{p}}}\frac{\Gamma\bigl(\frac{1}{2}+\frac{1}{p}\bigr)^2}
	{\Gamma\bigl(\frac{2}{p}\bigr)}.
	\end{equation}
\end{cor}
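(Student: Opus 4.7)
The plan is to apply Lemma \ref{lowerBoundExp} directly with $\Psi(y)=\ab{y}^p$ (strictly convex for $p\geq 2$), weight $w=\ab{\cdot}^{(p-2)/2}$, and parameter $s=1$. This identifies $f_s$ with the function $f$ in the statement, and, in view of Proposition \ref{convPowers}(b), reduces the task to explicitly evaluating
\[
\norma{f}_{L^2(\R^2)}^2=\int_{\R^2} e^{-2\ab{y}^p}\ab{y}^{(p-2)/2}\,\d y
\quad\text{and}\quad
\int_{E_p} e^{-2\tau}\,\d\xi\,\d\tau,
\]
where $E_p=\{(\xi,\tau):\tau\geq 2^{1-p}\ab{\xi}^p\}$, and then plugging the ratio into \eqref{lowerBoundFunctional}.

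For the first integral, I would pass to polar coordinates and make the substitution $u=2r^p$, which after simple bookkeeping yields
\[
\norma{f}_{L^2(\R^2)}^2=\frac{\pi}{p}\,2^{\frac12-\frac1p}\,\Gamma\!\Bigl(\frac12+\frac1p\Bigr).
\]
For the second, I would integrate out $\tau$ first, obtaining
\[
\int_{E_p} e^{-2\tau}\d\xi\d\tau=\frac12\int_{\R^2} e^{-2^{2-p}\ab{\xi}^p}\d\xi,
\]
and then pass to polar coordinates with the substitution $v=2^{2-p}r^p$, arriving at
\[
\int_{E_p} e^{-2\tau}\d\xi\d\tau=\frac{\pi}{p}\,2^{2-\frac{4}{p}}\,\Gamma\!\Bigl(\frac{2}{p}\Bigr).
\]

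Taking the ratio $\norma{f}_{L^2}^4 / \int_{E_p} e^{-2\tau}\d\xi\d\tau$, the powers of $2$ collapse to $2^{-1+2/p}=1/2^{1-2/p}$, and the factor $\pi/p$ survives once, producing exactly
\[
\frac{\pi}{p\,2^{1-\frac2p}}\,\frac{\Gamma\bigl(\frac12+\frac1p\bigr)^2}{\Gamma\bigl(\frac2p\bigr)},
\]
which is the claimed lower bound by virtue of \eqref{lowerBoundFunctional}. The entire argument is mechanical once Lemma \ref{lowerBoundExp} is invoked; there is no genuine obstacle, only the bookkeeping of powers of $2$ and the Gamma arguments, and a brief check that $\ab{y}^p$ is strictly convex and that $f\sqrt{w}\nu_p\ast f\sqrt{w}\nu_p\in L^2(\R^3)$ (which follows from the boundedness of $w\nu_p\ast w\nu_p$ established in Proposition \ref{convPowers}(c)) so the hypothesis of the lemma is satisfied.
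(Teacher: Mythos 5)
Your proposal is correct and is precisely the argument the paper has in mind: the paper labels the corollary a ``straightforward consequence of Lemma \ref{lowerBoundExp}'' and provides no further proof, and your computation (applying the lemma with $\Psi=\ab{\cdot}^p$, $s=1$, evaluating $\norma{f}_{L^2}^2=\frac{\pi}{p}2^{\frac12-\frac1p}\Gamma(\frac12+\frac1p)$ and $\int_{E_p}e^{-2\tau}\d\xi\d\tau=\frac{\pi}{p}2^{2-\frac4p}\Gamma(\frac2p)$, then taking the ratio) is exactly the intended verification. The bookkeeping of powers of $2$ and the Gamma arguments checks out, and your brief justification of the lemma's hypotheses via Proposition \ref{convPowers}(c) is appropriate.
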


\subsection{The pure quartic}\label{sec:PureQuartic}
In this section, we consider the  case $p=4$ of \eqref{onepossiblepureStrichartz4}. 
Let $\nu=\nu_4$ be given by \eqref{defnup}. 
The next result records the additional simplifications which appear in the integral formula \eqref{formulaPowerP} for the convolution $|\cdot|\nu\ast |\cdot|\nu$.

\begin{cor}
	\label{formulaConvQuartic}
	Let $\nu(y,t)=\ddirac{t-\ab{y}^4} \d y \d t$. Then the following 
	integral formula holds, for every $\xi\neq 0$ and $\la\geq \frac18$:
	\begin{multline}
	\label{formulaPureQuartic}
	(|\cdot|\nu\ast |\cdot|\nu)(\xi,\la\ab{\xi}^4)\\
	=\frac{1}{4\sqrt{2}}\int_0^{2\pi}\biggl(\frac{\la+\cos^2\te+2\cos^4\te-2
		(2\la+\cos^2\te+\cos^4\te)^{\frac12}\cos^2\te}{2\la+\cos^2\te+\cos^4\te}\biggr)^{\frac{1}{2}}\d\te.
	\end{multline}
	Additionally,
	$$ (|\cdot|\nu\ast |\cdot|\nu)(0,\tau)=\frac{\pi}{4}\one_{\{\tau>0\}}(\tau),
\text{ and }
 (|\cdot|\nu\ast |\cdot|\nu)(\xi,\tfrac{\ab{\xi}^4}{8})=\frac{\pi}{4\sqrt{3}},\text{ if } \xi\neq 0.$$
\end{cor}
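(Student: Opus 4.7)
The plan is to specialize the integral representation \eqref{formulaPowerP} from Proposition \ref{convPowers} to the case $p=4$, taking advantage of the algebraic simplifications that become available. The two boundary values follow directly from Proposition \ref{convPowers}: part (e) with $p=4$ gives $\pi/(p\sqrt{p-1})=\pi/(4\sqrt 3)$ for $\xi\neq 0$, while the formula $(|\cdot|\nu\ast|\cdot|\nu)(0,\tau)=(\pi/4)\mathbbm{1}_{\{\tau>0\}}(\tau)$ is the $p=4$ instance of the identity established along the $\tau$-axis in the course of the proof of part (f) of that proposition.

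The main work is to derive \eqref{formulaPureQuartic}. First I would substitute $p=4$ into \eqref{implicitVphi} and exploit the identity $(a+b)^2+(a-b)^2=2a^2+2b^2$ to obtain
$$\varphi_\theta(r)=2(r^2+1)^2+8r^2\cos^2\theta-2,$$
a quadratic polynomial in $r^2$. Writing $c:=\cos^2\theta$ and setting $\varphi_\theta(r)=2^p\lambda-2=16\lambda-2$ reduces the defining equation to $(r^2)^2+2(1+2c)r^2+(1-8\lambda)=0$, whose unique nonnegative root is
$$r^2=2\sqrt{2\lambda+c+c^2}-1-2c.$$
The key observation is the clean rearrangement $r^2+1+2c=2\sqrt{2\lambda+c+c^2}$, which is precisely what will control the denominator in \eqref{formulaPowerP}.

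Next, specializing \eqref{phiprime} to $p=4$ gives $\varphi'_\theta(r)=8r(r^2+1+2c)$, so that
$$\frac{r}{\varphi'_\theta(r)}=\frac{1}{8(r^2+1+2c)}=\frac{1}{16\sqrt{2\lambda+c+c^2}}.$$
For the remaining factor $((r^2+1)^2-4r^2\cos^2\theta)^{(p-2)/4}$, which at $p=4$ reduces to a square root, I would use the explicit expression for $r^2$ to simplify
$$(r^2+1)^2-4r^2\cos^2\theta=8\bigl(\lambda+c+2c^2-2c\sqrt{2\lambda+c+c^2}\bigr).$$
Inserting these two simplifications into \eqref{formulaPowerP} and absorbing $\sqrt 8/16=1/(4\sqrt 2)$ into the prefactor yields the claimed identity \eqref{formulaPureQuartic}.

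I do not expect any serious obstacles: the calculation is entirely explicit thanks to the quadratic nature of $\varphi_\theta$ when $p=4$. The one delicate point is the algebraic simplification of $(r^2+1)^2-4r^2\cos^2\theta$ after substituting the quadratic solution, where careful bookkeeping of the cross terms involving $\sqrt{2\lambda+c+c^2}$ is needed in order to recover the compact form inside the square root in \eqref{formulaPureQuartic}.
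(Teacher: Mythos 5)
Your proposal is correct and fills in precisely the computation the paper leaves implicit: the corollary is stated without proof as a specialization of Proposition \ref{convPowers}, and your derivation — substituting $p=4$ into \eqref{implicitVphi} and \eqref{phiprime}, reducing $\varphi_\theta$ to the quadratic $2r^4+(4+8\cos^2\theta)r^2$ in $r^2$, solving explicitly for $r^2=2\sqrt{2\lambda+c+c^2}-1-2c$, and using the identity $r^2+1+2c=2\sqrt{2\lambda+c+c^2}$ to collapse both the $r/\varphi_\theta'$ factor and the term $(r^2+1)^2-4r^2\cos^2\theta=8(\lambda+c+2c^2-2c\sqrt{2\lambda+c+c^2})$ — is exactly right, and the prefactor $\sqrt8/16=1/(4\sqrt2)$ recovers \eqref{formulaPureQuartic}. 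The two boundary values are indeed immediate from parts (e) and (f) of Proposition \ref{convPowers}.
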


\noindent We are now ready to prove Theorem \ref{extPureQuarticIntro}.

\begin{proof}[Proof of Theorem \ref{extPureQuarticIntro}]
	In view of \cite[Theorem 4.1]{JSS},  the existence of extremizers for inequality \eqref{hom_conv_Sobolev_Strichartz_Intro}  follows from the strict inequality
	$\mathcal Q^4>\frac{\pi}{4\sqrt{3}}.$
	In order to establish it,  consider the function $f(y)=\exp(-|y|^4){\ab{y}^{\frac 12}}$. Invoking Corollary \ref{lowerBoundPurePowers}, we have that
	\begin{equation}\label{num1}
	\mathcal Q^4
	\geq \frac{\norma{f|\cdot|^{\frac 12}\nu\ast 
			f|\cdot|^{\frac 12}\nu}_{L^2}^2}{\norma{f}_{L^2}^4}
	\geq\frac{\pi}{4\sqrt{2}}\frac{\Gamma\bigl(\frac{3}{4}\bigr)^2}
	{\Gamma\bigl(\frac{1}{2}\bigr)}
	=\frac{\sqrt{2\pi}}{8}\Gamma\Bigl(\frac{3}{4}\Bigr)^2
	>\frac{\pi}{4\sqrt{3}},
	\end{equation}
	as desired. 
	The upper bound $\mathcal Q^4\leq \frac{\pi}4$ holds in view of part (f) of Proposition \ref{convPowers} for $p=4$.
	That this upper bound	
	is strict follows from the existence of extremizers, and the fact that the pointwise inequality $|\cdot|\nu\ast |\cdot|\nu< \frac{\pi}4$ is strict almost everywhere, as quantified by \eqref{StrictOutsideVerticalAxis}.
\end{proof}

\begin{remark}
	A direct calculation shows that the function $f(y)=\exp(-\ab{y}^4){\ab{y}^{\frac 12}}$ satisfies
	\begin{equation}\label{numericalformula4}
	\frac{\norma{f |\cdot|^{\frac 12}\nu\ast 
			f|\cdot|^{\frac 12}\nu}_{L^2(\R^3)}^2}{\norma{f}_{L^2(\R^2)}^4}
	=\frac{2}{\sqrt{\pi}\Gamma(\frac{3}{4})^2}\int_{0}^{2\sqrt{2}}
	(|\cdot|\nu\ast |\cdot|\nu)^2(e_1,{t^{-2}}) \d t.
	\end{equation}
 Invoking formula \eqref{formulaPureQuartic} for the convolution $|\cdot|\nu\ast |\cdot|\nu$, the integral on the right-hand side of \eqref{numericalformula4} can be evaluated numerically. 
 With precision $5\times 10^{-6}$, one checks that
	\begin{equation}\label{NumValue}
	\frac{\norma{f|\cdot|^{\frac 12}\nu\ast f|\cdot|^{\frac 12}\nu}_{L^2(\R^3)}^2}{\norma{f}_{L^2(\R^2)}^4}\approx 
	0.489333. 
	\end{equation}
		Note that the lower bound $\frac{\sqrt{2\pi}}{8}\Gamma(\frac{3}{4})^2\approx 0.470508$ obtained in \eqref{num1} already amounts to about $96\%$ of the value in \eqref{NumValue}. 
		This indicates that the Cauchy--Schwarz argument from Lemma \ref{lowerBoundExp} is quite sharp for $p=4$. We expect the same argument to work for other values of $p$ as well, and remark on that in the next section.\\
\end{remark}

\subsection{Other pure powers}\label{sec:OtherPP}
In this section, we briefly comment on how to approach  the problem of existence of extremizers for inequality  \eqref{onepossiblepureStrichartz4} in the case of pure powers other than the quartic. 
Given  $p>2$,
let $\mathcal Q_p$ be the optimal constant in inequality \eqref{onepossiblepureStrichartz4}.
The following result provides a partial replacement for Theorem \ref{extPureQuarticIntro} \mbox{when $p\neq 4$.}
\begin{prop}
	\label{lowerBoundQp}
	There exists $p_0>5$
	such that, for every $p\in(2,p_0)$, 
	\[ \frac{\pi}{p\sqrt{p-1}}<\mathcal Q_p^4\leq \frac{\pi}{p}. \]
\end{prop}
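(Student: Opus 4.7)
The plan is to sandwich $\mathcal{Q}_p^4$ between two explicit quantities. The upper bound $\mathcal{Q}_p^4 \leq \pi/p$ is already contained in Remark \ref{boundsQp}: it follows from the Cauchy--Schwarz argument together with part (f) of Proposition \ref{convPowers}, which identifies $\norma{w\nu_p\ast w\nu_p}_{L^\infty(\R^3)} = \pi/p$. For the strict lower bound, I would invoke Corollary \ref{lowerBoundPurePowers} with the explicit trial function $f(y) = \exp(-|y|^p)|y|^{(p-2)/4}$, which yields
\[
\mathcal{Q}_p^4 \geq \frac{\pi}{p}\cdot 2^{\frac{2}{p}-1}\cdot \frac{\Gamma\bigl(\tfrac12+\tfrac1p\bigr)^2}{\Gamma\bigl(\tfrac{2}{p}\bigr)}.
\]
Dividing through by $\pi/(p\sqrt{p-1})$, the desired strict inequality reduces to
\[
F(p):=\sqrt{p-1}\cdot 2^{\frac{2}{p}-1}\cdot \frac{\Gamma\bigl(\tfrac12+\tfrac1p\bigr)^2}{\Gamma\bigl(\tfrac{2}{p}\bigr)} > 1,
\]
and the whole problem becomes analysis of this scalar function on an interval $(2,p_0)$ with $p_0 > 5$.

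Since $\Gamma(1) = 1$, one immediately checks that $F(2) = 1$, so the inequality is only marginal at the left endpoint. To see that $F$ opens upward there, I would logarithmically differentiate, obtaining
\[
\frac{F'(p)}{F(p)} = \frac{1}{2(p-1)} - \frac{2\log 2}{p^2} - \frac{2}{p^2}\Bigl[\Psi\bigl(\tfrac12+\tfrac1p\bigr)-\Psi\bigl(\tfrac2p\bigr)\Bigr],
\]
where $\Psi := (\log \Gamma)'$ denotes the digamma function. At $p=2$ both digamma arguments equal $1$, so the bracketed term vanishes and a short calculation gives $F'(2) = (1-\log 2)/2 > 0$. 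This already delivers $F(p) > 1$ on some right neighborhood $(2, 2+\eta)$ of $2$.

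The genuine obstacle will be to extend this strict inequality throughout a compact interval $[2+\eta,\,p_0]$ with $p_0>5$. The natural route is: first, compute $F(5)$ and verify numerically that it is strictly above $1$ (using $\Gamma(7/10)\approx 1.298$ and $\Gamma(2/5)\approx 2.218$ one finds $F(5)\approx 1.002$), which by continuity of $F$ on $[2,\infty)$ guarantees $F > 1$ on an interval of the form $[5,\,5+\eta']$; second, tabulate $F$ at finitely many sample points covering $[2+\eta,\,5]$ and use the logarithmic-derivative formula above to bound $|F'|$ uniformly, thereby controlling the oscillation of $F$ between samples. The delicate point is that $F$ is only marginally larger than $1$ near $p=5$, on the order of $10^{-3}$, so the grid must be sufficiently fine and the derivative bound correspondingly tight. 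Once $F > 1$ is established on all of $(2,p_0)$, the proposition follows at once.
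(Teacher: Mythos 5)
Your proposal reproduces the paper's argument exactly: the same upper bound from Proposition \ref{convPowers}(f), the same lower bound from Corollary \ref{lowerBoundPurePowers} with the trial function $e^{-|y|^p}|y|^{(p-2)/4}$, and the same reduction to the scalar inequality \eqref{ineqGammaP} (your $F(p)>1$ is just the ratio form of that inequality). The paper then settles the scalar inequality only by a numerical plot (Figure \ref{fig:Gamma}); your additional observations that $F(2)=1$ and $F'(2)=(1-\log 2)/2>0$, together with the proposed grid-plus-derivative-bound strategy, would, if carried out, upgrade the paper's graphical verification to a rigorous one, but the underlying route is the same.
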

\begin{proof}[Sketch of proof]
	The upper bound holds in view of part (f) of Proposition \ref{convPowers}.
	Invoking Corollary \ref{lowerBoundPurePowers} as before, we obtain the lower bound \eqref{boundGammaPP}.
	We are thus reduced to showing that
	\[ \frac{\pi}{p2^{1-\frac{2}{p}}}\frac{\Gamma\bigl(\frac{1}{2}+\frac{1}{p}\bigr)^2}
	{\Gamma\bigl(\frac{2}{p}\bigr)}>\frac{\pi}{p\sqrt{p-1}}, \]
	or equivalently
	\begin{equation}
	\label{ineqGammaP}
	\Gamma\Bigl(\frac{1}{2}+\frac{1}{p}\Bigr)^2
	>\frac{2^{1-\frac{2}{p}}}{\sqrt{p-1}}\Gamma\Bigl(\frac{2}{p}\Bigr).
	\end{equation}
	Figure \ref{fig:Gamma} below illustrates the validity of this inequality inside the claimed range.
	\end{proof}
	\begin{figure}[htbp]
  \centering
  \includegraphics[height=5cm]{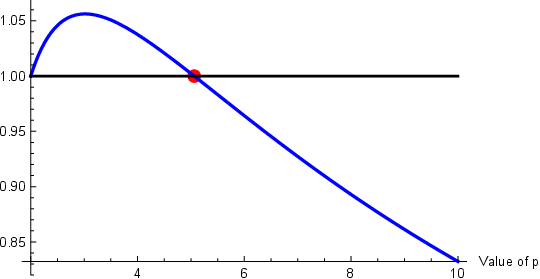} 
    \caption{Plot of the ratio $\frac{\text{LHS}}{\text{RHS}}$ of inequality \eqref{ineqGammaP}, for $2<p<10$. The $p$-coordinate of the intersection (red) point has been numerically determined and equals $5.061147$ (6 d.p.).}
\label{fig:Gamma}
\end{figure}
\noindent  Let $p_0$  be the exponent promised by Proposition \ref{lowerBoundQp}. 
For every $p\in (2,p_0)$,  
extremizing sequences for inequality \eqref{onepossiblepureStrichartz4} are seen not to
concentrate at a point of the boundary, except possibly at the origin.
Then, arguments similar to the ones from \cite{CS, JSS}
can presumably establish the 
existence of extremizers, provided that a ``cap bound'' holds, together with a principle quantifying the weak interaction between distant 
caps, in the spirit of Lemma \ref{WeakInteraction}.
 As a final remark, we record the following generalization of formula \eqref{numericalformula4} for generic values of $p>2$,
\begin{align*}
\frac{\norma{f\sqrt{w}\nu_p\ast f\sqrt{w}\nu_p}_{L^2(\R^3)}^2}{\norma{f}_{L^2(\R^2)}^4}
=\frac{p}{2\pi}\frac{\Gamma\bigl(\frac{2}{p}\bigr)}{\Gamma\bigl(\frac{1}{2}+\frac{1}{p}\bigr)^2}
\int_{0}^{2^{2-\frac 2p}} (w\nu_p\ast w\nu_p)^2(e_1,{t^{-\frac p2}})\d t,
\end{align*}
which could be of interest for further numerical explorations.

\section*{Acknowledgments} 
The software {\it Mathematica} and the open software packages \textit{Maxima} and \textit{Scilab} were  used to perform the numerical tasks described in Chapter \ref{sec:Strichartz}. The beginning of this work was accomplished during an extended research visit of the second author to the Hausdorff Institute for Mathematics, whose hospitality is greatly appreciated. We thank Mateus Sousa for  reading a preliminary version of this manuscript, and Stefan Steinerberger and Christoph Thiele for various comments and suggestions. Finally, we are indebted to the anonymous referee for pointing out several related questions in the existing literature.


\begin{thebibliography}{03}

\bibitem{BM} Jong-Guk Bak and David McMichael, 
\newblock {\it Convolution of a measure with itself and a restriction theorem.}
\newblock Proc. Amer. Math. Soc. {\bf 125} (1997), no.~2, 463--470. 

\bibitem{BKS}
Matania Ben-Artzi, Herbert Koch and Jean-Claude Saut, 
\newblock {\it Dispersion estimates for fourth order Schr\"odinger equations.} 
\newblock C. R. Acad. Sci. Paris S�r. I Math. {\bf 330} (2000), no.~2, 87--92. 

\bibitem{BBCH} Jonathan Bennett, Neal Bez, Anthony Carbery and Dirk Hundertmark, 
\newblock {\it Heat-flow monotonicity of Strichartz norms.} 
\newblock Anal. PDE {\bf 2} (2009), no.~2, 147--158. 

\bibitem{BS} Neal Bez and Mitsuru Sugimoto,
\newblock {\it Optimal constants and extremisers for some smoothing estimates.}
\newblock Preprint, 2012. arXiv:1206.5110.

\bibitem{C} Emanuel Carneiro, 
\newblock{\it A sharp inequality for the Strichartz norm.}  
\newblock Int. Math. Res. Not. IMRN 2009, no.~16, 3127--3145. 

\bibitem{COS} Emanuel Carneiro and Diogo Oliveira e Silva, 
\newblock{\it Some sharp restriction inequalities on the sphere.} 
\newblock Int. Math. Res. Not. IMRN 2015, no.~17, 8233--8267. 

\bibitem{CS} Michael Christ and Shuanglin Shao, 
\newblock{\it Existence of extremals for a Fourier restriction inequality.}
\newblock Anal. PDE {\bf 5} (2012), no.~2, 261--312. 

\bibitem{CS2} Michael Christ and Shuanglin Shao, 
\newblock{\it On the extremizers of an adjoint Fourier restriction inequality.}
\newblock Adv. Math. {\bf 230} (2012), no.~3, 957--977. 

\bibitem{FVV} Luca Fanelli, Luis Vega and Nicola Visciglia, 
\newblock{\it On the existence of maximizers for a family of restriction theorems.}  
\newblock Bull. Lond. Math. Soc. {\bf 43} (2011), no.~4, 811--817. 

\bibitem{FVV2} Luca Fanelli, Luis Vega and Nicola Visciglia,
\newblock{\it Existence of maximizers for Sobolev--Strichartz inequalities.}
\newblock Adv. Math. {\bf 229} (2012), no.~3, 1912--1923. 

\bibitem{F} Damiano Foschi,
\newblock {\it Maximizers for the Strichartz inequality.} 
\newblock J. Eur. Math. Soc. (JEMS) {\bf 9} (2007), no.~4, 739--774. 

\bibitem{F2} Damiano Foschi, 
\newblock {\it Global maximizers for the sphere adjoint Fourier restriction inequality.} 
\newblock J. Funct. Anal. {\bf 268} (2015), no.~3, 690--702. 

\bibitem{FOS} Damiano Foschi and Diogo Oliveira e Silva, 
\newblock {\it Some recent progress in sharp Fourier restriction theory.}
Preprint, 2016.

\bibitem{FLS} Rupert Frank, Elliott H. Lieb and Julien Sabin,
\newblock {\it Maximizers for the Stein--Tomas inequality.}
\newblock Preprint, 2016. arXiv:1603.07658.
To appear in Geometric and Functional Analysis.

\bibitem{Han} Wei Han, 
\newblock {\it The sharp Strichartz and Sobolev--Strichartz inequalities for the fourth order 
Schr\"odinger equation.} 
\newblock Math. Meth. Appl. Sci. 2015, {\bf 38}, 1506--1514. 

\bibitem{HUL} Jean-Baptiste Hiriart-Urruty and Claude Lemar\'echal, 
\newblock {\it Fundamentals of convex analysis.} 
\newblock Grundlehren Text Editions. Springer-Verlag, Berlin, 2001.

\bibitem{HZ} Dirk Hundertmark and Vadim Zharnitsky, 
\newblock {\it On sharp Strichartz inequalities in low dimensions.} 
\newblock Int. Math. Res. Not. 2006, Art. ID 34080, 18 pp. 

\bibitem{JPS} Jin-Cheng Jiang, Benoit Pausader and Shuanglin Shao, 
\newblock {\it The linear profile decomposition for the fourth order Schr\"odinger equation.}
\newblock J. Differential Equations {\bf 249} (2010), no.~10, 2521--2547. 

\bibitem{JSS} Jin-Cheng Jiang, Shuanglin Shao and Betsy Stovall,
\newblock {\it Linear profile decompositions for a family of fourth order Schr\"odinger equations.}
\newblock Preprint, 2014. 	arXiv:1410.7520.

\bibitem{KT} Markus Keel and Terence Tao,
\newblock {\it Endpoint Strichartz estimates.}
\newblock Amer. J. Math. {\bf 120} (1998), no. 5, 955--980. 

\bibitem{KPV} Carlos Kenig, Gustavo Ponce and Luis Vega,
\newblock {\it Oscillatory integrals and regularity of dispersive equations.} 
\newblock Indiana Univ. Math. J. {\bf 40} (1991), no.~1, 33--69. 

\bibitem{Li} Pierre-Louis Lions,
\newblock {\it The concentration-compactness principle in the calculus of variations. The locally compact case. I.}
\newblock Ann. Inst. H. Poincar\'e Anal. Non Lin\'eaire, {\bf 1} (1984), no.~2, 109--145.

\bibitem{OS2} Diogo Oliveira e Silva,
\newblock {\it Extremizers for Fourier restriction inequalities: convex arcs.}
\newblock J. Anal. Math. {\bf 124} (2014), 337--385. 

\bibitem{OS} Diogo Oliveira e Silva, 
\newblock {\it Nonexistence of extremizers for certain convex curves.}
\newblock Preprint, 2012. arXiv:1210.0585. To appear in Mathematical Research Letters. 

\bibitem{Pa} Benoit Pausader, 
\newblock {\it Global well-posedness for energy critical fourth-order Schr\"odinger equations in the radial case.}
\newblock Dyn. Partial Differ. Equ. {\bf 4} (2007), no. 3, 197--225. 

\bibitem{Q} Ren\'e Quilodr\'an, 
\newblock {\it Nonexistence of extremals for the adjoint restriction inequality on the hyperboloid.} 
\newblock J. Anal. Math. {\bf 125} (2015), 37--70. 

\bibitem{R} Javier Ramos,
\newblock {\it A refinement of the Strichartz inequality for the wave equation with applications.} 
\newblock Adv. Math. {\bf 230} (2012), no.~2, 649--698. 

\bibitem{RS} Michael Ruzhansky and Mitsuru Sugimoto,
\newblock {\it Smoothing properties of evolution equations via canonical transforms and comparison principle.} 
\newblock Proc. Lond. Math. Soc. (3) {\bf 105} (2012), no.~2, 393--423. 

\bibitem{S} Elias M. Stein,
\newblock {\it Harmonic Analysis: Real-Variable Methods, Orthogonality, and Oscillatory Integrals,} 
\newblock Princeton Univ. Press, Princeton, NJ, 1993.

\bibitem{St} Robert S. Strichartz, 
\newblock {\it Restrictions of Fourier transforms to quadratic surfaces and decay of solutions of wave equations.} 
\newblock Duke Math. J. {\bf 44} (1977), no.~3, 705--714. 

\bibitem{T} Peter A. Tomas, 
\newblock {\it A restriction theorem for the Fourier transform.} 
\newblock Bull. Amer. Math. Soc. {\bf 81} (1975), 477--478. 

\end{thebibliography}
\end{document}